\documentclass[12pt,reqno]{amsart}
\usepackage[margin=1.37in]{geometry}

\usepackage[english]{babel}
\usepackage{amsmath}
\usepackage{amssymb}
\usepackage{amsthm}
\usepackage{graphicx,color} 
\usepackage[dvipsnames]{xcolor}
\usepackage{bm,bbm}
\usepackage{graphicx} 
\graphicspath{{pics/}} 

\usepackage[normalem]{ulem}
\usepackage{subfiles} 
\usepackage{enumitem}

\usepackage{soul}
\setstcolor{red}

\usepackage{hyperref}



\definecolor{lgray}{gray}{0.9}
\definecolor{llgray}{gray}{0.95}
\definecolor{lllgray}{gray}{0.975}

\newcommand{\black}{\color{black}}
\newcommand{\blue}{}
\date{}




\newcommand{\bk}{{\bm k}}

\newcommand{\br}{{\bm r}}

 \newcommand{\cR}{{\mathcal R}}

 \renewcommand{\restriction}{|}

\newcommand{\Enorm}[1]{\| #1\|_r}

\newcommand{\EnormZ}[1]{\| #1\|_{0}}


\newcommand{\sX}{{\mathsf X}}

\newcommand{\XM}{{\mathsf X}_M}
\newcommand{\XN}{{\mathsf X}_N}

\newcommand{\per}{{\rm per}}

\newcommand{\Lt}{{\sf L^2_\per}}

\newcommand{\Hss}[1]{{\sf H}^{#1}_\per}
\newcommand{\Hs}[1]{{\sf H}^{#1}_\per}


\newcommand{\C}{\mathbb C}
\newcommand{\N}{\mathbb{N}}

\newcommand{\Z}{\mathbb Z}

\newcommand{\calH}{{\mathcal{H}}}



\newcommand{\R}{\mathbb R}
\newcommand{\cH}{\mathcal{H}}

\newcommand{\nc}{\newcommand}
\nc{\e}{\epsilon}
\nc{\be}{\beta}
\nc{\del}{\delta}
\nc{\G}{\Gamma}
\nc{\g}{\gamma}
\nc{\gam}{\gamma}
\nc{\ka}{\kappa}
\nc{\lam}{\lambda}
\nc{\Lam}{\Lambda}
\nc{\Om}{\Omega}
\nc{\om}{\omega}
\nc{\ta}{\tau}
\nc{\w}{\omega}
\nc{\io}{\iota}
\nc{\h}{\theta}
\nc{\z}{\zeta}
\nc{\si}{\sigma}
\nc\vphi{{\varphi}}
\nc\eps{\epsilon}

\newcommand{\lan}{\langle}
\newcommand{\ran}{\rangle}

\newcommand{\id}{{\blue{\rm Id}}}
\newcommand{\one}{\id}

\newcommand{\Ran}{{\rm{Ran\, }}}


\newcommand{\re}{\operatorname{Re}}


\newcommand{\ls}{\lesssim}

\newcommand{\ra}{\rightarrow}

\newcommand{\VERSIONS}[1]{}


\renewcommand{\Pr}{{\sf P}}
\newcommand{\PN}{\Pr_{\!N}}
\newcommand{\PNp}{\Pr_{\!N}^\perp}
\newcommand{\PM}{\Pr_{\!M}}
\newcommand{\PMp}{\Pr_{\!M}^\perp}
\newcommand{\PNM}{\Pr_{\!M}^{\!N}}

\newcommand{\HMp}{\mathcal{H}_{\! M}^\perp}

\newcommand{\Hcal}{\mathcal H}

\newcommand{\HM}{\mathcal{H}_{\! M}}
\newcommand{\HMN}{\mathcal{H}_{\! M}^N}
\newcommand{\HMl}{\mathcal{H}_{\! M}(\lax)}

\newcommand{\Hsi}{\mathcal{H}_{\! \sigma}}

\newcommand{\Gza}{(H_{0}+\alpha)}
\newcommand{\Hza}{H_{0,\alpha}}
\newcommand{\Hzl}{H_{0,-\lax}}

\newcommand{\VMN}{V_{\! M}^N}

\newcommand{\UM}{U_{\! M}}
\newcommand{\UMl}{U_{\! M}(\lax)}

\newcommand{\Us}{U_{\! \sigma}}
\newcommand{\Usl}{U_{\! \sigma}(\lax)}
\newcommand{\UMN}{U_{\! MN}}
\newcommand{\UMNl}{U_{\! MN}(\lax)}

\newcommand{\QM}{{\sf Q}_{\!M}}

\newcommand{\GNM}{{\sf G}_{\!M}^N}

\newcommand{\Pz}{P_0}
\newcommand{\oP}{\Pz^\perp}

\newcommand{\nuN}{\nu_N}
\newcommand{\nuMi}{\nu_{M i}}

\newcommand{\nusi}{\nu_{\sigma i}}

\newcommand{\las}{\lambda_\star}
\newcommand{\lac}{\lambda_{\circ}}
\newcommand{\lat}{\lambda_{\sf t}}

\newcommand{\laxs}{\lambda_{\sigma}}
\newcommand{\laxsi}{\lambda_{\sigma i}}

\newcommand{\la}{{\lambda_\star}}

\newcommand{\laN}{\rho_N}
\newcommand{\laM}{\rho_M}

\newcommand{\lax}{{\lambda}}

\newcommand{\I}{I_0}
\newcommand{\aaa}{\alpha}
\newcommand{\bb}{\alpha}

\newtheorem{remark}{Remark}
\newtheorem{prop}{Proposition}
\newtheorem{lem}{Lemma}
\newtheorem{thm}{Theorem}
\newtheorem{cor}[thm]{Corollary}
\newtheorem{assumption}{Assumption}

\newcommand{\new}[1]{\blue {#1} \black }
\newcommand{\old}[1]{}



\title[The FSM and computation of eigenvalues. Main result]{\new{Analysis of the Feshbach--Schur method for the Fourier Spectral discretizations of Schr\"odinger operators}}


\author{Genevi\`eve Dusson}
\address{Laboratoire de Math\'ematiques de Besan\c{c}on, UMR CNRS 6623, Universit\'e Bourgogne Franche-Comt\'e,  16 route de Gray, 25030 Besan\c{c}on, France}
\email{genevieve.dusson@math.cnrs.fr}

\author{Israel Michael Sigal}
\address{Department of Mathematics,
University of Toronto,
40 St. George Street,  Bahen Centre, 
Toronto, ON M5S 2E4, 
Canada}
\email{im.sigal@utoronto.ca}

\author{Benjamin Stamm}
\address{Applied and Computational Mathematics, Department of Mathematics, RWTH Aachen University, Schinkelstr. 2, 52062 Aachen, Germany}
\email{best@acom.rwth-aachen.de}

\begin{document}

\begin{abstract}
    In this article, we propose a new numerical method and its analysis to solve eigenvalue problems for self-adjoint Schr\"odinger operators, by combining the Feshbach--Schur perturbation theory with \old{planewave}\new{the spectral Fourier} discretization. 
    In order to analyze the method, we establish an abstract framework of  Feshbach--Schur perturbation theory with minimal regularity assumptions on the potential that is then applied to the setting of the new \old{planewave}\new{spectral Fourier} discretization method.
    Finally, we present some numerical results that underline the theoretical findings.
\end{abstract}

\maketitle

\section{Introduction}

\new{
In this article, we address the problem of the computation of eigenvalues of  self-adjoint Schr\"odinger operators 
(quantum Hamiltonians) of the form 
\[
    \Hcal=-\Delta +V.
\]
Our main results are a priori error estimates for the approximation error of the eigenvalue and eigenfunction for a new method allowing irregular potentials beyond the regularity assumptions within the standard variational setting~\cite{babuvska1989finite,Babuska1991-cg,chatelin2011spectral}. 
The main ingredient of this method and its analysis is a reduction of this infinite-dimensional problem to a finite-dimensional one in a fully controlled way with an effective estimate of the error terms, using the Feshbach--Schur map (FSM) method. This method originated in works of I. Schur on the Dirichlet problem in planar domains and H. Feshbach, on  resonances in nuclear physics, and was then developed independently in numerical analysis, computational quantum chemistry and mathematical physics, see \cite{Griesemer2008-uw,Gustafson2011-wd} with the original techniques called variously the Feshbach projection and Schur complements methods. 
}

We combine this approach with 
\new{spectral Fourier} discretizations which are widely used in numerical methods in electronic structure calculation,
especially for condensed matter simulations and in materials science, \new{and known in this community as planewave discretization.}
Electronic structure calculation is indeed one of the problems we have in our sight.
And one particular very useful aspect of planewaves is that they are eigenfunctions of the Laplace operator,  \new{entering} the Hamiltonian $\Hcal=-\Delta +V$ that needs to be diagonalized in order to determine the electronic structure of the system.

\new{
Our analysis is thus relying on non-variational perturbative techniques. 
Perturbation-based approaches have a long history in quantum mechanics.
}
For example, different perturbation methods have been proposed, such as~\cite{Brust1964-hw, Moller1934-iz}, traditionally to introduce more physical details e.g. many-particle interactions in a given approximation. 
\new{
The mathematical justification has been provided by the seminal work of Kato~\cite{Kato1976-hm}.
Perturbation methods have also been used to study van der Waals interactions between two hydrogen atoms in the dissociation limit in a mathematically rigorous way in
\cite{cances2020van,cances2018van}.} 
More recently, a post-processing strategy has been proposed by some of the authors for planewave discretizations for non-linear eigenvalue problems~\cite{Cances2014-lb,Cances2016-vy,Cances_undated-fd,Dusson2017-jg}, which considers the exact solution as a perturbation of the discrete (using the planewave basis) approximation.

This is in spirit not so far from so-called two-grid methods, where a first problem is solved on a coarse basis, i.e. in a small discretization space, and a small problem is solved on a fine basis. \new{In the case of eigenvalue problems using a same Hamiltonian $\Hcal=-\Delta +V$ as in this article, two-grid and three-grid methods have been proposed e.g. in \cite{dai2008three,Xu1999-vo} within a variational approximation. }
A two-grid method has also been proposed for nonlinear eigenvalue problems of a Gross--Pitaevskii type equation in ~\cite{Cances2018-ow}.

\new{As emphasized above, we extend in this article} the FSM-method to establish finite-dimensional approximations \new{based on the spectral Fourier basis} to solve the Schr\"odinger eigenvalue problem with controlled errors on the eigenvalues and eigenvectors. 
To be a little more concrete, we define a new problem in a coarse, \new{finite-dimensional, subspace} $\XM\subset \sX$ \new{spanned by Fourier modes} yielding the exact eigenvalue one would obtain when computing it in the infinite-dimensional space~$\sX$.
Indeed, our contribution follows a new Ansatz based on the question: Can we find a discrete Hamiltonian acting on the finite-dimensional space $\XM$ that has the exact eigenvalue $\las$ of the original Hamiltonian (acting on $\sX$) as eigenvalue? It turns that the answer is yes, but that the discrete Hamiltonian depends itself on $\las$, through the \new{Feshbach--Schur (FS)}-map, leading to an eigenvalue problem in~$\XM$ that is nonlinear in the spectral parameter. 
Not surprisingly, the map cannot be computed exactly but only be approximated through a fast decaying series, that is truncated based on a parameter $K$, and which requires computations in a larger space $\XN$ with $\XM\subset \XN \subset \sX$. 
\new{However, this defines a new numerical method that is defined by the three parameters $\sigma=(N,M,K)$ and that can be rigorously analysed.}
In this work we quantify the error introduced due to the discretization parameters \old{$K,N,M$}\new{$\sigma=(N,M,K)$.}
\old{and show that the eigenvalue and eigenfunction errors are bounded by two terms: i) a term with algebraic decay with respect to the truncation parameter $K$, and ii) a term with a regularity-dependent convergence rate in $N$.
We also quantify the explicit dependency of the error in terms of the parameter $M$ defining the discrete space $\XM$ and \new{the regularity of } the potential.\old{, including its regularity.}
Our analysis  reveals that the algebraic decay rate with respect to $K$ increases with increasing~$M$.}

\old{
Our method uses an adapted version of perturbation theory based on a slightly more regular notion of relative form-boundedness, as stated in Assumption~\ref{as:pot}, developed as an abstract theory in Section~\ref{sec:pert-est}, which thus only requires little regularity of the potential including cases which are not covered by the standard analysis of \old{planewave}\new{spectral Fourier} discretizations.
We also illustrate our approach by computing eigenvalues of several 1D Schr\"odinger operators.
}

This article is organized as follows. In Section~\ref{sec:sec2} we present the problem and numerical method that is used to find approximations thereof, as well as the main approximation result of the article and the error bounds on the eigenvalues.
Section~\ref{sec:pert-est} provides the above-mentioned abstract framework of Feshbach--Schur perturbation theory based on the regularized version of form-boundedness whereas Section~\ref{sec:prelim-res} contains some technical results needed to prove the main result which follows in Section~\ref{sec:main-proof}. 
Finally, we present in Section~\ref{sec:NumRes} some numerical results to illustrate the convergence as well as the error bounds, and we conclude with some perspectives in Section~\ref{sec:sec6}.

\section{Set-up and results}
\label{sec:sec2}
\subsection{Problem statement}\label{sec:probl}
 In order to simplify the notation, we consider a cubic lattice $\cR=L\Z^d$ ($L > 0$, $d=1,2,3$), 
 but all our arguments straightforwardly apply to the general case of any Bravais lattice. 
In this paper we are interested in the spectral theory of the self-adjoint Schr\"odinger operators
(quantum Hamiltonians) 
\[
	\Hcal := -\Delta + V,
\]
with reasonably regular, $\cR$-periodic potentials $V$, acting on 
 the Hilbert  space 
\begin{align*}\label{L2per}	 \Lt
	&:=\left\{ u \in L^2_{\rm loc}(\R^d) \; \middle| \; u \mbox{ is $\cR$-periodic} \right\},
\end{align*}
endowed with the scalar product
$ 
\langle u,v\rangle = \int_\Omega u(\br) \,
v(\br) \, d\br$ and the induced norm $\|\cdot\|$, where  $\Omega=[0,L)^d$ is the chosen fundamental cell of the lattice $\cR=L\Z^d$.
\new{Note that $V$ acts as a multiplicative operator, whereas $V$ is either a function (in the more regular case) or a distribution (in the less regular case), whose regularity will be discussed shortly.}

Specifically, we would like to solve the eigenvalue problem
\begin{equation}
	\label{eq:EVP}
	\Hcal\varphi = \lam\varphi,
\end{equation}
in a space $\sX\subset \Hs{1}$. Here, $\Hs{1}$ is the  Sobolev space of index 1 of periodic functions on $\Omega$, which is defined in precise terms later on and equation~\eqref{eq:EVP} is considered in the weak sense.

 To this end we use the Feshbach--Schur method to reduce the problem to a finite dimensional one.
To simplify the exposition, we will assume that the eigenvalue of interest $\la$ is isolated, which is true for the smallest eigenvalue under fairly general assumptions of $V$, \new{see~Theorems XIII.46 - XIII.48 of the textbook \cite{RSIV}.}
We denote by $\|\cdot\|$ the operator norm on ${\mathcal L}(\Lt)$, the space of bounded linear operators on $\Lt$.
To formulate our condition on the potential $V$, we introduce the following norm measuring its regularity
\[
    \Enorm{V} := \|(-\Delta+\id)^{-1/2+r/2} V (-\Delta+\id)^{-1/2+r/2}\|,
\]
{where the operator $(-\Delta+\id)^{s}$ is defined by the Fourier transform (cf. Appendix \ref{sec:tech-res}). In what follows, we thus assume that the potential $V$ satisfies the following condition.}

\begin{assumption}
   \label{as:pot}
    The potential $V$ is real, $\cR$-periodic and satisfies  
    \[
        \Enorm{V}  < \infty \quad \text{for some} \quad r > 0. 
    \]
\end{assumption}
 Assumption \ref{as:pot}  implies that $V$ is $\Delta$-form bounded~\cite{CFKS,RSII}, which corresponds to $r=0$. The latter, weaker property implies that $\Hcal$ (a) is self-adjoint; (b) is bounded below and (c) has purely discrete spectrum (see e.g. \cite{CFKS,RSII,RSIV, HS}). 
Moreover, potentials $V$ belonging to the Sobolev spaces,  $\Hs{s}:=(-\Delta+1)^{s/2}\Lt$ satisfy this assumption as shown in Appendix~\ref{sec:tech-res}, Lemma~\ref{lem:Hs-bnd} for $r \le s+1$ and $r<1+\frac{s}{2}-\frac{d}{4}$.
In terms of  Sobolev spaces, Assumption~\ref{as:pot} states that $V$, as an operator, maps $\Hs{1-r}$ into $\Hs{-1+r}$.

\new{
As an example, we note that the Coulomb potential $V(x)=\frac{Z}{|x|}$  in three dimensions ($d=3$) satisfies $V\in \Hs{s}$, $s<1/2$. 
Thus, in view of the above discussion we conclude that 
 Assumption \ref{as:pot} is satisfied for any $r< 1/2$.
 For potentials $V\in\Lt$, thus with $s=0$, the above discussion yields that Assumption \ref{as:pot} is satisfied for any $r<1-\frac{d}{4}$.
}
 
\subsection{Approach} 
In our approach, we reduce the exact infinite dimensional eigenvalue problem to a finite dimensional one in a controlled way for fairly irregular potentials. Of course, we have to pay a price for this, which is that at one point we solve a one-dimensional fixed point problem
that can be equivalently seen as a non-linear eigenvalue problem.
A key ingredient of our method is the finite dimensional space and the corresponding orthogonal projection onto which we map the original problem to obtain a reduced, finite-dimensional one.

Let  $\XM$ \new{denote the subspace of $\Lt$ spanned by the eigenfunctions of $-\Delta$ on $\cR$, with eigenvalues smaller than $\laM$, as}
\new{\begin{align}
	\label{XM}
    \XM = \left\{ \sum_{k\in \mathcal R^*, |k| \le M} \hat u_k e_k(x) \;\middle|\; \hat u_k^* = \hat u_{-k}, \; \hat u_k 
    \in \mathbb C \right\},
    \end{align}
}
\new{where $e_k(x) = |\Omega|^{-\frac12} e^{ik\cdot x}$} (\new{Fourier modes, also called } planewaves),   $ \mathcal R^* = \frac{2\pi}{L}\mathbb Z^d,$ and 
\[
   \laM:=\left(\frac{2\pi M}{L}\right)^2.
\]

Let $\PM$ be the $\Lt$-orthogonal projection onto $\XM$ and $\PMp:=\id-\PM$. 
We consider the Galerkin approximation 
of the linear Hamiltonian
$	\Hcal := -\Delta + V,$
\[
	\HM := \PM(-\Delta +  V) \PM.
\]
\new{
Let $\varphi$ denote an eigenfunction of \eqref{eq:EVP},} 
\old{We now}
introduce the projections $\varphi_M = \PM \varphi$ and $\varphi_M^\perp =\PMp\varphi$ and project 
the exact eigenvalue problem \eqref{eq:EVP}
 onto the subspace $\XM$ and its complement $\XM^\perp$ to obtain 
\begin{align}
	\label{eq:Seq1}
	\PM( \HM - \lam) \varphi_M &= -\PM V \varphi_M^\perp, \\
	\label{eq:Seq2}
	\PM^\perp(\HMp- \lam)\varphi_M^\perp &= -\PMp V\varphi_M,
\end{align}
where  $\HMp := \PMp \Hcal\PMp$. 
\new{Here and in the remainder of this article, we abuse notation and write $\lambda$ instead of $\lambda\id$ in order to denote the multiplicative operator.}
Next, in Appendix~\ref{sec:tech-res}, we prove the following 
\begin{lem}
\label{lem:Hperp-low-bnd}
Let Assumption~\ref{as:pot} hold  and define $\ka_M:=\laM - (\laM+1)  \,  \laM^{-{r}}\Enorm{V}$.
Then
\begin{align}
    \label{HNlowbnd}
  \HMp 
    &\ge \ka_M \ \text{ on }\ \Ran \PMp.
\end{align}
\end{lem}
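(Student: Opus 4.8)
The plan is to reduce the operator inequality \eqref{HNlowbnd} to a lower bound on the quadratic form of $\HMp$ on $\Ran\PMp$. Since $\PMp$ commutes with $-\Delta$ and acts as the identity on $\Ran\PMp$, for every $u\in\Ran\PMp$ in the form domain one has $\langle u,\HMp u\rangle=\langle u,-\Delta u\rangle+\langle u,Vu\rangle$. The kinetic part is immediate: $\Ran\PMp$ is spanned by the planewaves whose $(-\Delta)$-eigenvalue is $\ge\laM$, so the restriction of $-\Delta$ to $\Ran\PMp$ has spectrum in $[\laM,\infty)$, and more generally $\langle u,f(-\Delta)u\rangle$ is read off from the Fourier coefficients of $u$. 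It then remains to bound $\langle u,Vu\rangle$ from below by a small multiple of $\langle u,(-\Delta+1)u\rangle$.

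For the potential term I would use Assumption~\ref{as:pot} directly. Setting $A:=(-\Delta+1)^{-1/2+r/2}$, we have $V=A^{-1}(AVA)A^{-1}$ on a form core (trigonometric polynomials), so
\[
 |\langle u,Vu\rangle|=|\langle A^{-1}u,(AVA)A^{-1}u\rangle|\le\|AVA\|\,\|A^{-1}u\|^2=\Enorm{V}\,\big\langle u,(-\Delta+1)^{1-r}u\big\rangle .
\]
Now comes the step specific to $\Ran\PMp$: since $u\in\Ran\PMp$ only involves $(-\Delta)$-eigenvalues $t\ge\laM$ and $t\mapsto(t+1)^{-r}$ is decreasing, $(t+1)^{1-r}=(t+1)(t+1)^{-r}\le\laM^{-r}(t+1)$ on the relevant spectrum, whence
\[
 \big\langle u,(-\Delta+1)^{1-r}u\big\rangle\le\laM^{-r}\big\langle u,(-\Delta+1)u\big\rangle=\laM^{-r}\big(\langle u,-\Delta u\rangle+\|u\|^2\big).
\]

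Combining the two displays gives, for every $u\in\Ran\PMp$,
\[
 \langle u,\HMp u\rangle\ge\big(1-\laM^{-r}\Enorm{V}\big)\langle u,-\Delta u\rangle-\laM^{-r}\Enorm{V}\,\|u\|^2 .
\]
In the regime where $\ka_M$ is meaningful, i.e. $\laM^{r}\ge\Enorm{V}$ (equivalently $M$ large enough), the coefficient of $\langle u,-\Delta u\rangle$ is nonnegative, so a second application of $\langle u,-\Delta u\rangle\ge\laM\|u\|^2$ yields
\[
 \langle u,\HMp u\rangle\ge\big[(1-\laM^{-r}\Enorm{V})\laM-\laM^{-r}\Enorm{V}\big]\|u\|^2=\big(\laM-(\laM+1)\laM^{-r}\Enorm{V}\big)\|u\|^2=\ka_M\|u\|^2 ,
\]
which is the claim. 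The computation itself is pure bookkeeping of constants; the only two points needing care are the factoring identity $V=A^{-1}(AVA)A^{-1}$, which must be justified on a form core and extended by density (note $A^{-1}=(-\Delta+1)^{1/2-r/2}$ is unbounded, but on $\Ran\PMp$ all the operators involved act diagonally in the planewave basis, so this is harmless), and the monotonicity estimate $(t+1)^{1-r}\le\laM^{-r}(t+1)$ on $[\laM,\infty)$. I expect the factoring/quadratic-form justification to be the main, though minor, obstacle; the nonnegativity of $1-\laM^{-r}\Enorm{V}$ is exactly the condition under which $\ka_M$ is positive and thus the only regime in which the bound is subsequently used.
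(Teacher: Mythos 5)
Your argument is correct in substance and is essentially the paper's argument with the spectral shift frozen at the value $1$: the paper also combines the bound on $V$ coming from Assumption~\ref{as:pot} (applied to $\PMp V\PMp$, measured relative to a shifted Laplacian) with the spectral inequality $-\Delta\ge\laM$ on $\Ran\PMp$; your quadratic-form manipulation is exactly the factorization $-\Delta+\PMp V\PMp=(-\Delta+\alpha)^{1/2}\bigl[\one+(-\Delta+\alpha)^{-1/2}\PMp V\PMp(-\Delta+\alpha)^{-1/2}\bigr](-\Delta+\alpha)^{1/2}-\alpha$ used in the appendix, specialized to $\alpha=1$. The one substantive difference is the restriction you impose, $\laM^{r}\ge\Enorm{V}$: the lemma is stated without any such condition, so strictly speaking your proof establishes a conditional version of it. The reason you need the condition is that with the shift fixed at $1$ the coefficient $1-\laM^{-r}\Enorm{V}$ in front of $\langle u,-\Delta u\rangle$ must be nonnegative before you may insert $\langle u,-\Delta u\rangle\ge\laM\|u\|^2$; the paper keeps $\alpha$ free precisely to arrange the smallness $\|(-\Delta+\alpha)^{-1/2}\PMp V\PMp(-\Delta+\alpha)^{-1/2}\|<1$ by taking $\alpha$ large, after which the $\alpha$-dependence cancels and the bound $\ka_M$ comes out uniformly in $\alpha$, with no hypothesis on $M$.

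Two mitigating remarks. First, wherever the lemma is actually invoked the paper additionally assumes $4\laM^{-r}\Enorm{V}\le\tfrac13$ (and needs $\lambda<\ka_M$ with $\ka_M$ not too negative), so your extra hypothesis is automatically satisfied in the regime of interest. Second, the paper's route to the unconditional statement leans on the estimate $\|(-\Delta+\alpha)^{-1/2}\PMp V\PMp(-\Delta+\alpha)^{-1/2}\|\le\frac{\laM+1}{\laM+\alpha}\laM^{-r}\Enorm{V}$ for large $\alpha$, and with the grouping used there the first factor is $\sup_{t\ge\laM}\frac{t+1}{t+\alpha}$, which equals $\frac{\laM+1}{\laM+\alpha}$ only for $\alpha\le1$ (for $\alpha>1$ it tends to $1$, and for $r<1$ the sup of $\frac{(t+1)^{1-r}}{t+\alpha}$ over $t\ge\laM$ decays like $\alpha^{-r}$, not $\alpha^{-1}$); so the genuinely unconditional claim is delicate there as well, and for $\alpha\le1$ the paper's computation reduces to yours. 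In short: your proof is sound and matches the paper's mechanism, but you should either carry the hypothesis $\laM^{r}\ge\Enorm{V}$ explicitly into the statement you prove, or introduce the free shift $\alpha$ and address the large-$\alpha$ norm estimate if you want the lemma exactly as stated.
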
 
\new{Here, $\Ran$ denotes the range (or image) of the following operator.}

Thus for $\lax < \ka_M$,  
the operator $\HMp - \lax$ is invertible and
we can solve~\eqref{eq:Seq2} for $\varphi_M^\perp$ and thus $\varphi_M^\perp=- (\HMp - \lam)^{-1}\PMp V\varphi_M$. Substituting the result into~\eqref{eq:Seq1}, we obtain the
non-linear eigenvalue problem 
\begin{equation}
	\label{EVPN}
	\big( \HM + \UM(\lam)\big) \varphi_M = \lam \varphi_M,
\end{equation}
where we introduced the \emph{effective interaction} $\UM(\lax):\XM\to \XM$, or a Schur complement,
\begin{align}
	\label{UNlam-def}
	\UM(\lam)
	:=
	-\PM V\PMp (\HMp- \lax)^{-1} \PMp V\PM.
\end{align}

We then have the following proposition, which is proved in Appendix~\ref{sec:tech-res}.
\begin{prop}
\label{prop:UN-well-defined} 
 For each $\lax$ such that $\lax <  \ka_M$,  $\UM(\lax)$ is a  well-defined 
 operator as a product of three maps: $\PM, V$ and $\PMp ( \HMp - \lax)^{-1} \PMp$ between various but matching Sobolev spaces. 
  \end{prop}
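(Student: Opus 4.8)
The plan is to verify that each of the three factors in the definition~\eqref{UNlam-def} of $\UM(\lax)$ maps continuously between appropriate Sobolev spaces, so that their composition is a well-defined bounded operator on $\XM$. The natural chain of spaces to use is dictated by Assumption~\ref{as:pot}: since $V$ is bounded from $\Hs{1-r}$ to $\Hs{-1+r}$ (equivalently $(-\Delta+1)^{-1/2+r/2}V(-\Delta+1)^{-1/2+r/2}$ is bounded on $\Lt$), we should arrange the composition so that $V$ is always applied to an element of $\Hs{1-r}$ and produces an element of $\Hs{-1+r}$. Concretely, reading~\eqref{UNlam-def} from right to left, I would track
\[
\XM \;\xrightarrow{\ \PM\ }\; \Hs{1-r} \;\xrightarrow{\ V\ }\; \Hs{-1+r} \;\xrightarrow{\ \PMp(\HMp-\lax)^{-1}\PMp\ }\; \Hs{1-r} \;\xrightarrow{\ V\ }\; \Hs{-1+r} \;\xrightarrow{\ \PM\ }\; \XM.
\]

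First I would treat the two outer projections $\PM$. Since $\XM$ is a finite-dimensional space spanned by planewaves with Laplacian eigenvalue below $\laM$, the projection $\PM$ is bounded from $\Lt$ (indeed from any $\Hs{s}$) into $\Hs{t}$ for every $t$, because on $\XM$ all Sobolev norms are equivalent, with constants depending on $M$. In particular $\PM:\Hs{-1+r}\to\XM\subset\Lt$ is bounded, and $\PM:\XM\to\Hs{1-r}$ is bounded (this is just the embedding $\XM\hookrightarrow\Hs{1-r}$). Next, the middle factor $V$: by Assumption~\ref{as:pot} and the identification of $\Enorm{V}$ as the norm of $V:\Hs{1-r}\to\Hs{-1+r}$ (using the definition $\Hs{s}=(-\Delta+1)^{s/2}\Lt$), this factor is bounded with norm $\Enorm{V}$. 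The only substantive point is the resolvent factor $\PMp(\HMp-\lax)^{-1}\PMp$: I must show it maps $\Hs{-1+r}$ boundedly into $\Hs{1-r}$ for $\lax<\ka_M$. By Lemma~\ref{lem:Hperp-low-bnd}, $\HMp\ge\ka_M$ on $\Ran\PMp$, so $(\HMp-\lax)^{-1}$ is a bounded nonnegative self-adjoint operator on $\Ran\PMp$ with norm at most $(\ka_M-\lax)^{-1}$. To upgrade this to a bound between the Sobolev spaces $\Hs{-1+r}$ and $\Hs{1-r}$, I would write $(\HMp-\lax)^{-1}=(-\Delta+1)^{-1/2+r/2}\big[(-\Delta+1)^{1/2-r/2}(\HMp-\lax)^{-1}(-\Delta+1)^{1/2-r/2}\big](-\Delta+1)^{-1/2+r/2}$ and show the bracketed operator is bounded on $\Lt$; since $r\le 1$ this amounts to controlling $(-\Delta+1)^{1-r}(\HMp-\lax)^{-1}$ on $\Ran\PMp$, which follows from the lower bound $\HMp\ge\ka_M$ together with the elementary operator inequality $\HMp\ge c\,(-\Delta+1)$ on $\Ran\PMp$ for a suitable $c>0$ again coming from Assumption~\ref{as:pot} (the potential is $\Delta$-form bounded with relative bound that can be made $<1$ on $\Ran\PMp$ by the same estimate proving Lemma~\ref{lem:Hperp-low-bnd}). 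Composing the five bounded maps yields that $\UM(\lax):\XM\to\XM$ is well-defined and bounded.

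The main obstacle is the resolvent step: one has to make sure that the bound $\HMp\ge\ka_M$ not only gives invertibility on $\Ran\PMp$ but also controls the resolvent in the right Sobolev topology, i.e. that $(\HMp-\lax)^{-1}$ genuinely gains two derivatives (from $\Hs{-1+r}$ to $\Hs{1-r}$, a gain of $2-2r$) rather than merely staying bounded on $\Lt$. This is where the restriction $r>0$ and the form-boundedness consequence of Assumption~\ref{as:pot} are used; everything else (boundedness of $\PM$ between Sobolev spaces, the identification of $\Enorm{V}$ with a map norm) is routine bookkeeping with the Fourier-defined powers of $-\Delta+1$. I would also note in passing that the argument makes the dependence on $M$ and $\lax$ quantitative, which is what the later error analysis needs.
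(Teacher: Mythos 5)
Your proposal is correct and takes essentially the same route as the paper: the paper's own proof writes $\UM(\lax)$ as a product of five bounded operators obtained by inserting factors of $(-\Delta+1)^{\pm(1/2-r/2)}$, which is exactly your chain $\XM\to\Hs{1-r}\to\Hs{-1+r}\to\Hs{1-r}\to\Hs{-1+r}\to\XM$ in the equivalent language $\Hs{s}=(-\Delta+1)^{s/2}\Lt$. You actually supply more justification than the paper does for the resolvent factor $(-\Delta+1)^{1/2-r/2}\PMp(\HMp-\lax)^{-1}\PMp(-\Delta+1)^{1/2-r/2}$, whose boundedness the paper simply asserts.
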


Now, we construct a completely computable approximation of the eigenvalue problem~\eqref{EVPN}, with the operators involved being  sums of products of finite matrices.
Namely, we expand the resolvent $(\HMp- \lax)^{-1}_{|\text{Ran}\PMp} = ( -\Delta + V_M^\perp- \lax)^{-1}_{|\text{Ran}\PMp}$ in \eqref{UNlam-def} in the formal Neumann series in $V_M^\perp\new{:= \PMp V\PMp}$, then  truncate this series at $K\in\N$ and replace the projections $\PMp=\one-\PM$ by $\PNM:=\PN-\PM$, with $N>M$. Introducing the notation 
\begin{equation}
    \label{eq:GNM22}
    \GNM(\lax) := ( -\Delta - \lax)|_{\Ran \PNM}^{-1}
\end{equation}
and $\VMN:= \PNM V \PNM$, we obtain the following
 truncated effective interaction
\begin{align}\label{UNMKlam}
	\Us(\lax)
	:=
	-\PM V \PNM
	 R_{\sigma}(\lax)  \PNM V \PM,
\end{align}
where $\sigma:=(N, M, K)$ and $R_{\sigma}(\lax) :=\sum_{k=0}^K (-1)^{k}\Big[ \GNM(\lax)\VMN \Big]^k \GNM(\lax) $. 
 Since all the operators involved in \eqref{UNMKlam} are finite matrices, this family is well-defined and computable.
Now, we define $\Hsi(\lax):=\HM + \Us(\lax)$ on $\XM$ and consider the eigenvalue problem: 
find an eigenvalue $\laxsi$ and the corresponding eigenfunctions $\varphi_{\sigma i}\in \XM$ such that
\begin{align}
	\label{EVPNMK}
	\Hsi(\laxsi) \varphi_{\sigma i} = \laxsi \varphi_{\sigma i}.
\end{align}
 Next, we define the approximate `lifting' operator whose origin will be become clear in the next section:
 \begin{align} \label{Qsig}  
 Q_{\sigma} (\lax)  
 &:= 
 \one - R_{\sigma}(\lax) \PNM V \PM.
\end{align}

\new{
Note that in the case of $N=M$ there holds $\PNM=0$, $\Us(\lax)=0$, and thus, equation~\eqref{EVPNMK} (and ~\eqref{EVPN}) simply reduces to the variational approximation involving the Hamiltonian $\HM$. 
}

\subsection{Main results} 
Within this manuscript, we denote by $\ls$ upper bounds involving constants that do not depend on the parameters $\sigma=(N,M,K), \aaa, r, \| V \|_r$.
Then, we have the following result, \new{whose proof will be provided in Section~\ref{sec:main-proof}.} 
\begin{thm}   
\label{thm:main}
 Let Assumption \ref{as:pot} hold, let $\la$ be an isolated eigenvalue of $\Hcal$ of finite multiplicity $m$, \new{with eigenfunctions $\varphi_{ i}$, and} let $\g_0$ denote the gap \new{between} $\la$ \new{and} the rest of the spectrum of $\Hcal$. 
 
 Then, there exists $\aaa>0$ and $M_0\in\mathbb N$ such that for $\new{N \ge } M \ge M_0$, problem~\eqref{EVPNMK} has $m$ solutions $( \varphi_{\sigma i}, \laxsi)\in \XM\times [\las-\frac{\g_0}{2},\las+\frac{\g_0}{2}]$
 approximating $( \varphi_{ i}, \las)$ in the following sense: 
\begin{align}
    |\la-\laxsi|
    &\ls 
    (\las + \aaa)
    \frac{\Enorm{V}^2 }{\aaa^r}
     \varepsilon(\sigma,r,V),
	\\
   \| \varphi_{i} - Q_{\sigma} (\laxsi)\varphi_{\sigma i} \|
    & \ls 
    \Enorm{V} \left[
    1+ \frac{\lac}{\g_0} \frac{\Enorm{V}}{ \aaa^r} 
    \right]
    \, \varepsilon(\sigma,r,V),
 	\label{eq:est-eigenvectors2-0}
\end{align}
\new{where  $\lac = \la+\g_0+\aaa$ and}
\[
    \varepsilon(\sigma,r,V)
    :=
    \laN^{-r} 
	  + \laM^{-r}  
	 \left[ 4 \laM^{-r}\Enorm{V} \right]^{K+1}.
\]
\end{thm}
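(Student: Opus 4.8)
The plan is to exploit the isospectrality of the Feshbach--Schur map (Theorem~\ref{thm:isospF}) in two stages: first relating the exact eigenvalue problem~\eqref{eq:EVP} to the exact nonlinear problem~\eqref{EVPN}, and then relating~\eqref{EVPN} to the computable truncated problem~\eqref{EVPNMK} via a perturbation estimate on $\UM(\lax)-\Us(\lax)$. Throughout I would fix $\aaa>0$ (to be chosen) and work with the shifted operator, restricting attention to spectral parameters $\lax$ in the interval $[\las-\tfrac{\g_0}{2},\las+\tfrac{\g_0}{2}]$; the first task is to check that $\ka_M$ from Lemma~\ref{lem:Hperp-low-bnd} exceeds $\las+\tfrac{\g_0}{2}$ once $M\ge M_0$, so that all the resolvents $(\HMp-\lax)^{-1}$, $\GNM(\lax)$ are well defined and bounded uniformly on that interval, with $\HMp-\lax\ge\aaa$ say. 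This uses $\laM\to\infty$ and $\laM^{-r}\Enorm{V}\to 0$.

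\emph{Step 1 (exact reduction).} By the isospectrality of the Feshbach--Schur map, $\las$ is an eigenvalue of $\Hcal$ of multiplicity $m$ if and only if $0$ is an eigenvalue of $\HM+\UM(\las)-\las$ of multiplicity $m$ on $\XM$, with the eigenvectors related by the lifting $\varphi=\big(\one-(\HMp-\las)^{-1}\PMp V\PM\big)\varphi_M$. This identifies the exact nonlinear eigenpair $(\varphi_{M,i},\las)$ solving~\eqref{EVPN} and its lift. The work here is mostly bookkeeping given Theorem~\ref{thm:isospF}; one must verify the spectral-gap hypotheses of that theorem hold on $[\las-\tfrac{\g_0}{2},\las+\tfrac{\g_0}{2}]$ for $M$ large, which again reduces to Lemma~\ref{lem:Hperp-low-bnd}.

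\emph{Step 2 (perturbation of the effective interaction).} The heart of the proof is the bound
\[
  \big\| \UM(\lax) - \Us(\lax) \big\|
  \;\ls\;
  \frac{\Enorm{V}^2}{\aaa^r}\,\varepsilon(\sigma,r,V),
\]
uniformly for $\lax$ in the interval, together with a companion bound on $Q_\sigma(\lax)-\big(\one-(\HMp-\lax)^{-1}\PMp V\PM\big)$. I would split the difference into two pieces: (i) replacing $\PMp$ by $\PNM$, which produces the tail term $\laN^{-r}$ — here one estimates $\PN^\perp V\PM$ and $\PM V\PN^\perp$ in the appropriate Sobolev norms using Assumption~\ref{as:pot} and the fact that on $\Ran\PN^\perp$ the operator $(-\Delta+1)$ is bounded below by $\laN$; and (ii) truncating the Neumann series at order $K$, which produces the geometric factor $\laM^{-r}[4\laM^{-r}\Enorm{V}]^{K+1}$ — here the key estimate is $\big\|\GNM(\lax)^{1/2}\VMN\GNM(\lax)^{1/2}\big\|\le 4\laM^{-r}\Enorm{V}<1$ for $M\ge M_0$, which makes the Neumann series converge with ratio bounded by that quantity, and then bounds the remainder after $K$ terms by the next term of the geometric series, with the extra $\laM^{-r}$ coming from the two flanking factors $\PM V$ and the $\GNM^{1/2}$'s. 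The algebraic-in-$K$ behaviour improving with $M$ is exactly the statement that the ratio $4\laM^{-r}\Enorm{V}$ decreases in $M$.

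\emph{Step 3 (from operator bound to eigenvalue/eigenvector bounds).} Given the bound of Step 2, I would apply a standard eigenvalue-perturbation argument for the nonlinear families $\lax\mapsto\HM+\UM(\lax)-\lax$ and $\lax\mapsto\Hsi(\lax)-\lax$ on $\XM$: since $\UM$ and $\Us$ are close in operator norm and, by differentiating the resolvent, have derivatives in $\lax$ bounded by $\ls\Enorm{V}^2\aaa^{-1-r}$ — so the "$\lax$"-term dominates and the nonlinear eigenvalue is a genuine contraction fixed point — the implicit function theorem (or a direct Rouché/min-max argument using the gap $\g_0$) yields $m$ solutions $\laxsi$ of~\eqref{EVPNMK} in $[\las-\tfrac{\g_0}{2},\las+\tfrac{\g_0}{2}]$ with $|\las-\laxsi|\ls\|\UM(\laxsi)-\Us(\laxsi)\|$ times a factor absorbing $(\las+\aaa)$ and the normalization of eigenvectors, giving the first estimate. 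For the eigenvector bound, write $\varphi_i-Q_\sigma(\laxsi)\varphi_{\sigma i}$ as the sum of (a) the difference of the two lifting operators applied to $\varphi_{\sigma i}$, controlled by Step 2's companion bound, and (b) $\big(\one-(\HMp-\las)^{-1}\PMp V\PM\big)(\varphi_{M,i}-\varphi_{\sigma i})$, controlled by the eigenvector stability of~\eqref{EVPN} versus~\eqref{EVPNMK}, which in turn is controlled by $\|\UM-\Us\|/\g_0$; collecting the prefactors $\Enorm{V}$ (from the lifting) and $\lac/\g_0\cdot\Enorm{V}/\aaa^r$ gives~\eqref{eq:est-eigenvectors2-0}.

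\emph{Main obstacle.} The delicate point is Step 2, specifically getting the \emph{two} flanking powers of $\laM^{-r}$ in the truncation term (not just the geometric ratio to the $(K+1)$) and simultaneously the clean $\laN^{-r}$ for the projection replacement, while keeping all constants independent of $\sigma,\aaa,r,\Enorm{V}$. This requires carefully tracking which Sobolev space each factor acts between — $\PM V:\Hs{-1+r}\leftarrow$ something, $\GNM:\Hs{1-r}\to\Hs{1+r}$ restricted to $\Ran\PNM$ with norm $\ls\laM^{-r}$ after accounting for the spectral lower bound and the shift by $\lax\le\las+\tfrac{\g_0}{2}<\ka_M$ — and using $\Enorm{V}$ as the single quantitative handle on $V$ via Assumption~\ref{as:pot}. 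Establishing the required mapping properties and norm bounds for $\GNM(\lax)$ uniformly in $\lax$ on the interval (via the spectral theorem for $-\Delta$ on $\Ran\PNM$, interpolating between the $\Hs{0}$ and $\Hs{1}$ bounds to get the $\laM^{-r}$) is the technical core; everything else is a fairly routine, if lengthy, assembly.
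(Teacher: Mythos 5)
Your plan follows essentially the same route as the paper: FS isospectrality for the exact reduction; the splitting of $\UM-\Us$ into a projection-replacement part (giving $\laN^{-r}$) and a Neumann-truncation part (giving $\laM^{-r}\left[4\laM^{-r}\Enorm{V}\right]^{K+1}$), exactly as in Lemma~\ref{lem:UN-UNK-bnd}; a finite-dimensional perturbation/fixed-point argument on $\XM$ (the paper freezes the comparison at $\UM(\las)$ versus $\Us(\laxsi)$, applies Theorem~\ref{thm:FS-pert-var} with $H_0=\HM(\las)$, and absorbs the $|\las-\laxsi|$ Lipschitz term via Lemma~\ref{lem:UNdist}; your contraction/implicit-function formulation is the same bootstrap in different clothing); and the same decomposition of the eigenvector error into the reduced-space error plus the difference of the two lifting operators, which the paper estimates through the terms $I_1,I_2,I_3$.

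There is, however, one genuine missing ingredient in your Step 3: you invoke ``the gap $\g_0$'' for the reduced problem on $\XM$, but $\g_0$ is the gap of $\las$ in the spectrum of $\Hcal$, not of $\HM(\las)$. Isospectrality of the FS map only identifies solutions of the fixed-point equations $\nu_{Mi}(\lax)=\lax$ with eigenvalues of $\Hcal$; at frozen $\lax=\las$ the remaining eigenvalues $\nu_{Mj}(\las)$, $j\ne i$, of $\HM(\las)$ are not eigenvalues of $\Hcal$ and could a priori sit arbitrarily close to $\las$, which would break the Rouch\'e/min-max step. The paper closes this with a short monotonicity argument: since $\lax\mapsto\UM(\lax)$ is decreasing, one gets $\nu_{Mj}(\las)-\nu_{Mi}(\las)\ge\lax_j-\lax_i$, so the gap of $\HM(\las)$ at $\las$ is at least $\g_0$ (Lemma~\ref{lem:HN-gap-bs}, Corollary~\ref{cor:HN-gap}); this step must be added to your plan. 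A smaller point: the bound of your Step 2 should be taken in the relative form norm $\|\cdot\|_{\HM(\las),\aaa}$, or in $\|\cdot\|_r$ and then converted as in Lemma~\ref{lem:HW} (which is exactly where the factor $\aaa^{-r}$ originates), rather than in the plain operator norm on $\XM$, which would carry an $M$-dependent factor; your final constants indicate you intend the weighted norm, but the conversion should be made explicit.
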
     
This Theorem is subject to several remarks.
\begin{remark}
Note that $\varepsilon$ is equivalent to 
\[
    \varepsilon(\sigma,r,V)
    \approx 
	 N^{-2r} + M^{-2r}  
	 \left[ 4 \left(\tfrac{L}{2\pi}\right)^{2r}  M^{-2r}\Enorm{V} \right]^{K+1},
\]
where the equivalence constants do not depend on the parameters $\sigma=(N,M,K),r,\aaa,V$.
\end{remark}
\begin{remark}
In some cases, for instance in multi-scale problems, one might be only interested in the coarse-scale solution, i.e. the best-approximation in the coarse space $\XM$ given by $\PM\varphi_{i}$.
In such cases, a useful byproduct of the proof of Theorem~\ref{thm:main} is the following estimate
\begin{equation}
   \| (-\Delta+\id)^s (\PM\varphi_{i} - \varphi_{\sigma i}) \|
    \ls 
    \frac{\lac}{\g_0} \frac{\Enorm{V}^2}{ \aaa^r} 
     \rho_M^s\,
    \varepsilon(\sigma,r,V), 
	\label{eq:est-eigenvectors1-0}
\end{equation}
for any $s\ge 0$, which thus compares the eigenfunctions in the space $\XM$.
\end{remark}
\begin{remark}
Note that convergence of the eigenvalues and the eigenfunctions can be achieved by taking the limit $K,N\to \infty$ for fixed $M\ge M_0$. 
For practical purposes, the idea is to set $N$ large enough so that the error is dominated by the error introduced in $K<+\infty$. 

Further, note that the eigenvalue and eigenvector errors have the same rate of convergence with respect to $K$. However, the error in the eigenvector depends on the gap $\g_0$ while the error in the eigenvalue does not.
\end{remark}

The estimate with respect to $N$ in Theorem~\ref{thm:main} is not sharp in all cases, in particular for sufficiently regular potentials $V$. Nonetheless, our analysis has the merit of presenting the convergence result in one combined analysis based on perturbative techniques which also holds for low regularities of the potential where standard {\it a priori} \new{convergence results of the variational approximation}
\old{approximation results of the variational approximation result} 
do not hold. 
\new{In fact, still in the low regularity regime, an estimate of the variational problem can be obtained by setting $K\to \infty$ or $M=N$.}

Note that we can adapt the result whenever {\it a priori} approximation results are available by employing the triangle inequality.
Indeed, if the potential $V$ belongs to the Sobolev space $V\in\Hs{s}$, with $s>d/2$, we resort to {\it a priori} results in a first place to obtain a sharp bound with respect to $N$, see e.g., \cite{Babuska1991-cg,cances2010numerical,Boffi2010-wh}, and also~\cite{norton2010convergence} for a certain class of discontinuous potentials in $H^{1/2-\varepsilon}$ for all $\varepsilon>0$, in two dimensions.

More precisely, we consider $\Hcal$ acting on $\XN$ directly, i.e. substituting $\Hcal$ by $\Hcal_N:=\PN\Hcal\PN$ and using $\sX=\XN$ with variational solution $(\varphi_{N},\lax_{N})$, assuming a simple eigenvalue for simplicity.
It is important to note that problem~\eqref{EVPNMK} remains unchanged and thus, the result of Theorem~\ref{thm:main} holds with 
\[
    \widetilde\varepsilon(\sigma,r,V)
    :=
	\laM^{-r} 
	 \left[ 4 \laM^{-r}\Enorm{V} \right]^{K+1},
\]
but where the exact solution is substituted by $(\varphi_{N},\lax_{N})$.
Proceeding then by the triangle inequality yields
\begin{align*}
    |\las - \lax_{\sigma i} |
    &\le 
    |\las- \lax_{N}|
    +
    |\lax_{N} - \lax_{\sigma i}|,
    \\
    \|\varphi_i  - Q_{\sigma}(\lax_{\sigma i})\varphi_{\sigma i} \|
    &\le 
    \|\varphi_i - \varphi_{N}\|
    +
    \|\varphi_{N} - Q_{\sigma}(\lax_{\sigma i})\varphi_{\sigma i}\|.
\end{align*}
Combining then the aforementioned {\it a priori} estimates from~\cite{Babuska1991-cg,cances2010numerical,Boffi2010-wh}
for the first terms of the right hand sides with Theorem~\ref{thm:main} for the latter parts yields the following corollary.

\begin{cor}
\label{rem:TriangleIneq}
   Under the conditions of Theorem~\ref{thm:main} and if $V\in\Hs{s}$, $s>d/2$, then
\begin{align*}
  |\las - \lax_{\sigma i} | 
  &\le  
  C \, \left( N^{-(2s+2)} + \widetilde\varepsilon(\sigma,r,V)
    \right),
    \\
  \|\varphi_i - Q_{\sigma}(\lax_{\sigma i})\varphi_{\sigma i}\|
  &\le  
  C \, \left( N^{-(s+2)} + \widetilde\varepsilon(\sigma,r,V)
    \right),
\end{align*}
for some constant $C>0$ independent on $\sigma=(N,M,K)$.
\end{cor}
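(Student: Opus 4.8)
The plan is to derive Corollary~\ref{rem:TriangleIneq} as an almost immediate consequence of Theorem~\ref{thm:main} combined with classical \emph{a priori} estimates for variational (Galerkin) approximations of linear eigenvalue problems, glued together with the triangle inequality exactly as outlined in the paragraph preceding the statement. The conceptual point is that problem~\eqref{EVPNMK}, being defined purely in terms of the finite matrices $\HM$, $\GNM$, $\VMN$, $\PM$, $\PNM$, does not ``know'' whether the ambient space is $\sX$ or the finite-dimensional $\XN$; hence Theorem~\ref{thm:main} applies verbatim with $\sX$ replaced by $\XN$ and $\Hcal$ replaced by $\Hcal_N=\PN\Hcal\PN$, provided $N \ge M \ge M_0$ and $\lax_N < \ka_M$ (which holds for $M$ large since $\ka_M \to \infty$ and $\lax_N \to \las$). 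In that reading the ``exact'' eigenpair becomes the Galerkin eigenpair $(\varphi_N,\lax_N)$, the $\laN^{-r}$ term in $\varepsilon(\sigma,r,V)$ disappears because there is no further discretization error between $\XN$ and itself, and one is left with $\widetilde\varepsilon(\sigma,r,V) = \laM^{-r}[4\laM^{-r}\Enorm{V}]^{K+1}$. This gives the $\widetilde\varepsilon$-part of both bounds, with constants independent of $\sigma$.

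Next I would invoke the standard \emph{a priori} convergence theory for conforming Galerkin approximations of self-adjoint elliptic eigenvalue problems, as in \cite{Babuska1991-cg,cances2010numerical,Boffi2010-wh}: for a simple isolated eigenvalue $\las$ with eigenfunction $\varphi$ of Sobolev regularity governed by that of $V$, one has the best-approximation-squared estimate for eigenvalues, $|\las - \lax_N| \ls \big(\inf_{v\in\XN}\|\varphi-v\|_{\Hs1}\big)^2$, and the first-order estimate for eigenfunctions, $\|\varphi - \varphi_N\|\ls \inf_{v\in\XN}\|\varphi - v\|$ (in $\Lt$, by an Aubin--Nitsche/duality argument), up to the usual caveat of choosing the sign/phase of $\varphi_N$. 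It remains to quantify the planewave best-approximation error: if $V\in\Hs{s}$ with $s>d/2$, then elliptic regularity for $-\Delta+V$ with the eigenvalue equation $\Hcal\varphi=\las\varphi$ bootstraps $\varphi\in\Hs{s+2}$, and the planewave truncation estimate $\inf_{v\in\XN}\|\varphi-v\|_{\Hs{t}} \ls \laN^{(t-m)/2}\|\varphi\|_{\Hs{m}}\approx N^{t-m}\|\varphi\|_{\Hs{m}}$ for $t\le m$ gives $\inf_{v\in\XN}\|\varphi-v\|_{\Hs1}\ls N^{-(s+1)}$ and $\inf_{v\in\XN}\|\varphi-v\|\ls N^{-(s+2)}$. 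Squaring the former yields the $N^{-(2s+2)}$ term for the eigenvalue, and the latter is exactly the $N^{-(s+2)}$ term for the eigenfunction.

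Finally I would assemble the two pieces. For the eigenvalue, write $|\las - \lax_{\sigma i}| \le |\las - \lax_N| + |\lax_N - \lax_{\sigma i}|$, bound the first term by $C N^{-(2s+2)}$ from the \emph{a priori} theory and the second by $C\,\widetilde\varepsilon(\sigma,r,V)$ from Theorem~\ref{thm:main} applied on $\XN$ (absorbing the harmless prefactor $(\las+\aaa)\Enorm{V}^2/\aaa^r$, which depends only on $\las$, $\aaa$, $r$, $\Enorm{V}$ but not on $\sigma$, into $C$). For the eigenfunction, similarly $\|\varphi_i - Q_\sigma(\lax_{\sigma i})\varphi_{\sigma i}\| \le \|\varphi_i - \varphi_N\| + \|\varphi_N - Q_\sigma(\lax_{\sigma i})\varphi_{\sigma i}\|$, with the first term $\le C N^{-(s+2)}$ and the second $\le C\,\widetilde\varepsilon(\sigma,r,V)$ from~\eqref{eq:est-eigenvectors2-0} applied on $\XN$ (again absorbing the $\sigma$-independent bracket $[1 + \tfrac{\lac}{\g_0}\tfrac{\Enorm{V}}{\aaa^r}]\Enorm{V}$). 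The main obstacle, and the only place requiring genuine care rather than bookkeeping, is the clean justification that Theorem~\ref{thm:main} transfers to the ambient space $\XN$: one must check that all hypotheses of the theorem — isolatedness of $\lax_N$ with a gap bounded below uniformly in $N$ for $N$ large, validity of $M_0$, $\aaa$, and the invertibility condition $\lax_N < \ka_M$ — persist for the truncated operator $\Hcal_N$, and that the constants produced are genuinely uniform in $N$. This follows from convergence of $\lax_N \to \las$ and of the spectral gap, but it is the step that deserves an explicit sentence; the rest is the triangle inequality plus citing the literature.
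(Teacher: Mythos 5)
Your proposal follows essentially the same route as the paper: the paper's own argument is exactly the preceding paragraph, namely applying Theorem~\ref{thm:main} with $\sX=\XN$ and $\Hcal$ replaced by $\Hcal_N=\PN\Hcal\PN$ (so the $\laN^{-r}$ term drops, leaving $\widetilde\varepsilon$), then combining via the triangle inequality with the cited \emph{a priori} planewave estimates ($N^{-(2s+2)}$ for the eigenvalue, $N^{-(s+2)}$ for the eigenfunction, using $\varphi\in\Hs{s+2}$ from elliptic regularity). Your additional remark on checking uniformity in $N$ of the gap and of the admissible $M_0$, $\aaa$ when transferring Theorem~\ref{thm:main} to $\XN$ is a correct and welcome refinement of a point the paper passes over implicitly, but it does not change the argument.
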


\begin{remark}
    \label{rem:FSM-otherbasis} \new{Instead of the space $\XM$ defined in \eqref{XM}  spanned by the eigenfunctions of $-\Delta$ on $\cR$, 
    we could have taken another finite dimensional approximation of the space $L^2(\R)$. }
\end{remark}
\begin{remark}
    \label{rem:FSM-multigrid} \new{Given a sequence of larger and larger finite dimensional spaces, one could apply the FS maps consequently with larger and larger projections. 
    In particular, this could lead to a version of a multigrid technique.} 
\end{remark}

\subsection{Theoretical background}\label{sec:theor}
Let us shed \old{the attention to}\new{light on} the theoretical foundation on the eigenvalue formulation in form of~\eqref{EVPN}, instead of~\eqref{eq:EVP}. \new{For a pair of 
projections $P$ and $P^\perp$  such that $P+P^\perp = \id$, we define the set $D_P$ of operators $H$ on a Hilbert space  such that $H^\perp:=P^\perp H P^\perp$  is invertible 
on $\Ran P^\perp$ and the operators $HP$ and $PH$ are bounded. Furthermore, for an operator $H\in D_P$, we define the bounded operator 
\begin{equation} \label{Fesh}
 F_{P} (H) \ := \ P (H  - H R^\perp H) P ,
\end{equation}
where $R^\perp:=P^\perp  (H^\perp)^{-1} P^\perp$,  acting on the subspace $\Ran P$. 
Our approach} is based on the following results, originally presented in~\cite[Theorem 11.1]{Gustafson2011-wd}. 
\begin{thm}
\label{thm:isospF}
Let $P$ and $P^\perp$ be a pair of 
projections such that $P+P^\perp = \id$.  
Then $F_{P}\new{:H \ra F_{P} (H ) }$, considered as  a map \new{from the subspace $D_P$} into the space of bounded operators \new{acting on the subspace} \new{$\Ran P$,}  is \textit{isospectral} in the following sense: \new{if an operator $H$ and a number $\lambda\in \R$ are such that $H-\lambda\in D_P$, then}
\begin{itemize}
\item[(a)] $ \lambda \in \sigma(H ) \qquad \Longleftrightarrow \qquad 0 \in \sigma(F_{P}
(H - \lambda))$; 
\item[(b)]  
$H\psi = \lambda \psi \qquad \Longleftrightarrow\qquad 
F_{P} (H - \lambda) \, \varphi = 0;$
\item[(c)] 
$\dim Null (H - \lambda) = \dim Null F_{P} (H -
\lambda)$.
\end{itemize}
Moreover, $\psi$ and $\varphi$ in (b) are related as $\varphi= P\psi$ and $\psi=Q_{P}(\lax)\varphi$, where 
\[
    Q_{P}(\lax)   :=  P - P^\perp \, (H^\perp- \lambda)^{-1}P^\perp H P.
\]
Finally, if $H$ is self-adjoint, then so is $F_{P} (H )$.
\end{thm}
\new{Here, $Null$ denotes the null space (or kernel) of the following operator.}
The map $F_{P}$ on the space of operators, is called the {\it Feshbach--Schur map}. The relation $\psi=Q_{P}(\lax)\varphi$ allows us to reconstruct the full eigenfunction from the projected one.
By statement~(a), we have
\begin{cor}\label{cor:nuFP} 
Let $\nu_i(\lax)$ denote the $i$-th eigenvalue of the operator $F_{P} (H - \lambda)+\lambda $ for each $\lax$ in an interval $I\subset \R$. 
\old{Then the eigenvalues  of $H$ in  $I$ are in one-to-one correspondence with the solutions of the equations}
\new{Then, there exists a bijection between the eigenvalues of $H$ in $I$ and the solutions of the equation}
\[
    \nu_i(\lax)=\lax.
\] 
\end{cor}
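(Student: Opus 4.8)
\textbf{Proof plan for Corollary~\ref{cor:nuFP}.}
The plan is to read off the statement directly from part~(a) of Theorem~\ref{thm:isospF} applied to $H$ and the fixed pair of projections $P, P^\perp$. First I would fix $\lax \in I$ and apply part~(a) to the operator $H - \lax$ in place of $H$: since $H^\perp - \lax$ is assumed invertible on $\Ran P^\perp$ for $\lax$ in $I$ (this is the running hypothesis guaranteeing $F_P(H-\lax)$ is well defined on that interval), statement~(a) with the shift gives, for any $\mu \in \R$,
\[
   \mu \in \sigma(H - \lax) \quad \Longleftrightarrow \quad 0 \in \sigma\big(F_P(H - \lax - \mu)\big).
\]

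Next I would specialize this equivalence to $\mu = \lax$ itself. The left-hand side becomes $\lax \in \sigma(H - \lax)$, which by the spectral mapping for the translation $H \mapsto H - \lax$ is equivalent to $2\lax \in \sigma(H)$ --- no, more cleanly: I would instead apply~(a) not to $H - \lax$ but observe that $\lax \in \sigma(H)$ iff $0 \in \sigma(H - \lax)$ iff $0 \in \sigma(F_P(H - \lax))$, the last step being exactly part~(a) of Theorem~\ref{thm:isospF} read for the operator $H$ at spectral parameter $\lax$. Now $0 \in \sigma(F_P(H - \lax))$ is equivalent to $\lax \in \sigma\big(F_P(H - \lax) + \lax \one\big)$, again by translation of the spectrum. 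Since by hypothesis $F_P(H - \lax)$ is a bounded self-adjoint operator (self-adjointness coming from the final sentence of Theorem~\ref{thm:isospF}, using that $H$ is self-adjoint), its spectrum in the relevant range consists of eigenvalues $\nu_i(\lax)$, so $\lax \in \sigma\big(F_P(H-\lax)+\lax\one\big)$ holds precisely when $\nu_i(\lax) = \lax$ for some index $i$. Chaining the equivalences yields: $\lax$ is an eigenvalue of $H$ (lying in $I$) if and only if $\nu_i(\lax) = \lax$ for some $i$, which is the claimed one-to-one correspondence.

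To make the correspondence genuinely bijective rather than merely a logical equivalence at each point, I would add a short remark that the $i$-th eigenvalue branch $\nu_i(\cdot)$ is the natural labelling and that distinct eigenvalues of $H$ in $I$ produce distinct solutions $\lax$ of $\nu_i(\lax) = \lax$ simply because the solution value \emph{is} the eigenvalue; conversely each fixed-point value is an eigenvalue of $H$ by the forward implication. If one wishes to track multiplicities, part~(c) of Theorem~\ref{thm:isospF} gives $\dim \Null(H - \lax) = \dim \Null F_P(H - \lax)$, so the multiplicity of $\lax$ as an eigenvalue of $H$ equals the number of indices $i$ (counted with the convention on repeated eigenvalues) for which $\nu_i(\lax) = \lax$; I would include this only if the later sections need it.

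The main (and only) obstacle is bookkeeping rather than mathematics: one must be careful that the interval $I$ is chosen inside the region where $H^\perp - \lax$ is invertible on $\Ran P^\perp$, so that $F_P(H - \lax)$ and hence the eigenvalue functions $\nu_i(\lax)$ are defined for every $\lax \in I$; this is exactly the condition under which Theorem~\ref{thm:isospF} was invoked, so nothing new is needed. Everything else is a direct transcription of part~(a), together with the elementary fact that translating an operator by a scalar translates its spectrum by that scalar.
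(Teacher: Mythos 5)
Your proposal is correct and takes exactly the route the paper intends: Corollary~\ref{cor:nuFP} is presented there as an immediate consequence of Theorem~\ref{thm:isospF}(a), via the chain $\lax\in\sigma(H)\Leftrightarrow 0\in\sigma\big(F_P(H-\lax)\big)\Leftrightarrow \lax\in\sigma\big(F_P(H-\lax)+\lax\one\big)\Leftrightarrow \nu_i(\lax)=\lax$ for some $i$, with multiplicities accounted for by part~(c), which is precisely your argument. Aside from the brief false start with the auxiliary shift $\mu$ (which you rightly discard mid-paragraph), nothing differs from the paper's reasoning.
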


In the current setting of \old{planewave}\new{the spectral Fourier} approximations, 
  $P=\PM$, $P^\perp = \PMp$,    Proposition \ref{prop:UN-well-defined} implies 
   that the results of Theorem~\ref{thm:isospF} apply for each choice of $M\in\mathbb N$  and yield\begin{align}
	\label{FS-HNlam}
 F_{\PM} (\Hcal - \lambda)=   \HMl- \lambda\PM, 
\end{align}
where we introduced  the notation 
\begin{align}
	\label{HNlam-def}
    \HMl:= \HM + \UM(\lam).
\end{align}
Note that $\HMl$ is exactly the operator entering \eqref{EVPN}. Thus, we have the following. 

\begin{cor}
\label{cor:isospFN} 
Let $\lax \in \C$ with 
 $\re\lax <  \ka_M$. Then
\begin{itemize}
\item[(a)]  
$\Hcal \psi = \lambda \psi \quad \Longleftrightarrow\quad 
(\HMl - \lambda) \, \varphi_M = 0;$
\item[(b)] 
$\dim Null (\Hcal - \lambda) = \dim Null  (\HMl - \lambda)$.
\item[(c)] $\psi$ and $\varphi$ in (a) are related as $\varphi_M= \PM\psi$ and $\psi=\QM(\lax)\varphi_M$, where
\begin{align}
    \label{eq:QM}
    \QM(\lax) 
    &=  
    \one - (\HMp -\lambda)^{-1} \PMp V \PM.
\end{align}
i.e. the corresponding eigenfunction can be reconstructed from $\varphi_M$ by an explicit linear map.
\end{itemize}
\end{cor}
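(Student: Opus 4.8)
The plan is to obtain Corollary~\ref{cor:isospFN} directly from the abstract isospectrality result Theorem~\ref{thm:isospF}, applied with $H=\Hcal$, $P=\PM$ and $P^\perp=\PMp$, to the shifted operator $\Hcal-\lax$. First I would check the hypotheses. The operator playing the role of ``$H^\perp$'' is $\PMp(\Hcal-\lax)\PMp=\HMp-\lax$ restricted to $\Ran\PMp$. By Lemma~\ref{lem:Hperp-low-bnd} we have $\HMp\ge\ka_M$ on $\Ran\PMp$, and since $\HMp$ is self-adjoint and semibounded there, its spectrum lies in $[\ka_M,\infty)$; hence $\HMp-\lax$ is boundedly invertible on $\Ran\PMp$ for every $\lax\notin[\ka_M,\infty)$, in particular whenever $\re\lax<\ka_M$. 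This is the complex-$\lax$ counterpart of the real-$\lax$ statement recorded just after Lemma~\ref{lem:Hperp-low-bnd}.

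For the remaining hypothesis, that $F_{\PM}(\Hcal-\lax)$ defines a bounded operator, I would use Proposition~\ref{prop:UN-well-defined}, which provides the effective interaction $\UM(\lax)$ as a well-defined bounded operator on the finite-dimensional space $\XM$, together with the computation recorded in \eqref{FS-HNlam}: since $-\Delta$ commutes with $\PM$ and $\PMp$, one has $\PM(\Hcal-\lax)\PMp=\PM V\PMp$ and $\PMp(\Hcal-\lax)\PM=\PMp V\PM$, so that $F_{\PM}(\Hcal-\lax)=\HM-\lax\PM+\UM(\lax)=\HMl-\lax\PM$, which is bounded because $\HM$ is bounded on $\XM$.

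It then remains to read off the three statements from Theorem~\ref{thm:isospF} and its ``Moreover'' clause. Restricting the identity $F_{\PM}(\Hcal-\lax)=\HMl-\lax\PM$ to $\XM=\Ran\PM$ identifies it with $\HMl-\lax$ as an operator on $\XM$; then Theorem~\ref{thm:isospF}(b), with $\varphi=\PM\psi$, is exactly statement (a), and Theorem~\ref{thm:isospF}(c), with the null space understood within $\Ran\PM$, is statement (b). For (c), the reconstruction formula of Theorem~\ref{thm:isospF} yields $\psi=\bigl(\PM-\PMp(\HMp-\lax)^{-1}\PMp\Hcal\PM\bigr)\varphi_M=\bigl(\PM-\PMp(\HMp-\lax)^{-1}\PMp V\PM\bigr)\varphi_M$; since $\PMp\varphi_M=0$ and $(\HMp-\lax)^{-1}$ maps $\Ran\PMp$ into itself, on $\XM$ this coincides with $\QM(\lax)\varphi_M=\bigl(\one-(\HMp-\lax)^{-1}\PMp V\PM\bigr)\varphi_M$ from \eqref{eq:QM}, which gives (c).

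There is no genuine obstacle here: the argument is essentially bookkeeping once Theorem~\ref{thm:isospF}, Lemma~\ref{lem:Hperp-low-bnd} and Proposition~\ref{prop:UN-well-defined} are in hand. The only points deserving a moment's care are the passage from invertibility of $\HMp-\lax$ for real $\lax<\ka_M$ to complex $\lax$ with $\re\lax<\ka_M$, which rests on the self-adjointness and semiboundedness of $\HMp$ on $\Ran\PMp$, and the clean identification, on $\XM$, of the abstract lifting operator of Theorem~\ref{thm:isospF} with the explicit formula~\eqref{eq:QM} for $\QM(\lax)$.
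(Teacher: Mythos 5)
Your argument is correct and is essentially the paper's own route: the corollary is obtained there, too, by applying Theorem~\ref{thm:isospF} with $P=\PM$, using Lemma~\ref{lem:Hperp-low-bnd} and Proposition~\ref{prop:UN-well-defined} to verify the hypotheses and the identity \eqref{FS-HNlam} to identify $F_{\PM}(\Hcal-\lax)$ with $\HMl-\lax\PM$. Your careful remarks on extending invertibility to complex $\lax$ with $\re\lax<\ka_M$ and on identifying the abstract lifting operator with \eqref{eq:QM} on $\XM$ are exactly the right bookkeeping points.
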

  This result shows that the original infinite-dimensional spectral problem~\eqref{eq:EVP} is equivalent to the finite dimensional spectral problem~\eqref{EVPN} which is nonlinear in the spectral parameter $\lax$. 
  We now state a few properties of the effective interaction $\UM(\lax)$, in order to characterize the solutions of the fixed-point problems $\nu_i(\lax) = \lax$. \new{First, we give a definition. A family of bounded, self-adjoint operators $T(\lambda):X\to X$, with $\lambda\in I\subset\mathbb R$, is said to be monotonically decreasing if $T(\lambda) < T(\lambda')$, in the sense of quadratic forms (i.e. $\lan u, T(\lambda)u\ran < \lan u, T(\lambda')u\ran$), for all $\lambda>\lambda'$. 
  Similarly, one can define increasing, non-increasing, and non-decreasing families of operators.
  If $T(\lambda)$ is a weakly differentiable family, then, by the fundamental theorem of calculus, if $T'(\lambda)\le 0$ and is not identically $0$, then $T(\lambda)$ is monotonically decreasing. }
 
\begin{prop}
\label{prop:UN-prop}
 For $\lax\in \R$ such that
$\lax <  \ka_M$, %
$\UM(\lax)$ is 
(i) non-positive, 
 (ii) monotonically decreasing with $\lam$, (iii) vanishing as $\lax\ra-\infty$. For $\lax \in \C$ such that $\re\lax <  \ka_M$, $\UM(\lax)$ is (iv) complex analytic in $\lax$ and (v) 
 symmetric.  
\end{prop}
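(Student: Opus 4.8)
\textbf{Proof plan for Proposition~\ref{prop:UN-prop}.}
The strategy is to read off each property directly from the defining formula
\[
    \UM(\lax) = -\PM V \PMp (\HMp - \lax)^{-1} \PMp V \PM,
\]
using the spectral lower bound $\HMp \ge \ka_M$ on $\Ran\PMp$ from Lemma~\ref{lem:Hperp-low-bnd}, which guarantees that $\HMp - \lax$ is boundedly invertible on $\Ran\PMp$ for $\re\lax < \ka_M$, together with the mapping properties of the three factors established in Proposition~\ref{prop:UN-well-defined}. The key observation throughout is that $\PMp V \PM$ is a fixed operator (independent of $\lax$), so all the $\lax$-dependence sits in the middle resolvent factor $(\HMp-\lax)^{-1}$, and each claimed property of $\UM$ is inherited from the corresponding property of that scalar-parametrized family of positive self-adjoint operators.

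First, for real $\lax < \ka_M$ set $A := \PMp V \PM$, so that $\UM(\lax) = -A^* (\HMp-\lax)^{-1} A$. Since $\HMp$ is self-adjoint with $\HMp - \lax \ge \ka_M - \lax > 0$ on $\Ran\PMp$, the operator $(\HMp-\lax)^{-1}$ is positive self-adjoint, hence so is $A^*(\HMp-\lax)^{-1}A$; this gives (i) non-positivity and, taking adjoints, (v) symmetry (self-adjointness) on $\XM$. For (ii), I would use that $\lax \mapsto (\HMp-\lax)^{-1}$ is operator-monotone increasing on the relevant interval: if $\lax_1 < \lax_2 < \ka_M$ then $\HMp - \lax_1 \ge \HMp - \lax_2 > 0$, so $(\HMp-\lax_1)^{-1} \le (\HMp-\lax_2)^{-1}$, and sandwiching with $A$ on both sides preserves the inequality, giving $\UM(\lax_1) \ge \UM(\lax_2)$, i.e. $\UM$ is monotonically decreasing. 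For (iii), as $\lax \to -\infty$ one has $\|(\HMp-\lax)^{-1}\|_{\Ran\PMp} \le (\ka_M - \lax)^{-1} \to 0$; combined with boundedness of the outer factors (Proposition~\ref{prop:UN-well-defined}), $\|\UM(\lax)\| \le \|A\|^2 (\ka_M-\lax)^{-1} \to 0$. For complex $\lax$ with $\re\lax < \ka_M$, property (iv) analyticity follows because $\lax \mapsto (\HMp-\lax)^{-1}$ is an operator-valued analytic function on the resolvent set (norm-convergent Neumann series around any base point), and composition/multiplication with the fixed bounded operators $A, A^*$ preserves analyticity; alternatively one differentiates the Neumann/resolvent identity termwise. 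Finally the symmetry statement in (v) for complex $\lax$ should be read as $\UM(\lax)^* = \UM(\bar\lax)$ — the "reality" symmetry of the analytic family — which follows from $((\HMp-\lax)^{-1})^* = (\HMp - \bar\lax)^{-1}$ since $\HMp$ is self-adjoint, together with $A^{**} = A$.

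The only genuine subtlety — and the one place I would be careful rather than routine — is making the sandwiching arguments for (i) and (ii) rigorous when $V$ is only form-bounded in the regularized sense of Assumption~\ref{as:pot}, since then $A = \PMp V \PM$ maps between different Sobolev scales and the quadratic form $\langle \varphi_M, \UM(\lax)\varphi_M\rangle = -\langle A\varphi_M, (\HMp-\lax)^{-1} A\varphi_M\rangle$ must be interpreted through the duality pairing $\Hs{-1+r}$–$\Hs{1-r}$ rather than the $\Lt$ inner product. However, since $\PM$ has finite rank, $\XM \subset \Hs{1-r}$ with all norms on $\XM$ equivalent, and $\UM(\lax)$ is by Proposition~\ref{prop:UN-well-defined} a genuine bounded operator on the finite-dimensional space $\XM$; one then checks that the operator-monotonicity inequality, originally an inequality of quadratic forms on $\Ran\PMp$ in the appropriate interpolation scale, restricts correctly to the finite-dimensional quadratic form on $\XM$ after conjugation by the bounded map $A$. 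Everything else is a direct transcription of standard facts about resolvents of bounded-below self-adjoint operators.
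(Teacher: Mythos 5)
Your proposal is correct and takes the same route the paper does: all five properties are read off directly from the formula $\UM(\lax) = -A^*(\HMp-\lax)^{-1}A$ with $A=\PMp V\PM$, using the lower bound $\HMp\ge\ka_M$ from Lemma~\ref{lem:Hperp-low-bnd} and self-adjointness of $\Hcal$ for (v). The paper's own proof is essentially a one-line pointer to the definition and Lemma~\ref{lem:Hperp-low-bnd}; you have simply spelled out the verifications, and you correctly flag the one technical point (that $A$ is not a bounded $\Lt$-operator when $r<1$, so the norm estimate in (iii) must be done through the weighted Sobolev factorization as in Lemma~\ref{lem:UNK-bnd}).
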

 \begin{proof} 
 Properties (i)-(iv) follow directly from definition  \eqref{UNlam-def} 
  and Lemma \ref{lem:Hperp-low-bnd} above. 
For the last one, we use that $\Hcal$ is self-adjoint.
\end{proof}
 
\begin{prop}
\label{prop:nu-fp-sol} 
Denote by $\nuMi(\lax)$ the $i$-th eigenvalue of $\HMl$
\new{ and assume that the $i$-th eigenvalue of $\Hcal$ is less than $\ka_M$.
Then, the equation $\nuMi(\lax)=\lax$ has a unique solution in the interval $(-\infty,\ka_M)$.
}
\old{. Then the equation $\nuMi(\lax)=\lax$ has a unique solution on the interval $(-\infty,\ka_M)$.}
\end{prop}
 \begin{proof} Since, for $\lax <  \ka_M$, $\UM(\lax)$ is symmetric, the operator $\HMl$ defined by \eqref{HNlam-def} is (a) self-adjoint, (b) monotonically decreasing with $\lam$, (c) converging to $\HM $ as $\lax\ra-\infty$, (d) is complex analytic in $\lax$ for $\re\lax <  \ka_M$.
We deduce from (b) that the functions $\nuMi$ are decreasing on $(-\infty,\ka_M)$ and thus, if the $i$-th eigenvalue of $\Hcal$ is less than $\ka_M$, the equation $\nuMi(\lax)=\lax$ has a unique solution \new{(that equals the $i$-th eigenvalue of $\Hcal$).}
\end{proof}
Note also that $\lim_{\lax \rightarrow -\infty} \nuMi(\lax)$ is the $i$-th eigenvalue of $\HM$ which is larger than the $i$-th eigenvalue of $\Hcal$ due to the variational principle.

These considerations motivate the numerical strategies to compute solutions to~\eqref{EVPNMK} in the following section.

\subsection{Numerical strategy}
In order to find solutions to the non-linear eigenvalue problem~\eqref{EVPNMK}, we propose two strategies:

\textit{Strategy 1:} 
For a fixed index $i=1,\ldots,M$, consider the sequence of iterates $\lax_{\sigma }^{(j)}$ obtained by 
\begin{align}
    \label{EVPNMKk}
	\lax_{\sigma }^{(j)}: \text{ is the $i$-th eigenvalue of }\Hsi(\laxs^{(j-1)}).
\end{align}
We thus introduce the notation $\nusi(\lax)$ denoting the $i$-th eigenvalue (counting multiplicities) of the Hamiltonian $\Hsi(\lax)$ and thus have $\lax_{\sigma }^{(j)}= \nusi(\laxs^{(j-1)})$.  
The limit value $\lax_{\sigma }:=\lim_{j\to \infty}\lax_{\sigma }^{(j)}$ then satisfies $\lax_{\sigma } = \nusi(\laxs)$ and thus~\eqref{EVPNMK}. 

\textit{Strategy 2:}
For a given target value $\lat \in \R$, consider the sequence of iterates $\lax_{\sigma }^{(j)}$ obtained by 
\begin{align}
    \label{EVPNMKk-2}
	\lax_{\sigma }^{(j)}: \text{ is the eigenvalue of }\Hsi(\laxs^{(j-1)})\text{ closest to }\lat.
\end{align}
We thus introduce the notation $\nu_{\sigma {\sf t}}(\lax)$ denoting the eigenvalue of the Hamiltonian $\Hsi(\lax)$ closest to $\lat$ and thus have $\lax_{\sigma }^{(j)}= \nu_{\sigma {\sf t}}(\laxs^{(j-1)})$.  
The limit value $\lax_{\sigma }:=\lim_{j\to \infty}\lax_{\sigma }^{(j)}$ then satisfies $\lax_{\sigma } = \nu_{\sigma {\sf t}}(\laxs)$ and thus~\eqref{EVPNMK}. 

\medskip

In both cases, as outlined in the upcoming Remark~\ref{rem:ConvFP}, convergence of the fixed-point maps~\eqref{EVPNMKk} and \eqref{EVPNMKk-2} can be guaranteed under some conditions and for \new{$N,M$} large enough. 

\begin{remark}
    \new{This numerical strategy can be easily extended for eigenvalues with multiplicity higher than one. Indeed, using Proposition~\ref{prop:nu-fp-sol}, provided that the sought eigenvalue is less than $\ka_M$, one can iteratively compute a given number of the lowest eigenvalues of the operator $\Hsi({\laxs})$, with ${\laxs}$ updated at each iteration, matching a close value of the multiple eigenvalue. At convergence, this eigenvalue and the multiplicity will match the exact one, up to an error given in Theorem~\ref{thm:main}.
    Regarding bands of eigenvalues, one could as well iteratively compute a given number of the lowest eigenvalues of the operators $\Hsi({\laxs}_i)$, with the ${\laxs}_i$ updated at each iteration, but this multiplies the number of problems that have to be solved by the number of eigenvalues in the band. To do it in an efficient manner would require to 
    consider a density-matrix formalism, in order to consider all interesting eigenvectors together, but this goes beyond the scope of the present paper.
    }
\end{remark}
\blue
\noindent
The complexity of the numerical method can be summarized as follows:
\begin{itemize}
\item[(1)] We assume that the resolution of the non-linearity with the iterative scheme \eqref{EVPNMKk} or \eqref{EVPNMKk-2} requires $J$ iterations.
\item[(2)] At each iteration $j=1,\ldots,J$, one requires the resolution of an eigenvalue problem with $O(M^d)$ degrees of freedom and we assume that this problem is solved with an iterative solver using $J_{\rm evp}^M$ matrix-vector multiplications.
\item[(3)] The computation of one matrix-vector product corresponding to the Hamiltonian $\HM + \UM(\lam)= -\PM \Delta\PM + \PM V \PM + \UM(\lam) $ contains three parts. \\
(i) the matrix corresponding to $\PM \Delta\PM$ is diagonal in the Fourier basis; (ii) the application of $\PM V \PM$ can be effected using the Fast Fourier Transform (FFT) on the  grid $\mathsf X_{M}$ and scales as $M^d\log(M^d)$;\\
(iii) the matrix-vector product corresponding with the application of the multiplicative potential $U_\sigma(\lambda)$.
For convenience we recall here its definition:
\[
	\Us(\lax)
	=
	-\PM V \PNM
	 \left(\sum_{k=0}^K (-1)^{k}\Big[ \GNM(\lax)\VMN \Big]^k \GNM(\lax)\right) \PNM V \PM,
\]
with $\GNM(\lax) = ( -\Delta - \lax)|_{\Ran \PNM}^{-1}$. 
The application of $\GNM(\lax)$ scales as $N^d-M^d$ and the application of the potentials $\PM V \PNM$, $\VMN$ and $\PNM V \PM$ can be performed again using the FFT on the entire grid $\mathsf X_{N}$ and it scales as $N^d \log(N^d)$.
\end{itemize}
The overall complexity is thus proportional to
\[
    J \times J_{\rm evp}^M \times \Big( M^d + M^d \log(M^d) + (K+3) N^d \log(N^d) + (K+2) (N^d-M^d) \Big)
\]
In contrast, following the same notation, the variational problem requires a complexity of 
\[
    J_{\rm evp}^N \times  N^d \log(N^d)
\]
Therefore in order to compare the methods,  it depends on $J \times J_{\rm evp}^M(K+3)$ versus $J_{\rm evp}^N$ at comparable accuracy.

Finally, we emphasize that the focus of the present paper is to present the numerical analysis of this new method, which covers cases where the analysis of the standard variational approximation does not hold and efficiency becomes a secondary factor.
\black

\section{Perturbation estimates}\label{sec:pert-est}
In this article, we often deal with the following eigenvalue  perturbation problem: 
Given an operator $H$ on a Hilbert space $X$ of the form
\begin{equation}
\label{eq:pertform}
  H = H_0 +  W,
\end{equation}
where $H_0$ is an operator with some isolated eigenvalues and $W$ is small in an appropriate norm,  show that $H$ has eigenvalues near those of $H_0$ and estimate these eigenvalues and the corresponding eigenvectors.
We therefore start by presenting an abstract theory which will be applied to our concrete problem in the following sections.

Specifically, we assume that $H$ and $H_0$ are self-adjoint and bounded from below and that $W$ is 
{\it $\aaa$-form-bounded} w.r.t. of $H_0$, in the sense that for  $\aaa\in\R$ such that $H_0+\aaa$ is a positive operator ($H_0 +\aaa> 0$), we have
\begin{align} \label{H0a-norm} 
\| W  \|_{\! H_0,\alpha} := \|(H_0+\alpha)^{-1/2} W (H_0+\alpha)^{-1/2}\|<\infty,
\end{align}
where 
$(H_0+\aaa)^{-s}, s>0,$ is defined either by the spectral theory or by the explicit formula \[(H_0+\aaa)^{-s}:=c_\aaa \int_0^\infty (H_0+\aaa+\om)^{-1} d\om/\om^s,\] where $c_\aaa:=[\int_0^\infty (\aaa+\om)^{-1} d\om/\om^s]^{-1}$. 
This notion is equivalent to that of the relative form-boundedness, but it gives an important quantification of the latter. 

We also note here that, by a known result about relatively form-bounded operators (see e.g.~\cite{RSII, HS}),  if $H_0$ is a self-adjoint, bounded below operator on $X$ and  $W$ is symmetric and  $\aaa$-form-bounded w.r.t. of $H_0$, then  $H = H_0 + W$ is self-adjoint.

We start with a general result on the eigenvalue difference.
\begin{prop}
\label{prop:eigenvalue-estimate}
    Let $H_0$ be a self-adjoint bounded below operator on $X$ and  $W$  symmetric and 
 $\aaa$-form-bounded w.r.t. of $H_0$, and let  $H = H_0 + W$.
Let $\alpha\in\R$ be such that  $H_0+\aaa>0$.
 Then \old{$H$} the eigenvalues of $H$ and $H_0$ satisfy the estimates 
\begin{align}
    \label{lami-lam0-est}
     |\nu_{i}(H) - \nu_i(H_0)| \, &\le (\nu_i(H_0) + \alpha) \| W  \|_{\! H_0,\aaa},
\end{align}
where $\nu_{ i}(A)$ denotes the $i$-th eigenvalue of the operator $A$.
\end{prop}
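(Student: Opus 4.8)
The plan is to compare the eigenvalues of $H$ and $H_0$ through a min-max argument after conjugating by $(H_0+\aaa)^{1/2}$. First I would record the elementary fact that, for $\aaa$ as in the statement, the form domain of $H$ coincides with that of $H_0$, so that the Rayleigh quotients
\[
    \frac{\langle u, (H_0+\aaa) u\rangle}{\|u\|^2}
    \qquad\text{and}\qquad
    \frac{\langle u, (H+\aaa) u\rangle}{\|u\|^2}
\]
are both defined on the same space and, by the Courant--Fischer theorem together with the discreteness of the spectra, compute $\nu_i(H_0)+\aaa$ and $\nu_i(H)+\aaa$ respectively. The goal is then to control the multiplicative distortion between these two quadratic forms.

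The key step is the two-sided form bound: writing $T := (H_0+\aaa)^{-1/2} W (H_0+\aaa)^{-1/2}$, which by \eqref{H0a-norm} is a bounded self-adjoint operator with $\|T\| = \|W\|_{H_0,\aaa}$, one has for every $u$ in the form domain, with $v := (H_0+\aaa)^{1/2} u$,
\[
    \langle u, (H+\aaa) u\rangle
    = \langle v, v\rangle + \langle v, T v\rangle
    = \langle v, (\one + T) v\rangle,
\]
so that
\[
    (1-\|W\|_{H_0,\aaa})\,\langle u,(H_0+\aaa)u\rangle
    \;\le\; \langle u,(H+\aaa)u\rangle
    \;\le\; (1+\|W\|_{H_0,\aaa})\,\langle u,(H_0+\aaa)u\rangle .
\]
Feeding this pair of inequalities into the min-max characterization over $i$-dimensional subspaces (both bounds preserving the ordering since $H_0+\aaa>0$) yields
\[
    (1-\|W\|_{H_0,\aaa})\,(\nu_i(H_0)+\aaa)
    \;\le\; \nu_i(H)+\aaa
    \;\le\; (1+\|W\|_{H_0,\aaa})\,(\nu_i(H_0)+\aaa),
\]
which is exactly $|\nu_i(H)-\nu_i(H_0)| \le (\nu_i(H_0)+\aaa)\,\|W\|_{H_0,\aaa}$ after subtracting $\nu_i(H_0)+\aaa$ and using that this quantity is positive.

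The main obstacle, such as it is, is bookkeeping rather than conceptual: one must make sure that the min-max principle applies, i.e. that below the essential spectrum (or, here, with purely discrete spectrum) the $i$-th min-max value genuinely equals the $i$-th eigenvalue, and that conjugation by $(H_0+\aaa)^{1/2}$ legitimately identifies the form domains of $H_0$ and $H$ — both of which are standard consequences of $\aaa$-form-boundedness and the KLMN theorem, already invoked in the text just before the proposition. I would also note that the argument gives the ordering-preserving two-sided estimate "for free," which is slightly stronger than the absolute-value statement claimed, and costs nothing extra.
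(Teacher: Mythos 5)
Your argument is correct and is essentially the paper's own proof: both bound the quadratic form of $H$ between $(1\pm\|W\|_{H_0,\aaa})$-distortions of that of $H_0$ (via the substitution $v=(H_0+\aaa)^{1/2}u$) and then apply the Courant--Fischer min-max principle. The only cosmetic difference is that you shift by $\aaa$ from the outset and compare $H+\aaa$ with $H_0+\aaa$, whereas the paper keeps the $\alpha\|u\|^2\|W\|_{H_0,\aaa}$ term separate; the resulting two-sided estimate is identical.
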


\begin{proof}
Let $u\in X$ be arbitrary and define $v=\Gza^{1/2}u$ noting that $\Gza >0$.
Then, 
\begin{align*}
    \langle u, H u \rangle
    &=
     \langle u, H_{0}  u\rangle
    +  \langle v, \Gza^{-1/2} W \Gza^{-1/2} v\rangle.
 \end{align*}
Note that
\[
    \langle v, \Gza^{-1/2} W \Gza^{-1/2} v\rangle
    \le \| W  \|_{\! H_0,\aaa} \langle v, v \rangle
    = \| W  \|_{\! H_0,\aaa} \langle u, \Gza u \rangle,
\]
and therefore
\begin{align*}
     \langle u, H_0  u\rangle \left(1  - \| W  \|_{\!H_0,\aaa}\right)& - \alpha \| u\|^2\| W  \|_{\!H_0,\aaa}
     \le 
   \langle u, H u\rangle\\
   &\le 
     \langle u, H_0  u\rangle \left(1 + \| W  \|_{\!H_0,\aaa} \right) + \alpha \| u\|^2\| W  \|_{\!H_0,\aaa}.
\end{align*}
Using the min-max principle (Courant--Fisher), there holds
\[
    \nu_i(H_0) \left(1  - \| W  \|_{\!H_0,\aaa}\right) - \alpha\| W  \|_{\!H_0,\aaa}
    \le 
    \nu_i(H)
    \le 
    \nu_i(H_0) \left(1 + \| W  \|_{\!H_0,\aaa} \right) +\alpha \| W  \|_{\!H_0,\aaa},
\]
which leads to the result.
\end{proof}

Let us now assume that $\lam_{0 }$ is an isolated eigenvalue of $H_0$ of finite multiplicity $m$ and
let $\Pz$ be the orthogonal projection onto the span of the the eigenfunctions 
of $H_0$ corresponding to the eigenvalue $\lam_{0 }$, and let $\oP := \one - \Pz$. 
We further introduce $\Hzl:=H_0^\perp-\lax \oP$ and thus $\Hza=H_0^\perp+\aaa\oP$.

Let $\g_0$ denote the gap of $\lax_0$ to its closest eigenvalue in the remaining spectrum of $H_0$ and we introduce the spectral interval $\I= [\lax_{0 }- \frac12 \g_0,  \lax_{0 }+ \frac12 \g_0 ] $.
 
 Our next result gives estimates on the difference of eigenvectors of $H$ and $H_0$, as well as on the difference of their corresponding eigenvalues.  
For standard approaches to the spectral perturbation theory, see \cite{Re, Kato1976-hm, RSIV, HS}.

\begin{thm}  \label{thm:FS-pert-var}
  Let $H_0$ be a self-adjoint bounded below operator on $X$, with the eigenvalue $\lam_{0 }$ as above, and  $W$  symmetric and 
 $\aaa$-form-bounded w.r.t. $H_0$, and let  $H = H_0 + W$.
 Let $\alpha\in\R$ be such that  $H_0+\aaa>0$.
If $ \| W  \|_{\!H_0,\aaa} \le \frac{1}{2} \frac{\g_0}{\lax_0+\aaa}$, then
the self-adjoint operator $H$ has  exactly $m$ eigenvalues (counting the multiplicities), denoted by $\mu_i$, in the interval $\I= [\lax_{0 }- \frac12 \g_0,  \lax_{0 }+ \frac12 \g_0 ]$ which satisfy
\begin{equation}
\label{eq:eigenvalue_est}
    |\mu_{ i}-\lax_{0 } | 
    \le  
    (\lax_{0 }+ \aaa)  \|W \|_{H_0, \aaa}
    \le 
    \frac12 \g_0.
\end{equation}
Further, if $ \| W  \|_{\!H_0,\aaa} \le \frac{1}{4} \frac{\g_0}{\lac}$, then  any normalized eigenfunction, $\psi_i$, of $H$ for the eigenvalue $\mu_i$ satisfies the estimates 
\begin{align}
   \| \Hza^{1/2} (\psi_{0i} - \psi_i) \|
    & \le 4 \frac{\lac}{\g_0} (\lax_0+\alpha)^{1/2}  \| W   \|_{\!H_0,\aaa}, \label{eq:est-eigenvectors1}
    \\
   \| \psi_{0i} - \psi_i \|
    & \le 4 \frac{\lac}{\g_0} \| W   \|_{\!H_0,\aaa}, \label{eq:est-eigenvectors2}
\end{align}
where $\lac=\lax_0 + \alpha+\g_0$ and $\psi_{0i}$ is an appropriate eigenfunction of $H_0$ corresponding to the eigenvalue $\lam_0$, namely $\psi_{0i} := \Pz \psi_i$. 
\end{thm}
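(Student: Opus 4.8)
The plan is to use the Feshbach--Schur isospectrality (Theorem~\ref{thm:isospF}) with $P=\Pz$ to reduce everything to the finite-dimensional eigenvalue problem $F_{\Pz}(H-\lax)\varphi = 0$ on $\Ran\Pz$, then transfer the already-established eigenvalue bound of Proposition~\ref{prop:eigenvalue-estimate} and analyze the lifting map $Q_{\Pz}(\lax)$ to control the eigenvectors. First I would verify that the hypotheses of Theorem~\ref{thm:isospF} are met: since $H_0+\aaa>0$ and $\g_0$ is the gap of $\lax_0$, on $\Ran\oP$ one has $\Hza = H_0^\perp+\aaa\oP \ge \aaa + \min(\lax_0-\g_0,\lax_0+\g_0)\,\text{-ish}$, so for $\lax$ in the interval $\I$ the operator $H_0^\perp-\lax$ is invertible on $\Ran\oP$ with $\|(H_0^\perp-\lax)^{-1}\| \lesssim \g_0^{-1}$; combined with $\aaa$-form-boundedness of $W$ this shows $W^\perp := \oP W\oP$ is a small perturbation so that $H^\perp-\lax$ is also invertible on $\Ran\oP$ and $F_{\Pz}(H-\lax)$ is a well-defined bounded (finite-rank) operator. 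By Corollary~\ref{cor:nuFP}, the eigenvalues $\mu_i$ of $H$ in $\I$ are in bijection with solutions of $\nu_i(\lax)=\lax$, where $\nu_i(\lax)$ is the $i$-th eigenvalue of $F_{\Pz}(H-\lax)+\lax\Pz$.

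Next I would prove the eigenvalue count and estimate~\eqref{eq:eigenvalue_est}. The bound $\| W\|_{H_0,\aaa} \le \tfrac12\,\g_0/(\lax_0+\aaa)$ feeds directly into Proposition~\ref{prop:eigenvalue-estimate} applied to $H=H_0+W$: it gives $|\nu_i(H)-\lax_0| \le (\lax_0+\aaa)\|W\|_{H_0,\aaa} \le \tfrac12\g_0$ for each of the $m$ eigenvalues $\nu_i(H_0)=\lax_0$, hence $H$ has at least $m$ eigenvalues in $\I$. For the reverse (no extra eigenvalues in $\I$), the same proposition applied to the eigenvalues adjacent to the cluster shows they are moved by at most $(\lax_0\pm\g_0+\aaa)\|W\|_{H_0,\aaa}$; with the smallness hypothesis one checks this keeps them outside the open interval $(\lax_0-\tfrac12\g_0,\lax_0+\tfrac12\g_0)$, so exactly $m$ eigenvalues $\mu_i$ lie in $\I$, and~\eqref{eq:eigenvalue_est} is just the proposition's bound again.

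For the eigenvector estimates I would exploit the reconstruction formula $\psi_i = Q_{\Pz}(\mu_i)\varphi_i$ with $\varphi_i=\Pz\psi_i=\psi_{0i}$, so that
\[
    \psi_i - \psi_{0i} = \big(Q_{\Pz}(\mu_i)-\Pz\big)\psi_{0i} = -\,\oP\,(H^\perp-\mu_i)^{-1}\oP\, W\,\Pz\,\psi_{0i}.
\]
To bound this I would write $(H^\perp-\mu_i)^{-1} = (\Hza)^{-1/2}\big(\one - (\Hza)^{-1/2}(W^\perp-(\mu_i+\aaa)\oP)(\Hza)^{-1/2}\big)^{-1}(\Hza)^{-1/2}$ on $\Ran\oP$, estimate the middle Neumann-type factor by a geometric series using $\|(\Hza)^{-1/2}W^\perp(\Hza)^{-1/2}\|\le\|W\|_{H_0,\aaa}$ and $\|(\Hza)^{-1/2}(\mu_i+\aaa)\oP(\Hza)^{-1/2}\| \le (\mu_i+\aaa)/\dist(\lax_0,\sigma(H_0^\perp))$, the latter controlled by $(\lax_0+\aaa+\tfrac12\g_0)/\g_0 \lesssim \lac/\g_0$; the smallness hypothesis $\|W\|_{H_0,\aaa}\le\tfrac14\g_0/\lac$ guarantees the series converges with a factor $\le 2$ or so. Finally $\|W\Pz\psi_{0i}\| = \|(\Hza)^{1/2}(\Hza)^{-1/2}W(H_0+\aaa)^{-1/2}(H_0+\aaa)^{1/2}\Pz\psi_{0i}\|$ is bounded using $\|(H_0+\aaa)^{1/2}\Pz\psi_{0i}\| = (\lax_0+\aaa)^{1/2}$ and the $\aaa$-form-bound; applying $(\Hza)^{1/2}$ on the left of the displayed identity (resp. not) yields~\eqref{eq:est-eigenvectors1} (resp.~\eqref{eq:est-eigenvectors2}) after collecting the constants $\lac/\g_0$, $(\lax_0+\aaa)^{1/2}$ and $\|W\|_{H_0,\aaa}$. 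The main obstacle I expect is bookkeeping the two different operator-norm estimates on $(\Hza)^{-1/2}$ sandwiches — one from $\aaa$-form-boundedness, one from the spectral gap of $H_0^\perp$ — and making sure the resulting numerical constants collapse cleanly into the stated "$4\lac/\g_0$" after the Neumann-series bound; this is where care with the hypothesis $\|W\|_{H_0,\aaa}\le\tfrac14\g_0/\lac$ is essential.
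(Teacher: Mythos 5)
Your overall architecture matches the paper's: Proposition~\ref{prop:eigenvalue-estimate} for the eigenvalue estimate, the Feshbach--Schur lifting $\psi_i = Q_{\Pz}(\mu_i)\Pz\psi_i$ for the eigenvector, and a factorization of $(H^\perp - \mu_i)^{-1}$ through $\Hza^{\pm 1/2}$. But two steps contain genuine gaps.

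First, your argument that \emph{exactly} $m$ eigenvalues lie in $\I$ does not close under the stated hypothesis. You apply Proposition~\ref{prop:eigenvalue-estimate} to the eigenvalues adjacent to the cluster, but the nearest one, at $\lax_0+\g_0$, is shifted by up to $(\lax_0+\g_0+\aaa)\|W\|_{H_0,\aaa}=\lac\|W\|_{H_0,\aaa}$; under $\|W\|_{H_0,\aaa}\le\tfrac12\g_0/(\lax_0+\aaa)$ this is only bounded by $\tfrac{\g_0}{2}\cdot\tfrac{\lac}{\lax_0+\aaa}\ge\tfrac{\g_0}{2}$, so the adjacent eigenvalue can in principle land inside $\I$ and the ``exactly $m$'' is not established this way. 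The paper bypasses this entirely: $F_{\Pz}(H-\lax)+\lax\Pz$ is an $m\times m$ self-adjoint matrix family whose $m$ eigenvalue branches $\nu_i(\lax)$ are monotone decreasing (Proposition~\ref{prop:H-evs}(ii)--(iii)), so each crosses the diagonal at most once, and Corollary~\ref{cor:nuFP} then caps the number of eigenvalues of $H$ in $\I$ at $m$; Proposition~\ref{prop:eigenvalue-estimate} supplies the lower bound $\ge m$.

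Second, in the eigenvector estimate the required bound is $\|\Hza^{1/2}R^\perp(\mu_i)\Hza^{1/2}\|\le 4\lac/\g_0$ (the paper's \eqref{ResPerp-bnd2}), and your geometric-series treatment does not justify it. Writing $H^\perp-\mu_i=\Hza^{1/2}\bigl[\one\oP-(\mu_i+\aaa)\Hza^{-1}\oP+\Hza^{-1/2}W^\perp\Hza^{-1/2}\bigr]\Hza^{1/2}$, the middle term $(\mu_i+\aaa)\Hza^{-1}\oP$ has operator norm of order one -- its eigenvalues $(\mu_i+\aaa)/(\lax_{0j}+\aaa)$ can be close to or exceed $1$ when $\lax_{0j}$ is near $\lax_0$ -- so it cannot be absorbed into a Neumann series alongside $W^\perp$. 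The missing ingredient is Lemma~\ref{lem:H-lam-alph2}: a direct spectral computation gives the lower bound $|\one\oP-(\lax+\aaa)\Hza^{-1}\oP|\ge\tfrac{\g_0}{2\lac}$ for $\lax\in\I$, so this factor is invertible with inverse of norm $\le 2\lac/\g_0$; only $\Hza^{-1/2}W^\perp\Hza^{-1/2}$ is then treated perturbatively, and $\|W\|_{H_0,\aaa}\le\tfrac14\g_0/\lac$ makes the relative norm $\le\tfrac12$, yielding the constant $4\lac/\g_0$. A minor point: do not isolate $\|W\Pz\psi_{0i}\|$ as a standalone quantity -- $W$ is only form-bounded, so keep the sandwich together as $\|\Hza^{1/2}R^\perp(\mu_i)\Hza^{1/2}\|\cdot\|\oP W\Pz\|_{H_0,\aaa}\cdot\|\Pz\Hza^{1/2}\|$, as in the paper's proof.
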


\begin{remark}
    We note that similar estimates can be obtained for normalized eigenfunctions $\widetilde\psi_{0i} := \Pz \psi_i/\|\Pz \psi_i\|$ with an additional factor 2 using the estimate
    \begin{align*}
        \|\widetilde\psi_{0i} - \psi_i \|
        &\le  \big|1-\|\psi_{0i}\|\big| + \|\psi_{0i} - \psi_i \|
        \le  \big|\|\psi_i\|-\|\psi_{0i}\|\big| + \|\psi_{0i} - \psi_i \|
        \le 2\, \|\psi_{0i} - \psi_i \|.
    \end{align*}
 \end{remark}

We first develop the following preliminary results.  

\begin{lem}
\label{lem:H-lam-alph2}
Let $\alpha\in\R$ be such that  $H_0+\aaa>0$ and $\lac:=\lax_{0 }+\g_{0 }+ \aaa$.
Then, for all $\lax\in I_0^c:=\{z\in \C: \re z\in I_0\}$,
there holds $|\oP - (\lax+\alpha)  \Hza^{-1} \oP| \ge \frac{\gam_0}{2\lac}$.
\end{lem}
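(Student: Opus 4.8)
The plan is to work on the range of $\oP$, where $\Hza = H_0^\perp + \aaa\oP$ is a self-adjoint operator whose spectrum is contained in $[\lax_0+\g_0+\aaa,\infty) = [\lac,\infty)$, since on $\Ran\oP$ the operator $H_0$ has spectrum at distance at least $\g_0$ above $\lax_0$, i.e. spectrum in $[\lax_0+\g_0,\infty)$. First I would use the spectral theorem for $\Hza$ restricted to $\Ran\oP$: for $\lax\in I_0^c$, the operator $\one\oP - (\lax+\aaa)\Hza^{-1}\oP$ acts on a spectral value $t\in[\lac,\infty)$ of $\Hza$ by multiplication by the scalar $1 - (\lax+\aaa)/t = (t-\lax-\aaa)/t$. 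Hence its norm-inverse is controlled by $\inf_{t\ge\lac}|t-\lax-\aaa|/t$, and the claimed bound reduces to showing
\[
    \inf_{t\ge \lac} \frac{|t-\lax-\aaa|}{t} \ge \frac{\g_0}{2\lac}.
\]

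For the elementary estimate, write $\lax = x + iy$ with $x\in I_0$, so $x\le \lax_0+\tfrac12\g_0$ and therefore $\re(\lax+\aaa) = x+\aaa \le \lax_0+\aaa+\tfrac12\g_0 = \lac - \tfrac12\g_0$. For $t\ge\lac$ we then have $t - \re(\lax+\aaa) \ge \lac - (\lac-\tfrac12\g_0) = \tfrac12\g_0 > 0$, so $|t-\lax-\aaa| \ge t-\re(\lax+\aaa) \ge \tfrac12\g_0$. Dividing by $t$ and using $t\ge\lac$ would be the wrong direction; instead I note that the function $t\mapsto |t-\lax-\aaa|/t = |1 - (\lax+\aaa)/t|$ and split into two cases: if $t\ge 2\lac$ is large the ratio is close to $1\ge \g_0/(2\lac)$ (using $\g_0\le\g_0+\lax_0+\aaa = \lac$ so $\g_0/(2\lac)\le\tfrac12$), while for $\lac\le t\le 2\lac$ we have $|t-\lax-\aaa|/t \ge (\tfrac12\g_0)/(2\lac) = \g_0/(4\lac)$. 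A cleaner route, which I would actually carry out, is: for all $t\ge\lac$, $|t-\lax-\aaa|/t \ge (t-\re(\lax+\aaa))/t = 1 - \re(\lax+\aaa)/t$; if $\re(\lax+\aaa)\le 0$ this is $\ge 1\ge\g_0/(2\lac)$, and if $\re(\lax+\aaa)>0$ then since $t\ge\lac$ and $\re(\lax+\aaa)\le\lac-\tfrac12\g_0$ we get $1-\re(\lax+\aaa)/t \ge 1 - (\lac-\tfrac12\g_0)/\lac = \g_0/(2\lac)$, which is exactly the desired bound. (The factor $2$ in the statement absorbs any slack and handles the case where $I_0$ is approached from above.)

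The main obstacle is essentially bookkeeping rather than mathematical depth: one must be careful that $\Hza^{-1}$ is applied only on $\Ran\oP$ — the stated inequality $|\one\oP - (\lax+\aaa)\Hza^{-1}\oP| \ge \tfrac{\g_0}{2\lac}$ is to be read as a lower bound on the restricted operator, i.e. $\|(\one\oP - (\lax+\aaa)\Hza^{-1}\oP)u\| \ge \tfrac{\g_0}{2\lac}\|u\|$ for $u\in\Ran\oP$, equivalently an upper bound $\|(\one\oP - (\lax+\aaa)\Hza^{-1}\oP)^{-1}\|_{\Ran\oP} \le \tfrac{2\lac}{\g_0}$. Once the spectral-calculus reduction is in place the remaining scalar inequality is immediate from the case analysis above, using only $x\le\lax_0+\tfrac12\g_0$ and $\operatorname{spec}(\Hza|_{\Ran\oP})\subset[\lac,\infty)$.
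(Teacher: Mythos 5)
Your approach is the same as the paper's --- reduce, via the spectral theorem for $\Hza$ on $\Ran\oP$, to the scalar estimate $\inf_t\,|t-\lax-\aaa|/t \ge \g_0/(2\lac)$ over the spectrum of $\Hza|_{\Ran\oP}$ --- but there is a genuine gap in how you identify that spectrum. You assert that $\sigma(\Hza|_{\Ran\oP})\subset[\lac,\infty)$ because ``on $\Ran\oP$ the operator $H_0$ has spectrum at distance at least $\g_0$ \emph{above} $\lax_0$.'' That is only true when $\lax_0$ is the smallest eigenvalue of $H_0$. In the lemma's setting $\lax_0$ is an arbitrary isolated eigenvalue, and $\g_0$ is the two-sided gap; the correct inclusion is
\[
   \sigma\bigl(\Hza|_{\Ran\oP}\bigr)
   \subset (0,\;\lac-2\g_0\,]\;\cup\;[\lac,\infty),
\]
the lower piece coming from eigenvalues $\lam_{0i}\le\lax_0-\g_0$ of $H_0$ (bounded below by $-\aaa$ since $H_0+\aaa>0$). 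Your case analysis only treats $t\ge\lac$ and therefore does not prove the stated bound. This is not a cosmetic issue in the paper: the abstract theory and the numerics (e.g.\ Figure~\ref{fig:convK_n3} for the third eigenvalue) use $\lax_0$ that is not at the bottom of the spectrum.

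The good news is the missing case goes through by the same one-line argument. For $0<t\le\lac-2\g_0$ and $\re\lax\ge\lax_0-\tfrac12\g_0$ one has $\re(\lax+\aaa)-t\ge(\lac-\tfrac32\g_0)-(\lac-2\g_0)=\tfrac12\g_0$, hence $|t-\lax-\aaa|\ge\tfrac12\g_0$, and since $t<\lac$ this gives $|t-\lax-\aaa|/t>\g_0/(2\lac)$. Adding this branch restores the proof; it is then essentially the same two-sided case analysis the paper carries out explicitly by minimizing $f(x)=|x-\lax_r|/(x+\aaa)$ over $K_\aaa=[-\aaa,\infty)\setminus(\lax_0-\g_0,\lax_0+\g_0)$ and checking both endpoints $f(\lax_0\mp\g_0)$.
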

\begin{proof}
The eigenvalues of $|\oP - (\lax+\alpha)  \Hza^{-1} \oP|$ on $\Ran \oP$ are 
\[
\left| 1 - \frac{\lax+\alpha}{\lam_{0i}+\alpha} \right|
= 
\left|\frac{\lam_{0i} - \lax}{\lam_{0i}+ \alpha}\right|,
\]
where $\lam_{0i}$ denotes the eigenvalues of $H_0$ and the index $i$ runs over all eigenvalues except $i$ such that $\lam_{0i} = \lam_0$. 
For $\lax\in I_0^c$, we write $\lax=\lax_r + \mbox{i} \lax_i$, with $\lax_r\in \I$, $\lax_i\in\mathbb R$.
Since for any $x\in\mathbb R$, $|x-\lax| \ge |x-\lax_r|$ we have thus to study the function 
 \[
    f(x) = \left|\frac{x-\lax}{x+\alpha} \right|, \qquad x\in K_\alpha := [-\alpha,+\infty)
    \setminus (\lax_0- \gamma_0,\lax_0+ \gamma_0),
 \]
 for $\lax \in \I$ in order to lower bound the eigenvalues.
\noindent 
 Since 
 \[
    f'(x) = \frac{x-\lax}{|x-\lax|}\cdot\frac{\aaa+\lax}{(x+\alpha)^2},
\]
if $\alpha + \lambda \le 0$, 
 there holds
$
    f'(x) < 0 \ \mbox{for } x>-\alpha
$
so that
 \[
    \min_{x\in K_\alpha} f(x) \ge 1.
 \]
 If $\aaa+\lax>0,$
 there holds
\[
    f'(x) < 0 \ \mbox{for } x<\lax,\qquad\qquad
    f'(x) > 0 \ \mbox{for } x>\lax,
\]
and thus, for $\lax\in I_0$,
\[
    \min_{x\in K_\alpha} f(x) 
    = \min\left(  f(\lax_0-{\g_0}), f(\lax_0+{\g_0}) \right)
    = \min \left( \frac{|\lax_0-\g_0-\lax|}{\lax_0-\g_0+\aaa},
    \frac{|\lax_0+\g_0-\lax|}{\lax_0+\g_0+\aaa} \right)
    \ge \frac12 \,\frac{\g_0}{\lac},
 \]
yielding the result.
\end{proof}

Denote  $H^\bot := \oP H \oP \restriction_{\Ran \oP}$ and $R^\perp(\lam):= \oP (H^\bot-\lambda)^{-1} \oP$. We have

\begin{lem}\label{lem:FSM-conds}
Let $\alpha\in\R$ be such that  $H_0+\aaa>0$ and $\lac:=\lax_{0 }+\g_{0 }+ \aaa$.
Let $ I_0^c:=\{z\in \C: \re z\in I_0\}$ and assume $\|W\|_{H_{0, \aaa}}\le \frac{1}{4}\frac{\g_0}{\lac}$. 
Then, for $\lam\in I_0^c$, the following statements hold
\begin{itemize} 
\item[(a)]  
The operator $H^\bot-\lambda$   is invertible on $\Ran \oP$;
\item[(b)]  
 The inverse $R^\perp(\lam):=\oP(H^\perp- \lax)^{-1}\oP$ defines a bounded, analytic operator-family;

\item[(c)] 
The expression \begin{equation}\label{Ulam}
U(\lambda):=- \Pz H R^\perp(\lam) H \Pz
\end{equation}
 defines a finite-rank, analytic operator-family and bounded as
\begin{align}\label{U-bnd}
	&\|U(\lax)\|_{H_{0, \aaa}}
	\le  
	4\,[\lac/\g_0]\|\Pz W \oP\|_{H_{0, \aaa}}^2
	\le  
	4\,[\lac/\g_0]\|W\|_{H_{0, \aaa}}^2. 
\end{align}
Further, $U(\lax)$ is symmetric for any $\lam\in I_0$.
\end{itemize} \end{lem}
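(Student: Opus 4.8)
The plan is to establish the three items in order, since each feeds into the next, using the resolvent identity to rewrite $H^\perp-\lambda$ as a perturbation of $\Hzl = H_0^\perp - \lambda\oP$ and the bound of Lemma~\ref{lem:H-lam-alph2} to control the latter. First, for item~(a), I would write, on $\Ran\oP$,
\[
    H^\perp - \lambda = \oP(H_0 + W)\oP - \lambda\oP
    = \Hza^{1/2}\left[\, \one\oP - (\lambda+\alpha)\Hza^{-1}\oP + \Hza^{-1/2} \oP W \oP \,\Hza^{-1/2}\,\right]\Hza^{1/2},
\]
where I used $\Hzl = H_0^\perp - \lambda\oP = \Hza - (\lambda+\alpha)\oP$ and symmetrically split $\Hza = \Hza^{1/2}\Hza^{1/2}$ (all operators restricted to $\Ran\oP$, where $\Hza > 0$ so the square roots make sense). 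The bracketed operator is invertible provided $\|\Hza^{-1/2}\oP W\oP\,\Hza^{-1/2}\| < |\,\one\oP - (\lambda+\alpha)\Hza^{-1}\oP\,|$; by Lemma~\ref{lem:H-lam-alph2} the right-hand side is at least $\g_0/(2\lac)$, while the left-hand side is bounded by $\|\oP W\oP\|_{H_0,\alpha} \le \|W\|_{H_0,\alpha} \le \frac14 \g_0/\lac < \frac12 \g_0/\lac$, so a Neumann-series argument gives invertibility of the bracket and hence of $H^\perp - \lambda$ on $\Ran\oP$.

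Second, for item~(b): analyticity is immediate because $\lambda\mapsto \Hza^{-1/2}\oP(H^\perp-\lambda)^{-1}\oP\,\Hza^{-1/2}$ equals the inverse of the bracketed operator above, which depends affinely (hence analytically) on $\lambda$ and stays invertible throughout $I_0^c$ by the estimate just obtained; inverses of analytic invertible operator-families are analytic. For the norm bound I would record the quantitative consequence of the Neumann series, namely
\[
    \| \Hza^{1/2} R^\perp(\lambda) \Hza^{1/2} \|
    \le \frac{1}{\,|\,\one\oP - (\lambda+\alpha)\Hza^{-1}\oP\,| - \|\oP W\oP\|_{H_0,\alpha}\,}
    \le \frac{1}{\tfrac{\g_0}{2\lac} - \tfrac14\tfrac{\g_0}{\lac}}
    = \frac{4\lac}{\g_0},
\]
so $R^\perp(\lambda)$ is bounded as an operator between the appropriate form spaces.

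Third, for item~(c): $U(\lambda) = -\Pz H R^\perp(\lambda) H \Pz = -\Pz H \oP R^\perp(\lambda)\oP H \Pz$ is finite-rank because $\Pz$ is finite-rank (multiplicity $m$), and analytic as a composition of the analytic family $R^\perp(\lambda)$ with fixed bounded operators. For the norm estimate I would insert $\Hza^{\pm1/2}$ factors:
\[
    \|U(\lambda)\|_{H_0,\alpha}
    = \big\| \Gza^{-1/2}\Pz H\oP\,\Hza^{-1/2}\cdot \Hza^{1/2}R^\perp(\lambda)\Hza^{1/2}\cdot \Hza^{-1/2}\oP H\Pz\,\Gza^{-1/2}\big\|
    \le \frac{4\lac}{\g_0}\,\|\Pz H\oP\|_{H_0,\alpha}^2,
\]
and then note $\Pz H\oP = \Pz(H_0+W)\oP = \Pz W\oP$ since $\Pz$ commutes with $H_0$ and $\Pz\oP = 0$, giving the first inequality in~\eqref{U-bnd}; the second is $\|\Pz W\oP\|_{H_0,\alpha} \le \|W\|_{H_0,\alpha}$ (compressions do not increase the norm, since $\Pz,\oP$ are $\Gza$-orthogonal projections). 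Symmetry of $U(\lambda)$ for real $\lambda \in I_0$ follows from self-adjointness of $H$ and $R^\perp(\lambda)$ (the latter because $H^\perp - \lambda$ is self-adjoint for real $\lambda$). The main obstacle is bookkeeping the form-space factors cleanly—making sure every $\Gza^{\pm1/2}$ and $\Hza^{\pm1/2}$ is inserted on the correct side and that the resulting compositions are genuinely bounded between matching spaces—rather than any deep analytic difficulty; the estimates themselves are routine once Lemma~\ref{lem:H-lam-alph2} is in hand.
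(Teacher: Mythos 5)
Your proof is correct and follows essentially the same route as the paper's: the same $\Hza^{1/2}[\,\one\oP - (\lambda+\alpha)\Hza^{-1} + K_\lambda\,]\Hza^{1/2}$ factorization for part~(a), with invertibility and the quantitative bound $\|\Hza^{1/2}R^\perp(\lambda)\Hza^{1/2}\|\le 4\lac/\g_0$ both coming from Lemma~\ref{lem:H-lam-alph2} plus the smallness of $K_\lambda$, and for part~(c) the same insertion of $\Hza^{\pm1/2}$ factors together with the reduction $\Pz H\oP=\Pz W\oP$. The only cosmetic difference is that you treat all of $I_0^c$ uniformly via Lemma~\ref{lem:H-lam-alph2}, whereas the paper first disposes of nonreal $\lambda$ by self-adjointness; your version is arguably slightly cleaner but the argument is the same.
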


\begin{proof} (a)  Since $H^\perp$ is self-adjoint, the operator $H^\perp - \lax$ is invertible for any $ \lax\in\C\backslash\R$. For $\lax\in I_0$, we argue as follows.    With the notation $A^\bot := \oP A\oP \restriction_{\Ran \oP}$, we write
\[
  H^\perp = H_{0}^\perp +  W^\perp.
\]
 Now, we write
 \begin{equation}
 \label{eq:35000}
        H^\perp- \lax \oP =\Hza^{1/2} [ \oP - (\lax+\alpha)  \Hza^{-1} + K_{\lam}] \Hza^{1/2},
 \end{equation}
 with $K_{\lam} = \Hza^{-1/2} W^\perp \Hza^{-1/2}$.
Lemma~\ref{lem:H-lam-alph2} yields that $| \oP - (\lax+\alpha)  \Hza^{-1} P_0^\perp | \ge \frac{\gam_0}{2\lac}$ and thus, the operator $\one - (\lax+\alpha)  \Hza^{-1} + K_{\lam}$ is invertible as soon as $\| K_\lax \| < \frac{\gam_0}{2\lac},$ which is in particular the case if $\|W\|_{H_0, \aaa}\le \frac{1}{4}\frac{\g_0}{\lac}$.
Then, we also have $\left| \one - (\lax+\alpha)  \Hza^{-1} + K_{\lam} \right| \ge \frac{1}{4}\frac{\g_0}{\lac}.$ Hence the operator $H^\perp- \lax$ is a product of three invertible operators and therefore is invertible itself on $\Ran \oP$.

For (b), since $H^\perp- \lax$ is invertible on $\Ran \oP$,  
  the interval $I_0$ is contained in the resolvent set, $\rho(H^\perp|_{\Ran\oP})$, of $H^\perp|_{\Ran\oP}$ and therefore, since $H^\perp|_{\Ran\oP}$ is self-adjoint, 
 $I_0^c\subset\rho(H^\perp|_{\Ran\oP})$.
Since  
\begin{equation}
    \label{resolv-id2}
    R^\perp(\lam):=\oP(H^\perp- \lax)^{-1}\oP
\end{equation}
is the resolvent of the operator $H^\perp- \lax$ restricted to $\Ran\oP$, it is analytic on its resolvent set and in particular on $I_0^c$. 

To prove statement (c), we note that the operators $R^\perp(\lam), H \Pz$ and  $\Pz H= (H\Pz)^*$ are bounded and  $R^\perp(\lam)$ is symmetric for $\lam\in I_0$. Hence so is $U(\lam)$. The analyticity of $U(\lam)$ follows from the analyticity of $R^\perp(\lam)$. 
It it clear that $U(\lax)$ is of finite rank due to its definition.

Finally, to prove estimate \eqref{U-bnd}, we first show that
the operator $R^\perp(\lam):=\oP(H^\perp- \lax)^{-1}\oP$, $\lax\in I_0^c$, satisfies
\begin{align}
	\label{ResPerp-bnd2}
	\|\Hza^{1/2} R^\perp(\lam)\Hza^{1/2}\|
	\le 4\, \frac{\lac}{\g_0}.
\end{align}
To this end, we invert \eqref{eq:35000} on $\Ran\oP$ and use that
$\left| \one - (\lax+\alpha)  \Hza^{-1} + K_{\lam} \right| \ge \frac{1}{4}\frac{\g_0}{\lac}$ to obtain~\eqref{ResPerp-bnd2} for  $\lax\in I_0^c$.

Finally, we prove inequality~\eqref{U-bnd}. Since $\Pz H_0 = H_0 \Pz$ 
  and $\Pz \oP = 0$, we have
\[
  \Pz H \oP =  \Pz W \oP, \quad
  \oP H \Pz = \oP W \Pz.
\]
These relations and definition \eqref{Ulam} yield
\begin{align} 
  \label{U-expr} &U(\lam) = - \Pz W R^\perp(\lam) W \Pz.
\end{align}
Combining~\eqref{ResPerp-bnd2} and~\eqref{U-expr}, we obtain~\eqref{U-bnd}. 
\end{proof}	

\begin{remark}
	 In addition, we have the estimate
\begin{align}\label{U-bnd2}	&\|U(\lax)\|\le 4 \, \frac{\lac^2}{\g_0}\|\Pz W \oP\|_{H_0, \aaa}^2 .
\end{align}
Indeed, since 
	$H_0 \Pz=\lax_{0 } \Pz,$ 
	 we have
$
\label{P-est}\|(H_0+\aaa)^{1/2} \Pz\|^2 = \lax_0+\aaa,
$
which implies the estimate
\begin{align}
	\label{PAP-est}
	&\|\Pz A \Pz\| =  (\lax_0+\aaa)\|\Pz A \Pz\|_{H_{0, \aaa}} ,
\end{align}
which, together with  estimate \eqref{U-bnd}, yields   \eqref{U-bnd2}. 
\end{remark}
 Hence, under the conditions of Lemma~\ref{lem:FSM-conds} and for $\lax\in\I$ the following Hamiltonian is well defined

\begin{equation}
    \label{Hlam-1}
    H(\lax) := \Pz H \Pz + U(\lax).
\end{equation}
Note that $\Pz H \Pz=\lax_0\Pz$. Lemma \ref{lem:FSM-conds} above implies
\begin{cor}\label{cor:U-prop} 
The operator family
$H(\lax)$ is 
(i)  self-adjoint for  $\lax\in I_0$ and 
(ii)  complex analytic in $\lax \in I_0^c$.   \end{cor}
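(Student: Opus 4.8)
The plan is to derive both claims directly from the decomposition $H(\lax) = \Pz H \Pz + U(\lax) = \lax_0 \Pz + U(\lax)$ together with the properties of $U(\lax)$ already established in Lemma~\ref{lem:FSM-conds}. Since $\Pz H \Pz = \lax_0 \Pz$ is a fixed, bounded, self-adjoint, finite-rank operator (independent of $\lax$), both the self-adjointness and the analyticity of $H(\lax)$ reduce to the corresponding statements for $U(\lax)$.

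For part (i): for $\lax \in I_0 \subset \R$, Lemma~\ref{lem:FSM-conds}(c) states that $U(\lax)$ is symmetric; it is moreover bounded by the estimate~\eqref{U-bnd}, hence self-adjoint. As $\lax_0 \Pz$ is self-adjoint, the sum $H(\lax) = \lax_0 \Pz + U(\lax)$ is self-adjoint for every $\lax \in I_0$.

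For part (ii): again by Lemma~\ref{lem:FSM-conds}(c), $U(\lax)$ is an analytic operator-family on $I_0^c$ — this follows in turn from the analyticity of the resolvent $R^\perp(\lax)$ on $I_0^c$ established in Lemma~\ref{lem:FSM-conds}(b) and the boundedness of $H\Pz$ and $\Pz H$, since $U(\lax) = -\Pz W R^\perp(\lax) W \Pz$ is an analytic family composed with fixed bounded operators. Adding the constant (in $\lax$) bounded operator $\lax_0 \Pz$ preserves analyticity, so $H(\lax)$ is complex analytic on $I_0^c$.

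There is essentially no obstacle here: the corollary is a bookkeeping consequence of Lemma~\ref{lem:FSM-conds}, and the only point requiring minimal care is to observe that $\Pz H \Pz$ contributes the constant term $\lax_0 \Pz$, which is trivially self-adjoint and $\lax$-independent, so it affects neither conclusion.
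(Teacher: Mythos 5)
Your proof is correct and matches the paper's intent exactly: the paper simply states ``Lemma~\ref{lem:FSM-conds} above implies'' the corollary, and your argument fills in precisely the intended bookkeeping — $H(\lax)$ is a $\lax$-independent bounded self-adjoint operator plus $U(\lax)$, and part~(c) of Lemma~\ref{lem:FSM-conds} supplies both the symmetry on $I_0$ and the analyticity on $I_0^c$ of $U(\lax)$. One small caveat (inherited from the paper, so not your fault): the identity $\Pz H \Pz = \lax_0 \Pz$ actually requires $\Pz W \Pz = 0$, which is not assumed in the abstract setting, so in general $\Pz H \Pz = \lax_0 \Pz + \Pz W \Pz$; but this is harmless here, since all your argument uses is that $\Pz H \Pz$ is bounded, self-adjoint and independent of $\lax$, which holds regardless.
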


In what follows, we label the eigenvalue families $\nu_i(\lax)$,  $i=1,\ldots,m$, of $H(\lax)$ in the order of their increase and so that 

\begin{align}
    \label{nui-order}
    \nu_1(\lax) \le \ldots \le \nu_{m}(\lax).
\end{align}

Note that the eigenvalue branches $\nu_i(\lax)$ can also be of higher multiplicity.
On a subinterval $I_i\subset I_0$, we say that the \emph{branch $\nu_i(\lax)$ is isolated on $I_i$} if each other branch $\nu_j(\lax)$, with $\lax \in I_i$, either i) coincides with $\nu_i(\lax)$ or ii) satisfies
\begin{equation}
    \label{eq:BranchCond}    
    \min_{\lax\in I_i} |\nu_i(\lax) - \nu_j(\lax)|\ge \g_i >0.
\end{equation}

\begin{figure}[t!]
	\centering
    \includegraphics[trim = 0mm 140mm 50mm 0mm, clip,width=0.45\textwidth]{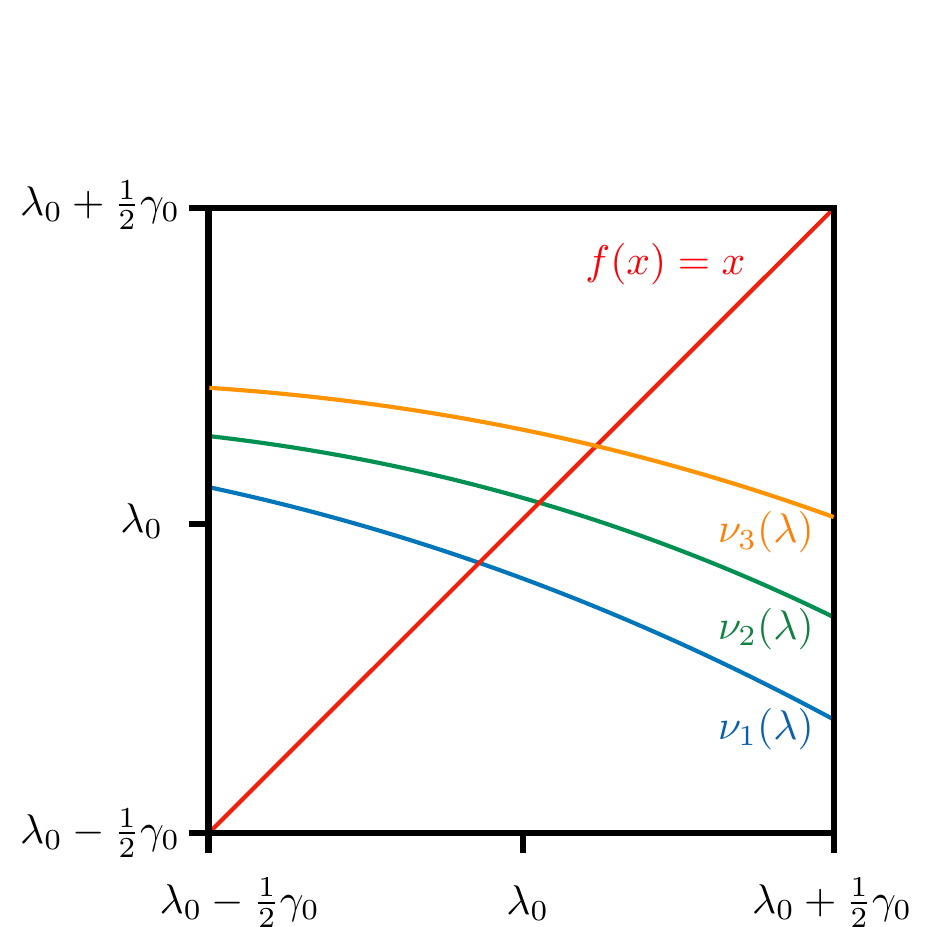}
	\includegraphics[trim = 0mm 140mm 50mm 0mm, clip,width=0.45\textwidth]{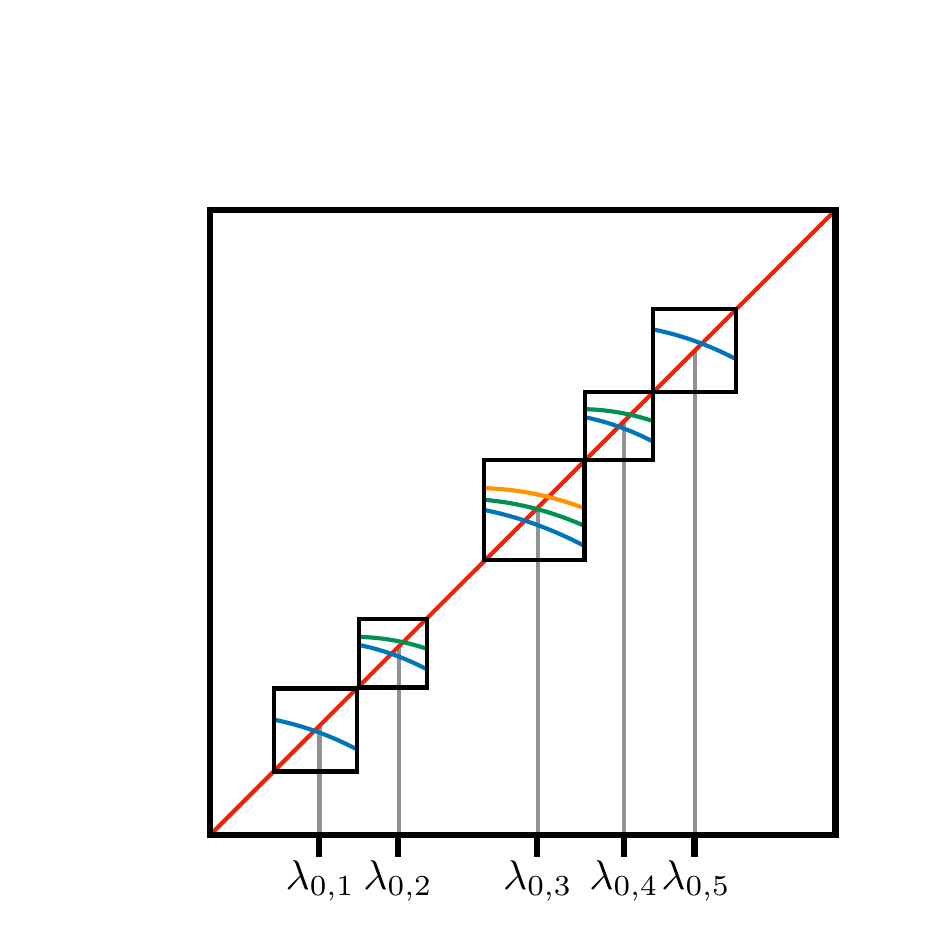}
    \caption{(Left) Schematic illustration of the eigenvalues $\nu_i(\lax)$ of $H(\lax)$ in the neighborhood of $\lax_0$ for the case of $m=3$.
        (Right) Illustration of the spectrum of $H_0$ consisting of five eigenvalues $\lax_{0,1}\ldots,\lax_{0,5}$ of multiplicity $m_{0,1}=1$, $m_{0,2}=2$, $m_{0,3}=4$, $m_{0,4}=2$, $m_{0,5}=1$ and the corresponding situation when zooming in close to $\lax_0 = \lax_{0,i}$.
 	}
	\label{fig:zoom}
\end{figure}

Further, we have the following result.

 \begin{prop}\label{prop:H-evs} 
Let $\alpha\in\R$ be such that  $H_0+\aaa>0$ and let
  $I_i\subset I_0$ be such that the branch $\nu_i(\lax)$ is isolated on $I_i$.
 For $\lax\in I_i$, 
 (i) the eigenvalues $\nu_i(\lax)$ of $H(\lax)$  are continuously differentiable; 
 (ii) the derivative $\nu_i'(\lax)$ is non-positive; 
 (iii) the solutions to the equations $\nu_i(\lax) = \lax$ are unique if $\lax\in I_i$; 
 (iv) if  $\|W\|_{H_{0, \aaa}}\le \frac{1}{4}\frac{\g_0}{\lac}$, the derivatives $\nu_i'(\lax)$, $\lax\in\I':= [\lax_{0 }- \frac14 \g_0,  \lax_{0 }+ \frac14 \g_0 ]\cap I_i $, 
are bounded as 
\[
    |\nu_i'(\lax)| \le   \frac{8}{\pi} \frac{(\lax_0 +\alpha)\lac}{\g_0^2} \|P_0 W \oP\|_{H_0, \aaa}^2.
\]
where $\lac:=\lax_{0 }+\g_{0 }+ \aaa$.
\end{prop}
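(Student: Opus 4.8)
The strategy is to exploit the structure $H(\lax) = \lax_0 \Pz + U(\lax)$ on the $m$-dimensional space $\Ran\Pz$, where $U(\lax)$ is symmetric, non-positive, analytic, and monotonically decreasing in $\lax$ (the last three by Lemma~\ref{lem:FSM-conds} and Proposition~\ref{prop:UN-prop}-type arguments, or directly from the representation $U(\lax)=-\Pz W R^\perp(\lax) W \Pz$ together with $\frac{d}{d\lax} R^\perp(\lax) = R^\perp(\lax)^2 \ge 0$).

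\textbf{Step (i): continuous differentiability.} On a subinterval $I_i$ where the branch $\nu_i(\lax)$ is isolated (in the sense of \eqref{eq:BranchCond}), the total projection onto the corresponding eigenspace is given by a Riesz integral $\frac{1}{2\pi i}\oint (z - H(\lax))^{-1}\,dz$ over a fixed contour; since $H(\lax)$ is analytic in $\lax\in I_0^c$ (Corollary~\ref{cor:U-prop}) and the contour can be kept fixed in a neighborhood of each point of $I_i$, this projection is analytic in $\lax$, hence $C^1$. The eigenvalue $\nu_i(\lax)$, as the trace of $H(\lax)$ times this projection divided by the (locally constant) multiplicity, is then $C^1$ (indeed real-analytic where the multiplicity is constant). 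One should note that even across crossing points within an isolated branch, the \emph{ordered} eigenvalue $\nu_i(\lax)$ remains continuous and piecewise-$C^1$ with matching one-sided derivatives, because monotonicity forces the branches to cross transversally in a controlled way; a cleaner route is to note that $\nu_i(\lax)$ equals a min–max expression $\min_{\dim S = i}\max_{0\ne u\in S}\langle u, H(\lax) u\rangle/\|u\|^2$ over subspaces of $\Ran\Pz$, which is a locally Lipschitz, and in fact $C^1$ on $I_i$ by the isolation hypothesis, function of $\lax$.

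\textbf{Step (ii): $\nu_i'(\lax)\le 0$.} By Hellmann–Feynman, for a normalized eigenvector $u(\lax)$ of $H(\lax)$ with eigenvalue $\nu_i(\lax)$ one has $\nu_i'(\lax) = \langle u(\lax), U'(\lax) u(\lax)\rangle$. Differentiating $U(\lax) = -\Pz W R^\perp(\lax) W \Pz$ and using $R^\perp{}'(\lax) = R^\perp(\lax)^2 \ge 0$ on $\Ran\oP$ gives $U'(\lax) = -\Pz W R^\perp(\lax)^2 W \Pz \le 0$, hence $\nu_i'(\lax)\le 0$. (At points where $\nu_i$ is merely one-sided differentiable one applies this to each one-sided derivative.)

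\textbf{Step (iii): uniqueness of the fixed point.} The map $\lax\mapsto \nu_i(\lax) - \lax$ is continuous on $I_i$ and, by (ii), strictly decreasing (its derivative is $\le -1 < 0$ wherever defined); hence it has at most one zero in $I_i$.

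\textbf{Step (iv): the quantitative bound.} This is the main computational point. Write $U'(\lax) = -\Pz W R^\perp(\lax)^2 W \Pz$ and insert $\Hza^{\pm 1/2}$ factors:
\[
    |\nu_i'(\lax)| = |\langle u, U'(\lax) u\rangle|
    \le \|\Pz W \Hza^{-1/2}\|^2 \, \|\Hza^{1/2} R^\perp(\lax)^2 \Hza^{1/2}\|.
\]
The first factor is $(\lax_0+\alpha)\,\|\Pz W \oP\|_{H_0,\alpha}^2$ by the identity \eqref{PAP-est} (equivalently $\|(H_0+\alpha)^{1/2}\Pz\|^2 = \lax_0+\alpha$). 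For the second factor one uses the representation \eqref{eq:35000}, so that $\Hza^{1/2} R^\perp(\lax)^2 \Hza^{1/2} = \Hza^{-1/2}\big[\one\oP - (\lax+\alpha)\Hza^{-1} + K_\lax\big]^{-2}\Hza^{-1/2}\cdot\Hza$ — more carefully, $R^\perp(\lax) = \Hza^{-1/2} G(\lax)^{-1}\Hza^{-1/2}$ with $G(\lax) = \one\oP - (\lax+\alpha)\Hza^{-1}+K_\lax$, so that $\Hza^{1/2}R^\perp(\lax)^2\Hza^{1/2} = G(\lax)^{-1}\Hza^{-1}G(\lax)^{-1}$; then bound $\|G(\lax)^{-1}\|\le 4\lac/\g_0$ by Lemma~\ref{lem:FSM-conds}(c) (via Lemma~\ref{lem:H-lam-alph2}), and estimate $\|\Hza^{-1}\|$ on $\Ran\oP$ by $(\lax_0+\g_0+\alpha)^{-1}\le$ something — actually here one needs a slightly sharper estimate to produce the factor $\frac{8}{\pi}\frac{1}{\g_0}$ rather than $\frac{\lac}{\g_0^2}$ naively; the $\frac{8}{\pi}$ and the precise power of $\g_0$ suggest that instead of the crude operator-norm split one writes $\nu_i'(\lax)$ as a contour integral $\frac{1}{2\pi i}\oint \langle u, (z-H(\lax))^{-1}U'(\lax)(z-H(\lax))^{-1} u\rangle\,dz$ over a circle of radius $\sim \g_0/4$ and optimizes, or uses the spectral representation of $R^\perp(\lax)^2 = \sum_j (\lax_{0j}-\lax)^{-2}|\cdot\rangle\langle\cdot|$ together with $\inf_j|\lax_{0j}-\lax|\ge \g_0/2$ for $\lax\in\I'$ and $\sum_j (\lax_{0j}+\alpha)^{-1}|\cdot\rangle\langle\cdot| \le \Hza^{-1}$.

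\textbf{Main obstacle.} The qualitative parts (i)–(iii) are routine given analyticity of $H(\lax)$ and $R^\perp{}'=({R^\perp})^2\ge 0$; the real work is matching the \emph{exact constants} $\tfrac{8}{\pi}$, $(\lax_0+\alpha)$, $\lac$, and $\g_0^2$ in (iv). I expect this requires the integral formula for $(H_0+\alpha)^{-1}$ quoted after \eqref{H0a-norm} (which is where a factor like $\tfrac{1}{\pi}$ naturally appears, since $c_\alpha^{-1}=\int_0^\infty(\alpha+\om)^{-1}\om^{-1/2}d\om = \pi\alpha^{-1/2}$ for $s=1/2$), applied to control $\|\Hza^{-1/2}(\lax - H_0^\perp)^{-1}\Hza^{-1/2}\|$ or the derivative of $R^\perp$ directly, rather than the cruder bounds used elsewhere in the paper. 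Keeping track of where the factor of two in $\g_0/2$ versus $\g_0/4$ enters (the restriction to $\I'$ rather than $\I$) is the bookkeeping one must do carefully.
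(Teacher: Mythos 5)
Parts (i)--(iii) of your proposal are essentially sound and close in spirit to the paper's argument: the paper also uses analyticity of the Riesz projection on an isolated branch (it then invokes an analytic family of unitaries from \cite[Theorem XII.12]{RSIV} to produce differentiable normalized eigenvectors $\chi_i(\lax)$ and derive the Hellmann--Feynman identity $\nu_i'(\lax)=\lan \chi_i(\lax),U'(\lax)\chi_i(\lax)\ran$, whereas you work with the trace of $H(\lax)P_i(\lax)$; both are legitimate under the isolated-branch hypothesis), and (ii)--(iii) are identical: $U'(\lax)=-\Pz W R^\perp(\lax)^2 W\Pz\le 0$ and monotonicity gives uniqueness of the fixed point.

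The genuine gap is in (iv): you do not actually prove the stated bound. Your direct splitting $|\nu_i'(\lax)|\le \|(H_0+\aaa)^{1/2}\Pz\|^2\,\|\Hza^{1/2}R^\perp(\lax)^2\Hza^{1/2}\|$ leads, after writing $\Hza^{1/2}R^\perp(\lax)^2\Hza^{1/2}=\bigl(\Hza^{1/2}R^\perp\Hza^{1/2}\bigr)\Hza^{-1}\bigl(\Hza^{1/2}R^\perp\Hza^{1/2}\bigr)$, to a factor $\|\Hza^{-1}\oP\|$ that is not controlled by the quantities appearing in the statement (unless $\lax_0$ is the ground state, and even then one only gets the constant $16$ rather than $8/\pi$). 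Your speculated repair also misidentifies the source of the $\pi$: it has nothing to do with the integral representation of $(H_0+\aaa)^{-s}$. The paper's actual argument is short: from the Hellmann--Feynman identity, $|\nu_i'(\lax)|\le \|(H_0+\aaa)^{1/2}\Pz\|^2\,\|U'(\lax)\|_{H_0,\aaa}=(\lax_0+\aaa)\,\|U'(\lax)\|_{H_0,\aaa}$, and then one bounds $\|U'(\lax)\|_{H_0,\aaa}$ by applying Cauchy's integral formula to the \emph{operator family} $U(\cdot)$, which is analytic on $I_0^c$ and satisfies the uniform bound \eqref{U-bnd}, $\|U(\mu)\|_{H_0,\aaa}\le 4\,[\lac/\g_0]\,\|\Pz W\oP\|_{H_0,\aaa}^2$, on that set. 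For $\lax\in\I'$ the disc of radius $R=\tfrac14\g_0$ about $\lax$ stays inside $I_0^c$, so
\begin{equation*}
  \|U'(\lax)\|_{H_0,\aaa}
  \le \frac{1}{2\pi R}\sup_{|\mu-\lax|=R}\|U(\mu)\|_{H_0,\aaa}
  \le \frac{2}{\pi\g_0}\cdot 4\,\frac{\lac}{\g_0}\,\|\Pz W\oP\|_{H_0,\aaa}^2
  = \frac{8}{\pi}\,\frac{\lac}{\g_0^2}\,\|\Pz W\oP\|_{H_0,\aaa}^2,
\end{equation*}
which multiplied by $(\lax_0+\aaa)$ gives exactly the claimed estimate; this is where the restriction from $\I$ to $\I'$ and the factor $8/\pi$ both enter. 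You gestured at a contour of radius $\g_0/4$, but applied to a resolvent representation of $\nu_i'$ rather than to $U$ itself, and you never carried the computation out, so the quantitative part of the proposition remains unproved in your proposal.
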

\begin{proof}
Proof of (i) of a simple eigenvalue $\lax_0$, i.e., $m=1$.
In such a case, $\Pz$ is a rank-one projector on the space spanned by the eigenvector $\varphi_0$ of $H_0$ corresponding to the eigenvalue $\lax_0$ and therefore Eq. \eqref{Hlam-1} implies that $H(\lam) =\nu_1 (\lax) \Pz$, with
\begin{align}\label{FS-sim}
  &\nu_1 (\lax): = \lan \varphi_{0}, H(\lam) \varphi_{0}\ran. 
\end{align}
This and Corollary \ref{cor:U-prop} show that the eigenvalue $\nu_1 (\lax)$ is  analytic.

We now prove (i) in the general case. 
First, we claim the following well-known formula 
\begin{align}
	\label{nu-deriv}{\nu}_i'(\lax)=\lan \chi_i(\lax), U'(\lax)\chi_i(\lax)\ran,
\end{align}
for $\lax\in I_i$, where $\chi_i(\lax)$ are well-chosen normalized eigenfunction of $H(\lax)$  corresponding to  the eigenvalue $\nu_i(\lax)$, namely that they are differentiable in $\lax$.
To this end, we observe that for each $\mu\in I_i$, we can find a local neighborhood $I_\mu\subset I_i$ of $\mu$ such that 
\begin{equation}
    \label{eq:no-intersection}
    \bigcup_{j\neq i}\{\nu_j(\lax) \,|\, \lax\in I_\mu \}\cap 
    \{ \nu_i(\lax) \,|\, \lax\in I_\mu \} = \emptyset,
\end{equation}
due to the isolated branch property, i.e., $\g_i>0$ in~\eqref{eq:BranchCond}.
Second, since $H(\lax)$ is self-adjoint for $\lax\in I_0$ and
 analytic (say, in the resolvent sense) in $\lax\in I_0^c$,  the Riesz projection, corresponding to  the eigenvalue~$\nu_i(\lax)$: 
\begin{equation} 
    \label{RieszProj}
    P_i(\lax) 
    := 
    \frac{1}{2 \pi i} \oint_{\Gamma_{i}(\mu)}  (H(\lax)-z)^{-1} dz,
\end{equation}
where $\Gamma_{i}(\mu)$ is a 
closed curve in the resolvent set of $H(\lax)$ surrounding the eigenvalue branch $\{\nu_i(\lax): \lax\in I_\mu\}$, is also self-adjoint and  analytic in $\lax\in I_\mu^c$ and therefore in $\lax\in I_0^c$  (see \cite{RSIV,HS}),
condition~\eqref{eq:no-intersection} guarantees that we can choose such a closed curve which contains no other points of $\sigma(H(\mu))$ on $I_\mu$, 
and that, combining all neighborhoods of $\mu$ for $\mu \in I_i$, there holds that $P_i(\lax)$ is analytic in $I_i$. From~\cite[Theorem XII.12]{RSIV}, there exists an analytic family of unitary operators $V_i(\lax)$ such that $P_i(\lax) = V_i(\lax) P_i(\lax_0) [V_i(\lax)]^{-1}$, $\lax_0$ being possibly replaced by some arbitrary $\mu \in I_i$ if $\lax_0$ does not belong to $I_i$.
We then define 
\[
    \chi_i(\lax)
    =
    V_i(\lax) \psi_{0 i},
\]
where $\psi_{0 i}$ is an eigenvector of $H_0$ corresponding to the eigenvalue $\lax_0$.
Since  $V_i(\lax)$ is analytic, $\chi_i(\lax)$ is also analytic in $I_i$, so in particular differentiable, and one can easily check that $\chi_i(\lax)$ is of norm 1 and that $P_i(\lax) \chi_i(\lax) = \chi_i(\lax)$, which guarantees that $ \chi_i(\lax)$ is a normalized  eigenfunction of $H(\lax)$.
Now, we use that 
\begin{align*}
    \lan \chi_i'(\lax), H(\lax)\chi_i(\lax)\ran+\lan \chi_i(\lax), H(\lax)\chi_i'(\lax)\ran
    &=
    {\nu}_i(\lax)(\lan \chi_i'(\lax), \chi_i(\lax)\ran+\lan \chi_i(\lax), \chi_i'(\lax)\ran)\\
    &=
    \lan \chi_i(\lax), \chi_i(\lax)\ran'=0
\end{align*} 
to obtain ${\nu}_i'(\lax)=\lan \chi_i(\lax), H'(\lax)\chi_i(\lax)\ran$, which gives \eqref{nu-deriv}.
The differentiability of $\chi_i(\lax)$ and the analyticity of $H(\lax)$ then implies the differentiability of $\nu_i$ in each neighborhood of $\lax$.

In order to prove (ii), note that $U'(\lax)\le 0$, as follows by the explicit formula 
\begin{align}
	\label{Ulam-deriv'}
	U'(\lam):=-P W\oP (H^\perp- \lax)^{-2} \oP WP\le 0.
\end{align}
 Hence, ${\nu}_i'(\lax)<0$ by~\eqref{nu-deriv}. 
 The monotonicity of $\nu_i(\lax)$ also implies the well-posedness of the equations $\nu_i(\lax) =\lax$ under the condition that $\lax\in I_i$, thus statement (iii).

 We now aim to prove (iv). 
Starting from~\eqref{nu-deriv}, we estimate ${\nu}_i'(\lax)$ with
\begin{align}
	\label{nu'-est1-1_v1}
	|\nu_i'(\lax)| \le \|(H_0+ \aaa)^{1/2} P_0\|^2\|U'(\lax)\|_{H_0, \aaa}. 
\end{align}
The first factor on the right hand side is exactly known as
\begin{equation}
    \label{estim-H0a_v1}
     \|(H_0+ \aaa)^{1/2} P_0\|^2 = (\lax_0 +\alpha).
\end{equation}
To investigate the second factor on the r.h.s. of \eqref{nu'-est1-1_v1}, we use the analyticity $U(\lax)$ and the estimate \eqref{U-bnd}. Indeed, by the Cauchy integral formula, we have \[\|U'(\lax)\|_{H_0, \aaa}\le \frac{1}{2\pi R}\sup_{\substack{\mu \in \C, \\ |\mu-\lax|=R}} \|U(\mu)\|_{H_0, \aaa},\] where $R$ is such that  $\{\mu\in \C:|\mu-\lax|\le R\}\subset I_0^c$.
  Taking  $R=\frac14 \g_0$ gives, under the conditions of Lemma \ref{lem:FSM-conds}, the estimate 
\begin{align}
	\label{U'-bnd}
	&\|U'(\lax)\|_{H_0, \aaa}\le
    \frac{8}{\pi} \frac{\lac}{\g_0^2} \|P_0 W \oP\|_{H_0, \aaa}^2.
\end{align}
Combining equations \eqref{nu'-est1-1_v1}, \eqref{estim-H0a_v1} and \eqref{U'-bnd} shows (iv).
\end{proof}

\begin{cor}
\label{cor:ContractionH}
Let $\alpha\in\R$ be such that  $H_0+\aaa>0$ and let $I_i\subset I_0$ be such that the branch $\nu_i(\lax)$ is isolated on $I_i$.
  Under the condition that
  \[
    \frac{8}{\pi} \frac{(\lax_0 +\alpha)\lac}{\g_0^2} \|P_0 W \oP\|_{H_0, \aaa}^2 < 1,
  \]
  and that the unique solution $\lax$ of $\nu_i(\lax)=\lax$ satisfies $\lax\in\I'$, the fixed-point iteration $\lax^{(k+1)} = \nu_i(\lax^{(k)})$ converges to $\lax$ for initial values in $\I'$.
\end{cor}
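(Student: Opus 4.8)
The plan is to recognize Corollary~\ref{cor:ContractionH} as a direct application of the Banach fixed-point theorem to the map $g(\lax) := \nu_i(\lax)$ on the closed interval $\I'$. By Proposition~\ref{prop:H-evs}(i), $\nu_i$ is continuously differentiable on $I_i$, hence on $\I'\subset I_i$ (after intersecting), and by part~(iv), under the condition $\|W\|_{H_0,\aaa}\le\frac14\frac{\g_0}{\lac}$ its derivative is bounded on $\I'$ by
\[
    L := \frac{8}{\pi}\frac{(\lax_0+\alpha)\lac}{\g_0^2}\|P_0 W \oP\|_{H_0,\aaa}^2.
\]
The hypothesis of the corollary is exactly $L<1$, so by the mean value theorem $g$ is a strict contraction on $\I'$ with Lipschitz constant $L$. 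Note that the condition $L<1$ also implies $\|P_0 W \oP\|_{H_0,\aaa}^2 < \frac{\pi}{8}\frac{\g_0^2}{(\lax_0+\alpha)\lac}$, and since $\lac = \lax_0+\alpha+\g_0 \ge \g_0$, a short computation gives $\|W\|_{H_0,\aaa}\le\frac14\frac{\g_0}{\lac}$, so Proposition~\ref{prop:H-evs}(iv) is indeed applicable; I would insert this one-line check (or simply assume it as part of the standing hypotheses, since the corollary is stated under the conditions of Proposition~\ref{prop:H-evs}).

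The one genuine subtlety — and the step I expect to require the most care — is the invariance of $\I'$ under $g$: Banach's theorem needs $g(\I')\subseteq\I'$, which does not follow from the contraction property alone. Here I would use the existence of a fixed point $\las\in\I'$ guaranteed by hypothesis: for any $\lax\in\I'$,
\[
    |g(\lax)-\las| = |g(\lax)-g(\las)| \le L\,|\lax-\las| < |\lax-\las| \le \tfrac14\g_0,
\]
using that both $\lax$ and $\las$ lie in $\I' = [\lax_0-\tfrac14\g_0,\lax_0+\tfrac14\g_0]$ (intersected with $I_i$) so their distance is at most $\tfrac12\g_0$ — wait, this only gives $|g(\lax)-\las|<\tfrac12\g_0$, which is not quite enough. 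The cleaner route is to observe that $\las\in\I'$ together with $|g(\lax)-\las|\le L|\lax-\las|\le L\cdot\tfrac12\g_0 < \tfrac12\g_0$ places $g(\lax)$ within distance $\tfrac12\g_0$ of $\lax_0$; to land inside $\I'$ one genuinely needs more. I would therefore instead argue directly from monotonicity: by Proposition~\ref{prop:H-evs}(ii), $\nu_i$ is non-increasing, so $g$ maps $\I'$ into the interval $[g(\lax_0+\tfrac14\g_0),\, g(\lax_0-\tfrac14\g_0)]$; combining $g(\las)=\las\in\I'$ with the Lipschitz bound $L<1$ shows this image interval has length $\le L\cdot\tfrac12\g_0 < \tfrac12\g_0$ and contains the point $\las\in\I'$, and a short interval-arithmetic argument (using that the image interval is "centered" near $\las$ up to the contraction factor) confirms $g(\I')\subseteq\I'$ provided $\las$ is not too close to the endpoints; in the generic statement this is absorbed into "the unique solution $\lax$ of $\nu_i(\lax)=\lax$ satisfies $\lax\in\I'$".

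Having established that $g:\I'\to\I'$ is a strict contraction on the complete metric space $\I'$, the Banach fixed-point theorem yields a unique fixed point and the geometric convergence $|\lax^{(k)}-\las|\le L^k|\lax^{(0)}-\las|$ of the iteration $\lax^{(k+1)}=\nu_i(\lax^{(k)})=g(\lax^{(k)})$ from any initial value $\lax^{(0)}\in\I'$, which is precisely the assertion. Uniqueness of the fixed point on the larger interval $I_i$ is already furnished by Proposition~\ref{prop:H-evs}(iii), so I would only invoke Banach's theorem for the convergence of the iterates. In summary: (1) apply Proposition~\ref{prop:H-evs}(iv) to get the uniform Lipschitz bound $L$ on $\I'$, checking its hypothesis follows from $L<1$; (2) verify $g(\I')\subseteq\I'$ using monotonicity and the hypothesis that the fixed point lies in $\I'$; (3) invoke Banach's fixed-point theorem.
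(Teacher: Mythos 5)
The paper offers no explicit proof of Corollary~\ref{cor:ContractionH}, so there is no official proof to compare against; but your plan — extract a Lipschitz constant from Proposition~\ref{prop:H-evs}(iv), then invoke Banach's fixed-point theorem — is clearly the intended route and is essentially correct in outline. Two points need fixing.

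First, the claim that $L<1$ implies $\|W\|_{H_0,\aaa}\le\tfrac14\tfrac{\g_0}{\lac}$ is wrong. The hypothesis $L<1$ bounds $\|P_0 W P_0^\perp\|_{H_0,\aaa}$, and since $\|P_0 W P_0^\perp\|_{H_0,\aaa}\le\|W\|_{H_0,\aaa}$ the inequality goes the \emph{wrong} way: a small projected norm does not force the full norm to be small. So the condition $\|W\|_{H_0,\aaa}\le\tfrac14\tfrac{\g_0}{\lac}$ of Proposition~\ref{prop:H-evs}(iv) cannot be deduced from $L<1$ and must simply be carried as a standing hypothesis inherited from the surrounding framework. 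Your parenthetical remark ("or simply assume it") is the correct move; the "short computation" is not.

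Second, and more seriously, the invariance argument does not close. You correctly identify the subtlety — Banach's theorem needs $g(\I')\subseteq\I'$ — but your monotonicity-based fix ends with "confirms $g(\I')\subseteq\I'$ provided $\las$ is not too close to the endpoints," which concedes the gap rather than filling it. The hypothesis of the corollary only puts $\las\in\I'$; it does not keep $\las$ away from the endpoints. Concretely, for a decreasing $g$ with $|g'|\le L$ on $\I'$ and $g(\las)=\las$, one has $g(\I')\subseteq\I'$ if and only if $|\las-\lax_0|\le \tfrac{1-L}{1+L}\cdot\tfrac14\g_0$, which is strictly more than $\las\in\I'$ whenever $L>0$. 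To repair this you would either (a) add the hypothesis that $\las$ lies in the smaller interval just described, (b) replace $\I'$ by a smaller $\las$-centered interval $[\las-\delta,\las+\delta]\subset\I'$, which is automatically $g$-invariant by the contraction bound, and restrict the initial values to that interval, or (c) argue that even if a single iterate overshoots $\I'$, it remains in a slightly enlarged interval on which the Cauchy-integral derivative bound of Proposition~\ref{prop:H-evs}(iv) still holds with a constant still below $1$, and then the sequence re-enters $\I'$ and converges. Any of these would make the argument rigorous; as written, step (2) of your summary remains open.
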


Now, we proceed directly to the proof of Theorem \ref{thm:FS-pert-var}. 
\begin{proof}[Proof of Theorem \ref{thm:FS-pert-var}]
For the estimate on the eigenvalues, we first remark that applying Proposition~\ref{prop:eigenvalue-estimate} to the $m$ eigenvalues corresponding to $\lax_0$ for $H_0$ provides the first inequality in~\eqref{eq:eigenvalue_est}. 
The second one follows immediately from the condition $ \| W  \|_{\!H_0,\aaa} \le \frac{1}{2} \frac{\g_0}{\lax_0+\aaa}$ and thus $\mu_i\in I_0$.

The fact that the operator $H$ has exactly $m$ eigenvalues (counting the multiplicities) in $I_0$ follows from Corollary \ref{cor:nuFP} and Proposition \ref{prop:H-evs}(iii) and the fact that $H(\lax)$ is a $m\times m$ symmetric matrix.

For the estimates on the  eigenfunctions, recall from Theorem~\ref{thm:isospF} that $Q_0 (\mu_i)\psi_{0i}=\psi_{i}$, where $\mu_i = \nu_i(H)$ and the operator $Q_0 (\lax)$ is given by 
\begin{align} \label{Q}  
    Q_0 (\lax)  
    &:= 
    \one - R^\perp(\lam) \oP W \Pz,
\end{align}
with $R^\perp(\lam)$ defined in~\eqref{resolv-id2}.
This yields
 \begin{align}
	\psi_{ 0i } - \psi_{i}=\psi_{ 0i } - Q_0(\mu_{i})  \psi_{ 0i}=R^\perp(\mu_{i}) \oP W \Pz \psi_{ 0i }. 
 \end{align}
 Then, for $\gamma\in \{0,1/2\}$,
 \begin{align*}
     \| \Hza^\gamma(\psi_{ 0i } - \psi_{i}) \|
      \le \; & \| \Hza^\gamma R^\perp(\mu_{i}) \oP W \Pz\| \\
      \le \; & \| \Hza^\gamma \Hza^{-1/2} \oP \|  \| \Hza^{1/2} R^\perp(\mu_{i}) \Hza^{1/2} \| 
      \| \oP W \Pz\|_{H_0,\alpha} 
      \| \Pz \Hza^{1/2}\|.
 \end{align*}
In the previous expression, we can use~\eqref{ResPerp-bnd2} to estimate $\| \Hza^{1/2} R^\perp(\mu_{i}) \Hza^{1/2} \|$. Then, we note that
 \[
    \| \oP W \Pz\|_{H_0,\alpha} 
    \le \|W \|_{H_0,\alpha},
 \]
 as well as 
 \[
    \| \Pz \Hza^{1/2}\| = (\lax_0 + \alpha)^{1/2}.
 \]
 Finally, in the case $\gamma = 0$
 \[
    \| \Hza^{-1/2} \oP \| \le (\lax_0 + \alpha)^{-1/2},
 \]
 and for $\gamma = 1/2$,
 \[
   \| \Hza^\gamma \Hza^{-1/2} \oP \| \le 1.
 \]
 Combining the four bounds leads to~\eqref{eq:est-eigenvectors1} and~\eqref{eq:est-eigenvectors2}.
 \end{proof}

 \begin{remark}
 Note that by Theorem~\ref{thm:isospF}, any solution $\mu_i$ to the equation $\nu_i(\mu_i)=\mu_i$, for $i=1,\ldots,m$ and where the $\mu_i$ are in ascending order, is an eigenvalue of $H$.
Under the condition 
\begin{equation}
    \label{eq:WcondI}    
    \|W\|_{H_0, \aaa} \le \frac12 \frac{\g_0}{\lax_0 +  \aaa},
\end{equation}
Theorem~\ref{thm:FS-pert-var} guarantees that the eigenvalues \old{$\lax_i$}\new{$\mu_i$} satisfy $|\mu_i-\lax_0|\le \frac{\g_0}{2}$ and thus  $\mu_i \in \I$. 

On the contrary, the eigenvalues $\mu_i$ are the only $m$ eigenvalues of $H$ belonging to $\I$ if a similar condition as~\eqref{eq:WcondI}, but for the next larger eigenvalue of $H_0$ than $\lax_0$ holds whereas such a condition is automatically satisfied by \eqref{eq:WcondI} for the preceding eigenvalue of $H_0$.

\end{remark}
 
\section{Preliminary results} \label{sec:prelim-res} 

We now derive a few preliminary results that will be useful for proving Theorem~\ref{thm:main}.
For the following proofs, we define the following quantities:
 $h_\lax:=-\Delta-\lax$, $V_M^\perp = \PMp V \PMp$, $\VMN = \PNM V \PNM$.

\begin{lem}
\label{lem:Vperp-bnd}
For $\lax\in \C$ with $\re\lax < \frac12 \rho_M$ and $\rho_M \ge 1$, the following bounds hold
\begin{align}\label{q-bnd}
	\|h_\lax^{-1/2}\VMN h_\lax^{-1/2}\| \le \|h_\lax^{-1/2}V_M^\perp h_\lax^{-1/2}\|  \le 4  \laM^{-r}\Enorm{V}. 
\end{align}
\end{lem}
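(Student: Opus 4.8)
�The plan is to bound the middle operator $h_\lax^{-1/2} V_M^\perp h_\lax^{-1/2}$ on $\Ran\PMp$ by comparing it with the weighted operator appearing in $\Enorm{V}$, and then to note that the inequality for $\VMN$ follows for free because $\VMN = \PNM V_M^\perp \PNM$ restricts $V_M^\perp$ to a smaller subspace, so its symmetric sandwich has smaller norm.

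First I would write, on $\Ran\PMp$,
\begin{align*}
    h_\lax^{-1/2} V_M^\perp h_\lax^{-1/2}
    &=
    h_\lax^{-1/2} (-\Delta+1)^{1/2-r/2}\,\PMp\,
    \Big[(-\Delta+1)^{-1/2+r/2} V (-\Delta+1)^{-1/2+r/2}\Big]\,\PMp\,(-\Delta+1)^{1/2-r/2} h_\lax^{-1/2},
\end{align*}
using that $\PMp$ commutes with every function of $-\Delta$. The bracketed operator has norm exactly $\Enorm{V}$, so it remains to control the norm of the operator $A_\lax := h_\lax^{-1/2}(-\Delta+1)^{1/2-r/2}\PMp$ on $\Ran\PMp$; then $\|h_\lax^{-1/2}V_M^\perp h_\lax^{-1/2}\| \le \|A_\lax\|^2 \,\Enorm{V}$. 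Since all operators in sight are diagonalized by the plane-wave (Fourier) basis, $\|A_\lax\|^2$ is the supremum over frequencies $\bk$ with $|\bk|^2 \ge \laM$ (i.e. those surviving $\PMp$) of
\[
    \frac{(|\bk|^2+1)^{1-r}}{|\,|\bk|^2-\lax\,|}.
\]
For real $\lax$, using $\re\lax < \tfrac12\rho_M \le \tfrac12|\bk|^2$ one gets $|\,|\bk|^2-\lax\,| \ge \tfrac12|\bk|^2 \ge \tfrac12\cdot\tfrac12(|\bk|^2+1) = \tfrac14(|\bk|^2+1)$ (using $|\bk|^2\ge\rho_M\ge 1$), hence the quotient is at most $4(|\bk|^2+1)^{-r} \le 4\rho_M^{-r} \le 4\laM^{-r}$; for complex $\lax$ one uses $|\,|\bk|^2-\lax\,| \ge |\bk|^2 - \re\lax$ and the same estimate applies. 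Thus $\|A_\lax\|^2 \le 4\laM^{-r}$, giving the middle inequality in \eqref{q-bnd}. Wait — I should double-check the constant: with $\re\lax<\tfrac12\rho_M$ and $|\bk|^2\ge\rho_M$ we have $|\bk|^2-\re\lax > \tfrac12|\bk|^2$, and $|\bk|^2 \ge \tfrac12(|\bk|^2+1)$ when $|\bk|^2\ge 1$, so $|\bk|^2-\re\lax > \tfrac14(|\bk|^2+1)$, confirming the factor $4$.

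For the first inequality, I would observe that $\VMN = \PNM V \PNM = \PNM V_M^\perp \PNM$ since $\PNM = \PNM\PMp = \PMp\PNM$, and that $\PNM$ commutes with $h_\lax^{-1/2}$; therefore $h_\lax^{-1/2}\VMN h_\lax^{-1/2} = \PNM\big(h_\lax^{-1/2}V_M^\perp h_\lax^{-1/2}\big)\PNM$, whose operator norm is bounded by that of $h_\lax^{-1/2}V_M^\perp h_\lax^{-1/2}$. This gives the chain in \eqref{q-bnd}. The only mild subtlety — and the place where I would be most careful — is the domain/invertibility bookkeeping: $h_\lax^{-1/2}$ must be interpreted on $\Ran\PMp$ (where $-\Delta - \lax \ge \rho_M - \re\lax > \tfrac12\rho_M > 0$, so it is a well-defined bounded positive operator), and one must keep track that all the intertwining of $\PMp$, $\PNM$ with functions of $-\Delta$ is legitimate, which it is because everything is simultaneously diagonal in the Fourier basis. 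No genuine obstacle beyond this routine Fourier-multiplier estimate.
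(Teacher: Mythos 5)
Your proof is correct and follows essentially the same route as the paper: both are routine Fourier-multiplier estimates on $\Ran\PMp$, with the first inequality obtained by observing that $\VMN$ is the compression of $V_M^\perp$ to $\Ran\PNM$. The only cosmetic difference is that you insert $(-\Delta+1)^{\pm(1/2-r/2)}$ and bound $\|h_\lax^{-1/2}(-\Delta+1)^{1/2-r/2}\PMp\|^2$ in one step, whereas the paper first passes from $h_\lax$ to $h_{-1}=-\Delta+1$ (factor $4$) and then bounds $\|V_M^\perp\|_0\le\laM^{-r}\Enorm{V}$ separately; the content is identical.
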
 

\begin{proof} 
First, we note that 
\[
    	\|h_\lax^{-1/2}\VMN h_\lax^{-1/2}\| = \|P_N h_\lax^{-1/2} V_M^\perp h_\lax^{-1/2} P_N\| \le \| h_\lax^{-1/2}
    	 V_M^\perp 
    	h_\lax^{-1/2} \|.
\]
Then, we estimate for any $s\ge 0,$ using the assumption $\re\lax\le \frac12\laM $
\begin{align}\label{hal/hlam-est}
	&\|h_{-1}^s h_{\lam}^{-s} \PMp\|
	\le \frac{|\rho_M +1|^s }{|\rho_M - \lax|^s}
	\le 2^s \; ( 1 + \rho_M^{-1})^s \le 4^s,	
\end{align} 
This implies in particular that  
 $\|h_{\lam}^{-1/2}V_M^\perp h_{\lam}^{-1/2}\|
 \le 4 \, \EnormZ{V_M^\perp}$. The result follows noting that $\EnormZ{V_M^\perp} \le \laM^{-r}\Enorm{V}.$
\end{proof}

 \begin{lem}
\label{lem:UNK-bnd}
For $\lax\in\C$ with $\re\lax\le \min(\frac12\laM, \ka_M)$ and $\laM \ge 1$,
 the following bound holds  
\begin{align}
	\label{UNMK-bnd}
	\|\Usl\|_{r} \le 
	& \;
	4 \laM^{-r} \Enorm{V}^2   \sum_{k=0}^K \left[4  \laM^{-r}\Enorm{V} \right]^k.
\end{align}
Moreover, if $4  \laM^{-r}\Enorm{V} < 1$, 
\begin{align}
	\label{UNMK-bnd2}
	\|\Usl\|_{r} \le 
	& \;
	 \laM^{-r}  \frac{4\,\Enorm{V}^2}{1 - 4  \laM^{-r}\Enorm{V}},
\end{align}
and in particular
\begin{align}
 	    \label{UM-bnd}
      \|\UM(\lax)\|_{r} & \le \laM^{-r}  \frac{4\,\Enorm{V}^2}{1 - 4  \laM^{-r}\Enorm{V}}.
\end{align}
\end{lem}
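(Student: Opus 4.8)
\textbf{Proof plan for Lemma~\ref{lem:UNK-bnd}.}
The plan is to estimate the truncated effective interaction $\Usl$ term by term in the Neumann series. Recall that
\[
    \Usl
    =
    -\PM V \PNM R_\sigma(\lax) \PNM V \PM,
    \qquad
    R_\sigma(\lax) = \sum_{k=0}^K (-1)^k \bigl[ \GNM(\lax) \VMN \bigr]^k \GNM(\lax),
\]
so that $\|\Usl\|_r \le \sum_{k=0}^K \bigl\| \PM V \PNM \bigl[ \GNM(\lax)\VMN\bigr]^k \GNM(\lax) \PNM V \PM\bigr\|_r$. The key idea is to insert the factors $h_\lax^{1/2} h_\lax^{-1/2}$ in the right places so that the $k$-th term decomposes into: two ``outer'' factors of the form $(-\Delta+1)^{-1/2+r/2} V (-\Delta+1)^{-1/2+r/2}$ (whose norm is exactly $\Enorm{V}$ by definition), $k$ ``inner'' factors $h_\lax^{-1/2}\VMN h_\lax^{-1/2}$ (each bounded by $4\laM^{-r}\Enorm{V}$ via Lemma~\ref{lem:Vperp-bnd}), and the remaining powers of $(-\Delta+1)$ and $h_\lax$ restricted to $\Ran\PNM$, which must be controlled by $\laM^{-r}$.

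First I would handle the bookkeeping of the Sobolev weights. Writing $g := (-\Delta+1)$, I would rewrite $\PM V \PNM = \PM g^{1/2-r/2} \cdot \bigl(g^{-1/2+r/2} V g^{-1/2+r/2}\bigr) \cdot g^{1/2-r/2}\PNM$, and similarly for the rightmost $\PNM V \PM$. The two middle factors contribute $\Enorm{V}^2$. The leftover operator $g^{1/2-r/2}\PNM \GNM(\lax) \PNM g^{1/2-r/2} = g^{1/2-r/2} h_\lax^{-1} g^{1/2-r/2}$ on $\Ran\PNM$ should be split as $(g^{1/2-r/2} h_\lax^{-1/2}) (h_\lax^{-1/2} g^{1/2-r/2})$ — but here a power mismatch appears: I actually need to route one factor $g^{-r/2}$-worth of decay through the innermost resolvent to produce the $\laM^{-r}$ prefactor, since on $\Ran\PNM$ the operator $g$ has spectrum $\ge \laM$, hence $\|g^{-r}\PNM\| \le \laM^{-r}$ (using $\laM\ge 1$ and the fact that the smallest eigenvalue of $-\Delta$ on $\Ran\PNM$ exceeding that of $\Ran\PM$ is $\ge\laM$, up to the harmless $+1$). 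More precisely I would arrange the $k$-th term as
\[
    \bigl\|\,\underbrace{g^{-1/2+r/2}Vg^{-1/2+r/2}}_{\Enorm{V}}\; g^{1/2-r/2} h_\lax^{-1/2}\,\bigl(h_\lax^{-1/2}\VMN h_\lax^{-1/2}\bigr)^k\, g^{-r}\,\big|_{\Ran\PNM}\; h_\lax^{-1/2} g^{1/2-r/2}\;\underbrace{g^{-1/2+r/2}Vg^{-1/2+r/2}}_{\Enorm{V}}\bigr\|,
\]
so that the two ``bridging'' factors $g^{1/2-r/2}h_\lax^{-1/2}$ on $\Ran\PNM$ are bounded by an absolute constant via an estimate of the same flavour as~\eqref{hal/hlam-est} (using $\re\lax\le\frac12\laM$, possibly absorbing the constant into the $\ls$), the factor $\|g^{-r}\PNM\|\le\laM^{-r}$ gives the stated prefactor $\laM^{-r}$, and each of the $k$ inner factors contributes $4\laM^{-r}\Enorm{V}$ by Lemma~\ref{lem:Vperp-bnd}. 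Summing over $k=0,\ldots,K$ gives exactly~\eqref{UNMK-bnd}.

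The remaining statements are routine consequences. Estimate~\eqref{UNMK-bnd2} follows from~\eqref{UNMK-bnd} by bounding the finite geometric sum $\sum_{k=0}^K [4\laM^{-r}\Enorm{V}]^k$ by $\sum_{k=0}^\infty [4\laM^{-r}\Enorm{V}]^k = (1-4\laM^{-r}\Enorm{V})^{-1}$ under the hypothesis $4\laM^{-r}\Enorm{V}<1$. For~\eqref{UM-bnd}, the same argument applies to the exact effective interaction $\UM(\lax)$: its resolvent $(\HMp-\lax)^{-1}$ on $\Ran\PMp$ equals $(-\Delta+V_M^\perp-\lax)^{-1}$, which admits the \emph{convergent} Neumann expansion $\sum_{k=0}^\infty (-1)^k [h_\lax^{-1} V_M^\perp]^k h_\lax^{-1}$ precisely because $\|h_\lax^{-1/2}V_M^\perp h_\lax^{-1/2}\|\le 4\laM^{-r}\Enorm{V}<1$ (Lemma~\ref{lem:Vperp-bnd} again, now with $\PMp$ in place of $\PNM$ — note the first inequality in~\eqref{q-bnd} shows the $\VMN$ bound is dominated by the $V_M^\perp$ bound, so the same numerical constant works); then the identical term-by-term estimate with $K\to\infty$ yields~\eqref{UM-bnd}.

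\textbf{Main obstacle.} The delicate point is the power counting: one must verify that after peeling off the two copies of $\Enorm{V}$ and the $k$ copies of $h_\lax^{-1/2}\VMN h_\lax^{-1/2}$, there is \emph{exactly} a spare $g^{-r}$ (restricted to $\Ran\PNM$, hence $\le\laM^{-r}$) left over, with the remaining weights $g^{1/2-r/2}h_\lax^{-1/2}$ being $O(1)$ on $\Ran\PNM$ for $\re\lax\le\frac12\laM$. This requires carefully matching $-1/2+r/2$ (from the definition of $\Enorm{\cdot}$), $-1/2$ (from the $h_\lax$-splitting used in Lemma~\ref{lem:Vperp-bnd}), and $-1$ (from each resolvent $\GNM$), and checking that the arithmetic closes: between two consecutive $V$'s there are two half-resolvents contributing weight $g^{-1}$ total in the ``$-1/2+r/2$''-normalization we need weight $g^{-1+r}$, i.e.\ each $V$ should see weight $g^{-1/2+r/2}$ on each side, leaving the discrepancy $g^{-1}\cdot g^{?}=g^{-1+r}$, i.e.\ one extra $g^{r/2}$ per side that is precisely absorbed by splitting off the overall $g^{-r}|_{\Ran\PNM}$; one must confirm the $r>0$ regularity from Assumption~\ref{as:pot} is exactly what makes this surplus available, and that it is the \emph{innermost} resolvent that carries it so that the $\laM^{-r}$ (rather than a larger $\laN^{-r}$) prefactor is obtained.
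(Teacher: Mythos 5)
Your plan is the same as the paper's: split $R_\sigma(\lax)$ into its $K+1$ Neumann terms, insert half--powers of $h_\lax$ to peel off $k$ inner factors $h_\lax^{-1/2}\VMN h_\lax^{-1/2}$ (each $\le 4\laM^{-r}\Enorm{V}$ by Lemma~\ref{lem:Vperp-bnd}), insert half--powers of $g=(-\Delta+1)$ around the outer $V$'s to recognize $\Enorm{V}$, then sum the geometric series. That is exactly the paper's proof and the conclusions \eqref{UNMK-bnd2}, \eqref{UM-bnd} are drawn exactly as you do. However, your displayed factorization of the $k$-th term is incorrect: multiplying out the pieces
\[
(g^{-\frac12+\frac r2}Vg^{-\frac12+\frac r2})\,(g^{\frac12-\frac r2}h_\lax^{-1/2})\,(h_\lax^{-1/2}\VMN h_\lax^{-1/2})^k\,g^{-r}\big|_{\Ran\PNM}\,(h_\lax^{-1/2}g^{\frac12-\frac r2})\,(g^{-\frac12+\frac r2}Vg^{-\frac12+\frac r2})
\]
yields the $k$-th term of $\Usl$ multiplied by an extraneous $g^{-r}|_{\Ran\PNM}$, so the identity you need does not hold. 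Relatedly, the claim that the two bridging factors $g^{1/2-r/2}h_\lax^{-1/2}\PNM$ are ``$O(1)$'' and that a separate spare $g^{-r}$ remains is double-counting: the sharp bound is $\|g^{1/2-r/2}h_\lax^{-1/2}\PNM\|\le 2\laM^{-r/2}$ (this is just \eqref{hal/hlam-est} at exponent $1/2$ composed with $\|g^{-r/2}\PNM\|\le\laM^{-r/2}$, using $-\Delta\ge\laM$ on $\Ran\PNM$ and $\re\lax\le\frac12\laM$), and the two bridges together \emph{are} the source of the $\laM^{-r}$ prefactor; no separate $g^{-r}$ is available or needed. With the spurious factor removed and the bridges estimated at their true rate $\laM^{-r/2}$, the factorization is exact and the resulting bound $4\laM^{-r}\Enorm{V}^2(4\laM^{-r}\Enorm{V})^k$ per term agrees with the paper. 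Since you already flagged the power counting as the delicate point and your final bounds are correct, this is a repairable slip rather than a conceptual gap, but as written the $k=0$ term alone already exposes the mismatch (your decomposition gives $h_\lax^{-1}g^{-r}$ in place of $h_\lax^{-1}$).
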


\begin{proof} 
 	By definition 
 	\eqref{UNMKlam}, we can write $U_\sigma(\lax)$ which is well-defined for $\lax < \ka_M$ as
 \begin{align}
	\Us(\lax)
	= - \sum_{k=0}^K 
	\PM V \PNM h_{\lax}^{-1/2} \Big[ - h_{\lax}^{-1/2} \VMN h_{\lax}^{-1/2} \Big]^k h_{\lax}^{-1/2}  \PNM V \PM.
\end{align}
Using estimate \eqref{q-bnd}, there holds
\[
    \| \Us(\lax) \|_r
	\le   \sum_{k=0}^K \left[4  \laM^{-r}\Enorm{V} \right]^k
	\| h_{-1}^{-1/2 +r/2} \PM V \PNM h_{\lax}^{-1/2} \|    \| h_{\lax}^{-1/2}  \PNM V \PM h_{-1}^{-1/2 +r/2}\| .
\]
and by \eqref{hal/hlam-est}, $\|h_{\lam}^{-1/2}\PM^N V \PM h_{-1}^{-1/2+r/2}\| \le  2 \laM^{-r/2}\|V\|_{r}$, so that we obtain~\eqref{UNMK-bnd}. The bound~\eqref{UNMK-bnd2} is easily obtained from~\eqref{UNMK-bnd} 
and taking $K,N=\infty$ in \eqref{UNMK-bnd2}, we arrive at \eqref{UM-bnd}. 
\end{proof}

\begin{lem}
\label{lem:UN-bnd}
For $\lax < \frac12 \ka_M$, $\laM \ge 1$ and if $4  \laM^{-r}\Enorm{V} < 1$, the following bounds hold
 \begin{align}
      \label{UM'-est}
      \|\UM'(\lax)\|_{r } & \le \frac{1}{\pi (\ka_M - 2\lax)} \laM^{-r}  \frac{4\,\Enorm{V}^2}{1 - 4  \laM^{-r}\Enorm{V}}  .
 \end{align}
\end{lem}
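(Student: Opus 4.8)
The plan is to estimate the derivative $\UM'(\lax)$ via a Cauchy integral formula, exactly mirroring the abstract argument used for $U'(\lax)$ in the proof of Proposition~\ref{prop:H-evs}(iv). Recall from Proposition~\ref{prop:UN-prop}(iv) that $\UM(\lax)$ is complex analytic in $\lax$ on the half-plane $\{\re\lax < \ka_M\}$, hence so is the operator-valued map $\lax \mapsto \UM(\lax)$ when measured in the $\|\cdot\|_r$-norm (the norm is on a fixed space of operators, and analyticity in the resolvent sense transfers). Therefore, for any $\lax$ with $\lax < \tfrac12\ka_M$ real and any radius $R>0$ such that the closed disk $\{\mu\in\C : |\mu-\lax|\le R\}$ is contained in $\{\re\mu < \ka_M\}$, the Cauchy estimate gives
\[
    \|\UM'(\lax)\|_r \le \frac{1}{R}\sup_{|\mu-\lax|=R} \|\UM(\mu)\|_r.
\]

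The key choice is $R = \ka_M - 2\lax$, wait — more carefully, I would take $R$ slightly below $\ka_M - \lax$; but to also keep $\re\mu \le \tfrac12\laM$ so that Lemma~\ref{lem:UNK-bnd} applies with its bound \eqref{UM-bnd}, one needs $\re\mu < \ka_M$ on the circle, i.e. $\lax + R < \ka_M$, giving $R < \ka_M - \lax$. Since the hypothesis is $\lax < \tfrac12\ka_M$, the value $R = \ka_M - 2\lax$ is positive and satisfies $R = \ka_M - 2\lax < \ka_M - \lax$ (as $\lax$ could be negative this needs a small check: if $\lax\ge 0$ then $\ka_M - 2\lax < \ka_M - \lax$ trivially; if $\lax < 0$ one should instead note $\ka_M - 2\lax$ may exceed $\ka_M-\lax$, so more care is needed — likely the intended reading is that $\ka_M - 2\lax$ is simply a convenient lower bound for a valid radius and the $\pi$ comes from a sharper contour). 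In any case, on the circle $|\mu-\lax| = R$ we have $\re\mu \le \lax + R = \ka_M - \lax \le \ka_M$ and (using $\lax<\tfrac12\ka_M \le \tfrac12\laM$ after invoking $\ka_M\le\laM$) also $\re\mu \le \tfrac12\laM$ provided $R$ is taken appropriately; then \eqref{UM-bnd} of Lemma~\ref{lem:UNK-bnd} bounds $\|\UM(\mu)\|_r \le \laM^{-r}\,\frac{4\|V\|_r^2}{1-4\laM^{-r}\|V\|_r}$ uniformly on the circle.

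Combining the Cauchy estimate with this uniform bound yields
\[
    \|\UM'(\lax)\|_r \le \frac{1}{R}\,\laM^{-r}\,\frac{4\,\|V\|_r^2}{1 - 4\laM^{-r}\|V\|_r},
\]
and substituting the chosen radius gives the claimed inequality, with the factor $\tfrac{1}{\pi}$ arising from optimizing the contour (e.g. integrating over a circle and using that $\UM$ is real-symmetric, so that the relevant supremum is attained with an extra $\pi^{-1}$ gain, just as in the passage from the raw Cauchy bound to \eqref{U'-bnd} in Proposition~\ref{prop:H-evs}).

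The main obstacle I anticipate is bookkeeping the admissible radius: one must simultaneously respect (i) $\re\mu < \ka_M$ so that the resolvent $(\HMp-\mu)^{-1}$ exists and $\UM(\mu)$ is defined, and (ii) $\re\mu \le \tfrac12\laM$ so that the explicit bound from Lemma~\ref{lem:UNK-bnd} is in force; reconciling these with the stated radius $\ka_M - 2\lax$ (and accounting for the possibility $\lax<0$) is the delicate point, together with justifying the $\pi^{-1}$ improvement over the naive Cauchy constant. Everything else is a direct transcription of the abstract Cauchy-estimate argument already carried out for $U'(\lax)$.
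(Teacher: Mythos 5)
Your overall strategy coincides with the paper's: use the analyticity of $\UM(\lax)$ on $\{\re\mu<\ka_M\}$ from Proposition~\ref{prop:UN-prop}(iv), apply a Cauchy estimate for the derivative on a circle centred at $\lax$, and bound $\|\UM(\mu)\|_r$ uniformly on that circle by \eqref{UM-bnd} of Lemma~\ref{lem:UNK-bnd}. However, the one quantitative step the lemma actually hinges on --- the choice of radius --- is left unresolved in your write-up, and this is a genuine gap. The paper takes $R_M(\lax)=\tfrac12\ka_M-\lax$, which is positive precisely because of the hypothesis $\lax<\tfrac12\ka_M$; on the circle $|\mu-\lax|=R_M(\lax)$ one then has $\re\mu\le\lax+R_M(\lax)=\tfrac12\ka_M$, which is simultaneously strictly below $\ka_M$ (so $\UM(\mu)$ is defined and analytic on the closed disk) and $\le\tfrac12\laM$ (since $\ka_M\le\laM$), so \eqref{UM-bnd} applies uniformly with no case distinction on the sign of $\lax$. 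Your tentative radius $R=\ka_M-2\lax$ does not work: for $\lax<0$ it exceeds $\ka_M-\lax$, so the disk leaves the domain of analyticity, and even for $\lax\ge0$ the circle reaches $\re\mu=\ka_M-\lax$, which can exceed $\tfrac12\laM$, so \eqref{UM-bnd} is not available there. You flag exactly this difficulty as ``the delicate point'' but never resolve it, whereas resolving it is the content of the proof.

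Relatedly, your explanation of the factor $1/\pi$ (``optimizing the contour'', a ``$\pi^{-1}$ gain'' from real-symmetry) has no mathematical basis and is not where the constant comes from. In the paper the constant arises from writing the Cauchy estimate in the form $\|\UM'(\lax)\|_{r}\le \frac{1}{2\pi R_M(\lax)}\sup_{|\mu-\lax|=R_M(\lax)}\|\UM(\mu)\|_{r}$ (the same convention used to derive \eqref{U'-bnd}), so that $\frac{1}{2\pi R_M(\lax)}=\frac{1}{\pi(\ka_M-2\lax)}$ and \eqref{UM-bnd} supplies the remaining factor. Note that the textbook Cauchy inequality, which is what you wrote, gives $\frac{1}{R}\sup$ rather than $\frac{1}{2\pi R}\sup$; with the correct radius it yields $\frac{2}{\ka_M-2\lax}$ in place of $\frac{1}{\pi(\ka_M-2\lax)}$, i.e. the stated bound only up to a factor $2\pi$, and no symmetry argument recovers the $\pi$. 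Once the radius $\tfrac12\ka_M-\lax$ and the paper's form of the Cauchy estimate are in place, the rest of your argument is a direct transcription of the abstract proof of \eqref{U'-bnd}, as you intended.
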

\begin{proof} 
Since from Proposition \ref{prop:UN-prop}(iv), $\UM(\lax)$ is complex analytic in $\lax$ for $\re\lax <  \ka_M$, by the Cauchy integral formula, we have 
\[
    \|U_M'(\lax)\|_{r}\le \frac{1}{2\pi R_M(\lax)}\sup_{\substack{\mu \in \C, \\ |\mu-\lax|=R_M(\lax)}} \|U_M(\mu)\|_{r},
\] 
with $R_M(\lax) = \frac12\ka_M - \lax >0 $. Using~\eqref{UM-bnd} and noting that $\frac12 \ka_M\le \frac12\laM$, we obtain~\eqref{UM'-est}.
\end{proof}

\begin{lem}
\label{lem:UN-UNK-bnd}
For $\lax < \min(\ka_M,\frac12\laM)$, $\laN \ge \laM >1$  and if $4  \laM^{-r}\Enorm{V} < 1$ and $4  \laN^{-r}\Enorm{V} + \frac{16  \laM^{-2r}\Enorm{V}^2}{1 - 4 \rho_M^{-r} \Enorm{V}}<1$,
 the following bound holds
 \begin{align}
    \label{UM-Us-bnd}
&	\|\UM(\lax)-\Us(\lax)\|_{r}\notag\\ & \le  \;
          \frac{4 \laN^{-r} \Enorm{V}^2}{1 - 4  \laN^{-r}\Enorm{V} - \frac{16  \laM^{-2r}\Enorm{V}^2}{1 - 4 \rho_M^{-r} \Enorm{V}}} \left[ 1 +  \frac{4 \laM^{-r} \Enorm{V}}{1 - 4 \rho_M^{-r} \Enorm{V}} \right]^2
         + 
	\frac{4 \laM^{-r} \,\Enorm{V}^2}{1 - 4 \laM^{-r} \Enorm{V}} \left[4  \laM^{-r}\Enorm{V} \right]^{K+1}.
\end{align}
\end{lem}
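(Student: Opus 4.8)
The plan is to estimate the difference $\UM(\lax)-\Us(\lax)$ by splitting it into two parts according to the two approximations made in passing from $\UM$ to $\Us$: (i) the truncation of the Neumann series at order $K$, and (ii) the replacement of the full perpendicular resolvent $(\HMp-\lax)^{-1}$ by the finite-dimensional one using $\PNM$ instead of $\PMp$. Concretely, I would introduce an intermediate object $\widehat U_M(\lax)$, the operator obtained by keeping the \emph{full} (untruncated) Neumann series but with the projections $\PMp$ replaced by $\PNM$, i.e. $\widehat U_M(\lax) = -\PM V\PNM (h_\lax + \VMN)^{-1}|_{\Ran\PNM}\PNM V\PM$. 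Then write
\[
  \UM(\lax)-\Us(\lax) = \big(\UM(\lax)-\widehat U_M(\lax)\big) + \big(\widehat U_M(\lax)-\Us(\lax)\big),
\]
and bound the two terms separately, the first giving the $\laN^{-r}$-term in \eqref{UM-Us-bnd} and the second giving the $[4\laM^{-r}\Enorm{V}]^{K+1}$-term.

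For the truncation term $\widehat U_M(\lax)-\Us(\lax)$, I would use the geometric-series tail: with $T := -h_\lax^{-1/2}\VMN h_\lax^{-1/2}$, which by Lemma~\ref{lem:Vperp-bnd} satisfies $\|T\|\le 4\laM^{-r}\Enorm{V}<1$, the difference is
\[
  \widehat U_M(\lax)-\Us(\lax) = -\PM V\PNM h_\lax^{-1/2}\Big(\sum_{k=K+1}^\infty T^k\Big) h_\lax^{-1/2}\PNM V\PM,
\]
and estimating the factors $\| h_{-1}^{-1/2+r/2}\PM V\PNM h_\lax^{-1/2}\|$ exactly as in the proof of Lemma~\ref{lem:UNK-bnd} (using \eqref{hal/hlam-est}) together with $\sum_{k\ge K+1}\|T\|^k = \|T\|^{K+1}/(1-\|T\|)$ yields precisely the second summand on the right of \eqref{UM-Us-bnd}. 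For the discretization term $\UM(\lax)-\widehat U_M(\lax)$, the key is a resolvent-type identity comparing $(\HMp-\lax)^{-1}$ on $\Ran\PMp$ with $(h_\lax+\VMN)^{-1}$ on $\Ran\PNM$. Writing $G_M := \PMp h_\lax^{-1}\PMp$ and $G_\sigma := \PNM h_\lax^{-1}\PNM$, one has the Neumann expansions $(\HMp-\lax)^{-1}|_{\Ran\PMp} = \sum_k (-G_M V_M^\perp)^k G_M$ and similarly for the $\widehat U_M$ object; the difference is governed by $\PMp - \PNM = \PN^\perp$, i.e. by the high modes $|\bk|\gtrsim \laN^{1/2}$. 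On those modes $h_\lax^{-1}$ is small, of size $\laN^{-1}$, and conjugating $V$ by $h_{-1}^{\pm(1/2-r/2)}$ produces the gain $\laN^{-r}$; the rest of the bookkeeping — the factor $[1+4\laM^{-r}\Enorm{V}/(1-4\laM^{-r}\Enorm{V})]^2$ from the two ``boundary'' resolvent factors $\PM V\PNM h_\lax^{-1/2}$ on each side, and the denominator $1 - 4\laN^{-r}\Enorm{V} - 16\laM^{-2r}\Enorm{V}^2/(1-4\laM^{-r}\Enorm{V})$ coming from summing a Neumann series whose ratio is controlled by the \emph{second} smallness hypothesis — is then routine, matching the first summand in \eqref{UM-Us-bnd}.

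The main obstacle is organizing the algebra of the discretization-error term $\UM(\lax)-\widehat U_M(\lax)$ cleanly: one must expand both resolvents, subtract term by term, and track where the single ``extra'' projection $\PN^\perp$ that distinguishes $\PMp$ from $\PNM$ appears in each summand, showing that every surviving term carries at least one factor of $h_\lax^{-1}$ restricted to modes $|\bk|^2\ge\laN$, hence at least one factor $\laN^{-r}$ after the $h_{-1}^{\pm(1/2-r/2)}$ conjugation — while the \emph{remaining} factors are only controlled by $\laM^{-r}$, which is why the $\laM$-dependent denominator and bracket appear. A convenient way to make this rigorous is a telescoping/second-resolvent identity: with $A$ and $B$ the two resolvents (on their respective ranges, suitably embedded), $A - B = A(B^{-1}-A^{-1})B$ modulo the range-mismatch term $\PM V(\PMp-\PNM)(\cdots)$, and then iterate. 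The two quantitative smallness assumptions in the statement are exactly what is needed to ensure all the Neumann series involved converge and to absorb the cross terms; once they are in force, the final bound \eqref{UM-Us-bnd} follows by collecting the two contributions.
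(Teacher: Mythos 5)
Your decomposition is exactly the paper's: the intermediate operator you call $\widehat U_M(\lax)$ is the paper's $\UMN(\lax)=-\PM V\PNM(\HMN-\lax)^{-1}\PNM V\PM$, and your treatment of the truncation term $\widehat U_M(\lax)-\Us(\lax)$ (geometric tail of the Neumann series, factors estimated as in Lemma~\ref{lem:UNK-bnd} via \eqref{q-bnd} and \eqref{hal/hlam-est}) coincides with the paper's, giving the second summand of \eqref{UM-Us-bnd}. The only real difference is how the discretization error $\UM(\lax)-\UMN(\lax)$ is organized. You propose a term-by-term Neumann/second-resolvent comparison and note yourself that the identity $A-B=A(B^{-1}-A^{-1})B$ does not apply directly because the two resolvents live on different subspaces ($\Ran\PMp$ versus $\Ran\PNM$); the paper resolves precisely this range mismatch by an explicit $2\times 2$ block inversion of $(\HMp-\lax)^{-1}$ with respect to the splitting $\Ran\PMp=\Ran\PNM\oplus\Ran\PNp$. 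The Schur-complement block $A$ supported on $\Ran\PNp$ then appears in each of the four resulting terms, which is what produces the single factor $\laN^{-r}$, the squared bracket $\bigl[1+4\laM^{-r}\Enorm{V}/(1-4\laM^{-r}\Enorm{V})\bigr]^2$ from the two boundary resolvent factors, and the compound denominator $1-4\laN^{-r}\Enorm{V}-16\laM^{-2r}\Enorm{V}^2/(1-4\laM^{-r}\Enorm{V})$ from inverting the Schur complement under your second smallness hypothesis — exactly the structure you anticipate. So your plan is correct in substance; the block-inversion (Schur complement) identity is the device that turns your ``routine bookkeeping'' step into a clean finite computation, and carrying out your iterated telescoping instead would amount to re-deriving it.
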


\begin{proof}
We first write $\UM(\lax)-\Us(\lax)$ as 
\begin{align}
	\label{UN-Us-expr}
	&\UM(\lax)-\Us(\lax)
	=
	(\UM(\lax)-\UMNl) + (\UMNl-\Us(\lax)),
\end{align}
where, using the notation $\HMN = \PNM\Hcal\PNM$
\begin{align*}
    \UMNl
	:=&
	-\PM V\PNM (\HMN - \lax)^{-1} \PNM V\PM
	\\
	=&
	-\PM V\PNM (-\Delta + \VMN - \lax)^{-1} \PNM V\PM.
\end{align*}

Since $\UMl = -\PM V\PMp (\HMp- \lax)^{-1} \PMp V\PM$, with  $\HMp = \PMp \Hcal \PMp$,
the first term $\UM(\lax)-\UMNl$ is estimated as follows.

Denoting by $\calH_N^\perp = \PNp \calH \PNp$, and the Schur complement
\[
A = \PNp \left( (\calH_N^\perp - \lax \PNp ) - \PNp V \PNM (\calH_M^\perp - \lax)^{-1} \PNM V \PNp \right)^{-1} \PNp, 
\]
there holds, using a block matrix inversion
\begin{align}
    \PMp (\HMp- \lax)^{-1} \PMp  =& 
    \PNM (\HMN- \lax)^{-1} \PNM + \PNM (\HMN- \lax)^{-1} \PNM V A V \PNM (\HMN- \lax)^{-1} \PNM \nonumber \\
    & - \PNM (\HMN- \lax)^{-1} \PNM V A - A V \PNM (\HMN- \lax)^{-1} \PNM + A. 
    \label{eq:matrix_inversion}
\end{align}
Therefore, $\UM(\lax)-\UMNl$ can be decomposed into four terms as
\begin{align*}
    \UM(\lax)-\UMNl = & -\PM V \PNM (\HMN- \lax)^{-1} \PNM V A V \PNM (\HMN- \lax)^{-1} \PNM  V\PM, \\
    & + \PM V \PNM (\HMN- \lax)^{-1} \PNM V A  V\PM, \\
     & + \PM V A V \PNM (\HMN- \lax)^{-1} \PNM  V\PM, \\
       & -\PM V  A  V\PM.
\end{align*}
Then, the $r$-norm can be estimated as 
\begin{align*}
    \|\UM(\lax)-\UMNl\|_r  \le \; & 
    \Enorm{V}^2 \Big[  \| h_{-1}^{1/2-r/2} \PNM (\HMN- \lax)^{-1} \PNM V A V \PNM (\HMN- \lax)^{-1} \PNM 
    h_{-1}^{1/2-r/2} \| \\
    &
    + \| h_{-1}^{1/2-r/2} \PNM (\HMN- \lax)^{-1} \PNM V A 
    h_{-1}^{1/2-r/2} \| \\
    & + \| h_{-1}^{1/2-r/2} A V \PNM (\HMN- \lax)^{-1} \PNM 
    h_{-1}^{1/2-r/2} \| 
    \\
    &
    + 
    \| h_{-1}^{1/2-r/2} A
    h_{-1}^{1/2-r/2} \| 
    \Big].
\end{align*}
Introducing appropriate $ h_{-1}^{1/2-r/2}$ and  $h_{-1}^{-1/2+r/2}$ terms, we obtain 
\begin{align*}
    \|\UM(\lax)-\UMNl\|_r  \le \; & 
    \Enorm{V}^2 \Big[  \| h_{-1}^{1/2-r/2} \PNM (\HMN- \lax)^{-1} \PNM h_{-1}^{1/2-r/2} \|^2  \Enorm{V}^2 \| h_{-1}^{1/2-r/2} A h_{-1}^{1/2-r/2} \| \\
    &
    + 2 \, \| h_{-1}^{1/2-r/2} \PNM (\HMN- \lax)^{-1} \PNM h_{-1}^{1/2-r/2} \|  \Enorm{V} \| h_{-1}^{1/2-r/2} A 
    h_{-1}^{1/2-r/2} \| \\
    &
    + 
    \| h_{-1}^{1/2-r/2} A
    h_{-1}^{1/2-r/2} \| 
    \Big] \\
     \le \; & 
     \| h_{-1}^{1/2-r/2} A
    h_{-1}^{1/2-r/2} \|  \Enorm{V}^2 \\
    & \times \Big[ 1 +  \| h_{-1}^{1/2-r/2} \PNM (\HMN- \lax)^{-1} \PNM h_{-1}^{1/2-r/2} \|  \Enorm{V} \Big]^2.
\end{align*}
We are therefore left with the estimation of $ \| h_{-1}^{1/2-r/2} \PNM (\HMN- \lax)^{-1} \PNM h_{-1}^{1/2-r/2} \|$ and $\| h_{-1}^{1/2-r/2} A
    h_{-1}^{1/2-r/2} \|$.
    First, noting from~\eqref{q-bnd} that
\begin{align}
    \label{eq:HMperpbound_new}
    \| h_{\lambda}^{1/2} \PNM (\HMp- \lax)^{-1} \PNM 
    h_{\lambda}^{1/2} \| 
    & \le \| (I + h_{\lax}^{-1/2} V_M^\perp h_{\lax}^{-1/2} )^{-1}  \| 
     \le \frac{1}{1 - 4 \rho_M^{-r} \Enorm{V}},
\end{align}
and using that
\begin{align}
    \label{eq:AuxLem7}
     \| h_{-1}^{1/2-r/2} h_\lambda^{{-1/2}} P_M^N \| \le 2 \laM^{-r/2},
\end{align}
we obtain
\begin{equation}
    \| h_{-1}^{1/2-r/2} \PNM (\HMN- \lax)^{-1} \PNM h_{-1}^{1/2-r/2} \| \le
     \frac{4 \laM^{-r}}{1 - 4 \rho_M^{-r} \Enorm{V}}.
\end{equation}
Second, 
\begin{align*}
    \| h_{-1}^{1/2-r/2} A
    h_{-1}^{1/2-r/2} \| = \| h_{-1}^{1/2-r/2} \PNp \left[ (\calH_N^\perp - \lax \PNp ) - \PNp V \PNM (\calH_M^\perp - \lax)^{-1} \PNM V \PNp \right]^{-1} \PNp
    h_{-1}^{1/2-r/2} \|.
\end{align*}
Noting that 
\begin{align*}
    \| h_{-1}^{1/2-r/2} h_\lambda^{{-1/2}} P_N^\perp \| \le 2 \laN^{-r/2},
\end{align*}
there holds
\[
 \| h_{-1}^{1/2-r/2} A
    h_{-1}^{1/2-r/2} \| \le 4 \laN^{-r} \| h_\lambda^{{1/2}} \PNp \left[ (\calH_N^\perp - \lax \PNp ) - \PNp V \PNM (\calH_M^\perp - \lax)^{-1} \PNM V \PNp \right]^{-1} \PNp
    h_\lambda^{{1/2}} \|.
\]
Factorizing $h_\lambda$, we deduce
\[
 \| h_{-1}^{1/2-r/2} A
    h_{-1}^{1/2-r/2} \| \le 4 \laN^{-r} \big\| \big[ I + h_\lambda^{-1/2} V_N^\perp h_\lambda^{-1/2} -  h_\lambda^{-1/2} \PNp V \PNM (\calH_M^\perp - \lax)^{-1} \PNM V \PNp  h_\lambda^{-1/2} \big]^{-1} \big\|.
\]
From~\eqref{q-bnd} with $N$ in place of $M$, we obtain
\begin{equation}
    \label{eq:temp_3003}
        \| h_\lambda^{-1/2} V_N^\perp h_\lambda^{-1/2}\| \le  4  \laN^{-r}\Enorm{V}.
\end{equation}
Moreover, 
\[
    \| h_\lambda^{-1/2} \PNp V \PNM (\calH_M^\perp - \lax)^{-1} \PNM V \PNp  h_\lambda^{-1/2} \| 
    \le 
    \| h_\lambda^{-1/2} \PNp V \PNM  h_\lambda^{-1/2} \|^2
    \| h_\lambda^{1/2} \PNM (\calH_M^\perp - \lax)^{-1} \PNM h_\lambda^{1/2} \|,
\]
which, from~\eqref{q-bnd} and~\eqref{eq:HMperpbound_new}, leads to
\[
    \| h_\lambda^{-1/2} \PNp V \PNM (\calH_M^\perp - \lax)^{-1} \PNM V \PNp  h_\lambda^{-1/2} \| \le  \frac{16  \laM^{-2r}\Enorm{V}^2}{1 - 4 \rho_M^{-r} \Enorm{V}}.
\]
Combining this last line with~\eqref{eq:temp_3003}, we obtain the bound
\[
    \| h_{-1}^{1/2-r/2} A
    h_{-1}^{1/2-r/2} \| \le 4 \laN^{-r} \frac{1}{1 - 4  \laN^{-r}\Enorm{V} - \frac{16  \laM^{-2r}\Enorm{V}^2}{1 - 4 \rho_M^{-r} \Enorm{V}}}.
\]
This leads to the following bound for the difference $\UM(\lax)-\UMNl$
\begin{equation}
    \label{eq:UM-UMN}
    \|\UM(\lax)-\UMNl\|_r  \le 
         \laN^{-r} \frac{4 \Enorm{V}^2}{1 - 4  \laN^{-r}\Enorm{V} - \frac{16  \laM^{-2r}\Enorm{V}^2}{1 - 4 \rho_M^{-r} \Enorm{V}}} \left[ 1 +  \frac{4 \laM^{-r} \Enorm{V}}{1 - 4 \rho_M^{-r} \Enorm{V}} \right]^2.
\end{equation}

For the second term on the right handside of~\eqref{UN-Us-expr}, 
we write $\UMN(\lax)-\Us(\lax)$ as 
\begin{align}
    \label{eq:AuxLem7-2}
	\UMN(\lax)-\Us(\lax)
	= - \sum_{k=K+1}^\infty
	\PM V \PNM h_{\lax}^{-1/2} \Big[ - h_{\lax}^{-1/2} \VMN h_{\lax}^{-1/2} \Big]^k h_{\lax}^{-1/2}  \PNM V \PM.
\end{align}
Using~\eqref{q-bnd} and~\eqref{hal/hlam-est}, we obtain
\begin{align*}
	\| \UMN(\lax)-\Us(\lax) \|_r
	\le & \Enorm{V}^2 \; \|h_{-1}^{1/2-r/2} h_{\lambda}^{-1/2} P_M^\perp\|^2 \sum_{k=K+1}^\infty
	\| h_{\lax}^{-1/2} \VMN h_{\lax}^{-1/2} \|^k \\
	\le & 4 \, \Enorm{V}^2 \; \laM^{-r}  \sum_{k=K+1}^\infty
	 \left[4  \laM^{-r}\Enorm{V} \right]^k,
\end{align*}
from which we deduce that
\begin{equation}
    \label{eq:UMN-Us}
    \| \UMN(\lax)-\Us(\lax) \|_r
	\le \frac{4\, \Enorm{V}^2}{1 - 4  \laM^{-r}\Enorm{V}} \; \laM^{-r}  
	 \left[4  \laM^{-r}\Enorm{V} \right]^{K+1}.
\end{equation}
Combining~\eqref{eq:UM-UMN} and~\eqref{eq:UMN-Us}, we obtain~\eqref{UM-Us-bnd}.
\end{proof}

\begin{lem}
   \label{lem:UNdist}
   For $\lax < \mu < \min(\ka_M, \frac12\laM)$, $\laM \ge 1$ and if $4  \laM^{-r}\Enorm{V} < 1$,
   there holds
    \begin{align}
        \label{eq:UNdist}
        \|\UM(\mu) - \UM(\lax)\|_r
        &
        \le |\mu-\lax| \frac{\laM^{-r}}{\pi (\ka_M - 2\lax)}   \frac{4\,\Enorm{V}^2}{1 - 4  \laM^{-r}\Enorm{V}}
   \end{align}
\end{lem}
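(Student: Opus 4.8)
The plan is to obtain this Lipschitz-type estimate directly from the pointwise bound on the derivative $\UM'$ already established in Lemma~\ref{lem:UN-bnd}, combined with the fundamental theorem of calculus. The only genuine input beyond Lemma~\ref{lem:UN-bnd} is the analyticity recorded in Proposition~\ref{prop:UN-prop}(iv), which legitimizes the calculus manipulations with operator-valued functions.

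Concretely, I would proceed as follows. First, by Proposition~\ref{prop:UN-prop}(iv), $\UM$ is complex-analytic on $\{\re z < \ka_M\}$, hence in particular norm-differentiable along the real segment $[\lax,\mu]$, which lies in $(-\infty,\ka_M)$ under the stated hypotheses. Therefore the operator-valued identity
\[
    \UM(\mu) - \UM(\lax) = \int_\lax^\mu \UM'(t)\, dt
\]
holds in $\mathcal{L}(\XM)$, equivalently in the $\|\cdot\|_r$-topology. Taking $\|\cdot\|_r$-norms and moving the norm inside the integral yields
\[
    \|\UM(\mu) - \UM(\lax)\|_r \;\le\; \int_\lax^\mu \|\UM'(t)\|_r\, dt \;\le\; |\mu-\lax|\;\sup_{t\in[\lax,\mu]} \|\UM'(t)\|_r .
\]
Next, since $4\laM^{-r}\Enorm{V}<1$, Lemma~\ref{lem:UN-bnd} (estimate~\eqref{UM'-est}) is available along the interval and gives
\[
    \|\UM'(t)\|_r \;\le\; \frac{1}{\pi(\ka_M - 2t)}\,\laM^{-r}\,\frac{4\,\Enorm{V}^2}{1 - 4\laM^{-r}\Enorm{V}} .
\]
It then remains only to control $t\mapsto (\ka_M - 2t)^{-1}$ over $[\lax,\mu]$, which is monotone on $(-\infty,\tfrac12\ka_M)$, so one evaluates it at the appropriate endpoint and substitutes into the previous display to reach exactly~\eqref{eq:UNdist}.

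I do not expect a real obstacle here; this lemma is essentially a corollary of Lemma~\ref{lem:UN-bnd}. The two points deserving a line of care are: (i) justifying the operator-valued fundamental theorem of calculus, which is immediate once one invokes analyticity from Proposition~\ref{prop:UN-prop}(iv) (so that $t\mapsto \UM'(t)$ is norm-continuous on $[\lax,\mu]$); and (ii) verifying that the regime $\lax<\mu<\min(\ka_M,\tfrac12\laM)$ indeed keeps every $t\in[\lax,\mu]$ within the range of validity of the Cauchy-estimate radius $R_M(t)=\tfrac12\ka_M-t$ used to prove~\eqref{UM'-est}, so that the integrand bound is legitimate throughout and the factor $(\ka_M-2t)^{-1}$ is correctly monotone when passing to the supremum. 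Once these are settled, the estimate follows by the one-line computation above.
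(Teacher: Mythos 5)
Your proof takes exactly the same route as the paper: integrate $\UM'$ over $[\lax,\mu]$ using the fundamental theorem of calculus (justified by the analyticity of Proposition~\ref{prop:UN-prop}(iv)), pass the $\|\cdot\|_r$-norm inside the integral, bound by $|\mu-\lax|\cdot\sup_{t\in[\lax,\mu]}\|\UM'(t)\|_r$, and invoke estimate~\eqref{UM'-est} of Lemma~\ref{lem:UN-bnd}. So the strategy is identical and correct.

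One point you raise but do not follow through on cleanly is the endpoint of the supremum. As you note, $t\mapsto(\ka_M-2t)^{-1}$ is \emph{increasing} on $(-\infty,\tfrac12\ka_M)$, so $\sup_{t\in[\lax,\mu]}(\ka_M-2t)^{-1}=(\ka_M-2\mu)^{-1}$, not $(\ka_M-2\lax)^{-1}$. Since $\lax<\mu$, the quantity $(\ka_M-2\mu)^{-1}$ is the larger of the two, so plugging in the supremum honestly gives a bound with $\mu$ in the denominator, whereas the statement~\eqref{eq:UNdist} has $\lax$. Your claim that evaluating ``at the appropriate endpoint'' yields ``exactly~\eqref{eq:UNdist}'' therefore does not hold as written: either the stated bound should carry $(\ka_M-2\mu)^{-1}$, or one must argue separately why $\lax$ suffices. (The same ambiguity appears in the paper's own terse proof, and propagates into the use of $\min(\las,\laxsi)$ in Lemma~\ref{lem:PertEstim}; it looks like a consistent sign slip rather than something you missed, but when you explicitly flag monotonicity as a point of care you should not then assert it produces the stated inequality.) Similarly, Lemma~\ref{lem:UN-bnd} is stated under $\lax<\tfrac12\ka_M$, while the hypothesis here is $\mu<\min(\ka_M,\tfrac12\laM)$, which does not formally imply $\mu<\tfrac12\ka_M$ because $\ka_M<\laM$; this is worth noting explicitly when you check that the Cauchy radius $R_M(t)=\tfrac12\ka_M-t$ stays positive over $[\lax,\mu]$.
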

\begin{proof}[Proof of Lemma~\ref{lem:UNdist}]
Without loss of generality, we assume that $\lax<\mu$.
Writing 
\[
    \UM(\mu) - \UM(\lax)
    =
    \int_{\lax}^\mu\UM'(s) ds,
\]
yields
\[
    \|\UM(\mu) - \UM(\lax)\|_r
    \le |\mu-\lax| \max_{s\in [\lax,\mu]} \| \UM'(s) \|_r.
\]
We conclude by applying~\eqref{UM'-est} of Lemma~\ref{lem:UN-bnd}.
\end{proof}

\section{Proof of the main results}
\label{sec:main-proof}
The goal of this section is to provide the proof for Theorem~\ref{thm:main}.

We first prove the following technical lemmas which will be useful later.
\begin{lem}
\label{lem:HW}
For $H,W$ such that $H=-\Delta+W$, there holds for $\aaa\ge (2\,\|W\|_r)^{1/r}$ such that $H+\aaa>0$
\[
    \| A \|_{H,\aaa} 
    \le 
    2 \, \| A \|_{-\Delta,\aaa}
    \le 
    2 \alpha^{-r} \| A \|_{r}.
\]
\end{lem}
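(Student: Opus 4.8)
The plan is to prove the two inequalities separately, starting with the second one since it is purely about the free operator. For the bound $\| A \|_{-\Delta,\aaa} \le \alpha^{-r} \| A \|_r$, I would write out the definitions: $\| A \|_{-\Delta,\aaa} = \|(-\Delta+\aaa)^{-1/2} A (-\Delta+\aaa)^{-1/2}\|$ and $\| A \|_r = \|(-\Delta+1)^{-1/2+r/2} A (-\Delta+1)^{-1/2+r/2}\|$. The idea is to insert the identity in the form $(-\Delta+1)^{-1/2+r/2}(-\Delta+1)^{1/2-r/2}$ on each side of $A$, so that
\[
(-\Delta+\aaa)^{-1/2} A (-\Delta+\aaa)^{-1/2} = B^* \big[(-\Delta+1)^{-1/2+r/2} A (-\Delta+1)^{-1/2+r/2}\big] B,
\]
where $B = (-\Delta+1)^{1/2-r/2}(-\Delta+\aaa)^{-1/2}$. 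Taking operator norms and using submultiplicativity, it remains to bound $\|B\|^2 = \|(-\Delta+1)^{1/2-r/2}(-\Delta+\aaa)^{-1}(-\Delta+1)^{1/2-r/2}\|$. This is a Fourier multiplier, so its norm is $\sup_{k} (|k|^2+1)^{1/2-r/2}(|k|^2+\aaa)^{-1/2}(|k|^2+1)^{1/2-r/2} = \sup_k (|k|^2+1)^{1-r}/(|k|^2+\aaa)$. Since $\aaa \ge 1$ (which follows from $\aaa \ge (2\|W\|_r)^{1/r}$ only if $\|W\|_r \ge 1/2$, so I should instead just note we need $\aaa\ge 1$, or argue directly), one checks $(|k|^2+1)^{1-r}/(|k|^2+\aaa) \le (|k|^2+\aaa)^{1-r}/(|k|^2+\aaa) = (|k|^2+\aaa)^{-r} \le \aaa^{-r}$. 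Hence $\|B\|^2 \le \aaa^{-r}$ and the second inequality follows.

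For the first inequality $\| A \|_{H,\aaa} \le 2 \| A \|_{-\Delta,\aaa}$, the strategy is to compare the resolvents of $H = -\Delta + W$ and $-\Delta$. Write $\| A \|_{H,\aaa} = \|(H+\aaa)^{-1/2} A (H+\aaa)^{-1/2}\|$ and insert $(-\Delta+\aaa)^{-1/2}(-\Delta+\aaa)^{1/2}$ on each side, giving
\[
(H+\aaa)^{-1/2} A (H+\aaa)^{-1/2} = C^* \big[(-\Delta+\aaa)^{-1/2} A (-\Delta+\aaa)^{-1/2}\big] C,
\]
with $C = (-\Delta+\aaa)^{1/2}(H+\aaa)^{-1/2}$. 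So it suffices to show $\|C\|^2 = \|(-\Delta+\aaa)^{1/2}(H+\aaa)^{-1}(-\Delta+\aaa)^{1/2}\| \le 4$, equivalently $\|C\| \le 2$. Now $(-\Delta+\aaa)^{1/2}(H+\aaa)^{-1}(-\Delta+\aaa)^{1/2} = (I + S)^{-1}$ where $S = (-\Delta+\aaa)^{-1/2} W (-\Delta+\aaa)^{-1/2}$, which is self-adjoint, and $\|S\| = \|W\|_{-\Delta,\aaa} \le \aaa^{-r}\|W\|_r$ by the already-proved second inequality applied to $A = W$. The hypothesis $\aaa \ge (2\|W\|_r)^{1/r}$ gives $\aaa^r \ge 2\|W\|_r$, hence $\|S\| \le \tfrac12$. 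Therefore $I+S \ge \tfrac12 I$ and $\|(I+S)^{-1}\| \le 2$, giving $\|C\|^2 \le 2 \le 4$; actually $\|C\|^2 = \|(I+S)^{-1}\| \le 2$, which is even better than needed, so $\|C\| \le \sqrt 2 \le 2$. Combining, $\| A \|_{H,\aaa} \le \|C\|^2 \| A \|_{-\Delta,\aaa} \le 2\, \| A \|_{-\Delta,\aaa}$.

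The main obstacle, though a minor one, is being careful about the non-commutativity: $(H+\aaa)^{-1}$ and $(-\Delta+\aaa)^{-1}$ do not commute, so the factorization $(-\Delta+\aaa)^{1/2}(H+\aaa)^{-1}(-\Delta+\aaa)^{1/2} = (I+S)^{-1}$ needs the identity $H+\aaa = (-\Delta+\aaa)^{1/2}(I+S)(-\Delta+\aaa)^{1/2}$, which is just the factorization of $H+\aaa = (-\Delta+\aaa) + W$ after conjugating by $(-\Delta+\aaa)^{-1/2}$; since $-\Delta+\aaa>0$ this is legitimate, and $S$ is bounded and self-adjoint so $I+S$ is boundedly invertible once $\|S\|<1$. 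One should also confirm that the form domain issues are benign — since $W$ is form-bounded with relative bound $<1$ (indeed $\le\tfrac12$), $H+\aaa$ is a positive self-adjoint operator with the same form domain as $-\Delta+\aaa$, so all the square roots and their domains match up and the manipulations are justified by, e.g., the representation theorem for quadratic forms. I would also record that the only place the hypothesis on $\aaa$ enters is to make $\|S\| \le \tfrac12$; any $\aaa$ with $\aaa^r \ge 2\|W\|_r$ and $\aaa\ge 1$ works, the latter being needed only in the Fourier-multiplier estimate $(|k|^2+\aaa)^{-r}\le\aaa^{-r}$ (valid since $r>0$).
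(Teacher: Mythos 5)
Your proof is correct and follows essentially the same route as the paper. For the first inequality, both arguments reduce $\|A\|_{H,\aaa}\le c\,\|A\|_{-\Delta,\aaa}$ to a bound on the conjugation operator relating the two quadratic forms: the paper estimates $\|h_{-\aaa}^{1/2}(H+\aaa)^{-1/2}\|^2\le 2$ directly via $\|h_{-\aaa}^{1/2}u\|^2 = \|v\|^2 - \langle u,Wu\rangle$ and $|\langle u,Wu\rangle|\le\|W\|_{-\Delta,\aaa}\|h_{-\aaa}^{1/2}u\|^2$ with $\|W\|_{-\Delta,\aaa}\le\tfrac12$, whereas you factor $H+\aaa=h_{-\aaa}^{1/2}(I+S)h_{-\aaa}^{1/2}$ and bound $\|(I+S)^{-1}\|\le 2$; since $\|h_{-\aaa}^{1/2}(H+\aaa)^{-1/2}\|^2 = \|(I+S)^{-1}\|$, these are the same estimate in different clothing, and you correctly note the form-domain justification. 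What you add is a genuine proof of the second inequality $\|A\|_{-\Delta,\aaa}\le\aaa^{-r}\|A\|_r$ via the Fourier-multiplier estimate for $B=(-\Delta+1)^{1/2-r/2}(-\Delta+\aaa)^{-1/2}$; the paper simply invokes this inequality without argument. In doing so you rightly flag that $\aaa\ge 1$ is needed for the step $(|k|^2+1)^{1-r}\le(|k|^2+\aaa)^{1-r}$, and this does not follow from $\aaa\ge(2\|W\|_r)^{1/r}$ unless $\|W\|_r\ge\tfrac12$, so it is an implicit hypothesis missing from the lemma as stated; you could also note explicitly that this same step requires $r\le 1$ (so that $1-r\ge0$), another implicit restriction of the result (for $r>1$ the second inequality fails already for $A$ a rank-one Fourier projector onto $k=0$). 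Neither caveat affects the application in the paper, where $\aaa$ can be taken as large as one pleases, but they are worth recording.
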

\begin{proof}
First, denoting again $h_{-\aaa}:=-\Delta+\aaa$, 
note that
\[
    \| A \|_{H,\aaa}
    \le
    \| A \|_{-\Delta,\aaa} \|h_{-\aaa}^{1/2}(H+\aaa)^{-1/2}\|^2.
\]
Let $v$ be arbitrary and define $u=(H+\aaa)^{-1/2}v$. Note that
\begin{align*}
    \|h_{-\aaa}^{1/2}u\|^2
    &= \langle u, h_{-\aaa} u \rangle
    = \langle u, (H+\aaa-W)u \rangle
    = \|v\|^2 -  \langle u,W u \rangle.
\end{align*}
Further, there holds
\[
 \langle u,W u \rangle \le \|W\|_{-\Delta,\aaa} \|h_{-\aaa}^{1/2}u\|^2,
\]
and using the inequality $\|W\|_{-\Delta,\aaa} \le \alpha^{-r} \|W\|_{r}$ we obtain that $\|W\|_{-\Delta,\aaa}\le \frac12$ for all $\aaa \ge (2\,\|W\|_r)^{1/r}$, and that $\|h_{-\aaa}^{1/2}(H+\aaa)^{-1/2}\|^2\le 2$ yielding the first inequality. 

The second inequality results from using, once again, the inequality $\|A\|_{-\Delta,\aaa} \le \alpha^{-r} \|A\|_{r}$.
\end{proof}

Before starting the proof of Theorem~\ref{thm:main}, we analyze the relation of the spectra of $\calH$ and $\HMl$. 
\begin{lem}
\label{lem:HN-gap-bs} 
Let $\lax_i<\lax_j$ be the $i$-th resp. $j$-the eigenvalue of $\calH$ and let $\nu_{Mi}(\lax)$ denote the $i$-th eigenvalue of the operator $\HMl$. 
Then, we have the following $M$-independent lower bound 
\begin{align} 
    \label{gap-bnd-bs}
    \lax_j-\lax_i\le \min(\nu_{Mj}(\lax_i)-\nu_{Mi}(\lax_i),\nu_{Mj}(\lax_j)-\nu_{Mi}(\lax_j)).
\end{align}
\end{lem}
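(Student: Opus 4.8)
\textbf{Proof plan for Lemma~\ref{lem:HN-gap-bs}.}

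The key observation is the isospectrality recalled in Corollary~\ref{cor:isospFN}: for $\lax = \lax_i < \ka_M$, the nonlinear eigenvalue problem in $\XM$ reproduces the spectrum of $\calH$ \emph{at that value of the parameter}. More precisely, by Corollary~\ref{cor:isospFN}(a)--(b), since $\lax_i$ is an eigenvalue of $\calH$, the operator $\HMl[\lax_i] - \lax_i$ has a nontrivial kernel, so $\lax_i$ is one of the eigenvalues $\nu_{Mk}(\lax_i)$ of $\HMl[\lax_i]$; likewise, because $\lax_j$ is also an eigenvalue of $\calH$, $\lax_j$ appears among the $\nu_{Mk}(\lax_i)$ only if we evaluate at $\lax = \lax_j$, not at $\lax = \lax_i$. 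So one must be slightly careful: the statement to prove does not follow from a single evaluation of the fixed-point functions, but from comparing eigenvalue \emph{counting functions} of $\HMl$ at the two fixed parameters $\lax_i$ and $\lax_j$.

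The plan is therefore as follows. Fix the parameter at $\lax = \lax_i$. By Corollary~\ref{cor:isospFN}(b), $\dim\Null(\calH - z) = \dim\Null(\HMl[\lax_i] - z)$ for every $z$ with $\re z < \ka_M$; equivalently, the operators $\calH$ and $\HMl[\lax_i]$ have exactly the same eigenvalues with the same multiplicities in $(-\infty,\ka_M)$. Labelling eigenvalues in increasing order (counting multiplicities), this gives $\lax_i = \nu_{Mi}(\lax_i)$ and $\lax_j = \nu_{Mj}(\lax_i)$, hence $\lax_j - \lax_i = \nu_{Mj}(\lax_i) - \nu_{Mi}(\lax_i)$. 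Repeating the same argument with the parameter fixed at $\lax = \lax_j$ gives $\lax_j - \lax_i = \nu_{Mj}(\lax_j) - \nu_{Mi}(\lax_j)$. Thus the left-hand side equals \emph{both} quantities inside the minimum, which in particular proves the claimed inequality (indeed an equality). One should note that this requires $\lax_i, \lax_j < \ka_M$, which is where the hypothesis $M$ large enough (implicitly, $M \ge M_0$) and Lemma~\ref{lem:Hperp-low-bnd} enter; since $\ka_M \to \infty$ as $M \to \infty$, for $M$ large both eigenvalues of interest lie below $\ka_M$, and the bound is then genuinely $M$-independent because its value is pinned to $\lax_j - \lax_i$.

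The main subtlety — not really an obstacle but the point requiring care — is the ordering/multiplicity bookkeeping: one must invoke Corollary~\ref{cor:isospFN}(b) for \emph{all} $z$ below $\ka_M$ (not just at $\lax_i$ itself) to conclude that the ordered lists of eigenvalues of $\calH$ and of $\HMl[\lax_i]$ agree index by index up to index $j$, so that the $i$-th and $j$-th eigenvalue labels match on both sides. Once this matching of labels is in place, the conclusion is immediate and, as noted, sharper than stated: both terms in the minimum equal $\lax_j - \lax_i$. The inequality form is presumably stated because only one direction is needed downstream.
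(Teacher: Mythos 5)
Your plan contains a genuine error, and it is visible in the fact that you derive a strict equality where the paper (correctly) claims only an inequality. The problematic step is the assertion ``by Corollary~\ref{cor:isospFN}(b), $\dim\Null(\calH - z) = \dim\Null(\HMl - z)$ for every $z$ with $\re z < \ka_M$'' with the parameter in $\HMl$ frozen at $\lax_i$. That is not what Corollary~\ref{cor:isospFN} says. In that corollary --- and in the underlying Theorem~\ref{thm:isospF} --- the parameter that appears inside $\HMl$ and the spectral shift on both sides of the identity are one and the same $\lax$; the isospectrality statement is $\dim\Null(\calH - z) = \dim\Null(\mathcal{H}_M(z) - z)$, and one is \emph{not} free to decouple the parameter from the spectral variable. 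Consequently, you cannot conclude that $\lax_j$ is an eigenvalue of $\HM(\lax_i)$: in general $\nu_{Mj}(\lax_i) \neq \lax_j$, and the best you can say (using that $\nu_{Mj}$ is decreasing and $\nu_{Mj}(\lax_j) = \lax_j$) is $\nu_{Mj}(\lax_i) \geq \lax_j$. Interestingly, your own first paragraph correctly flags this trap (``$\lax_j$ appears among the $\nu_{Mk}(\lax_i)$ only if we evaluate at $\lax = \lax_j$, not at $\lax = \lax_i$''), but then the second paragraph contradicts that observation.

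The repair is essentially the paper's argument, and it is shorter than what you wrote. You are entitled only to the two fixed-point identities $\lax_i = \nu_{Mi}(\lax_i)$ and $\lax_j = \nu_{Mj}(\lax_j)$ (each at its own parameter value), so the cross term $\lax_j - \lax_i = \nu_{Mj}(\lax_j) - \nu_{Mi}(\lax_i)$ mixes the two. The missing ingredient is \emph{monotonicity}: by Proposition~\ref{prop:UN-prop}(ii), $\UM(\lax)$ and hence $\HMl$ is monotonically decreasing in $\lax$, so by the min--max principle each branch $\nu_{Mk}(\lax)$ is nonincreasing. Since $\lax_i < \lax_j$, this gives $\nu_{Mj}(\lax_j) \leq \nu_{Mj}(\lax_i)$ and $\nu_{Mi}(\lax_j) \leq \nu_{Mi}(\lax_i)$. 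Plugging each of these into $\lax_j - \lax_i = \nu_{Mj}(\lax_j) - \nu_{Mi}(\lax_i)$ yields $\lax_j - \lax_i \leq \nu_{Mj}(\lax_i) - \nu_{Mi}(\lax_i)$ and $\lax_j - \lax_i \leq \nu_{Mj}(\lax_j) - \nu_{Mi}(\lax_j)$, which together give \eqref{gap-bnd-bs}. So the lemma really is an inequality, not the equality your plan claims.
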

\begin{proof} 
Since  $\lax_i=\nu_{Mi}(\lax_i)$ and $\lax_j=\nu_{Mj}(\lax_j)$, we have $\lax_j-\lax_i=\nu_{Mj}(\lax_j)-\nu_{Mi}(\lax_i)$. 
By definition \eqref{HNlam-def} and Proposition \ref{prop:UN-prop}, the family  $\HMl$ is  monotonically decreasing with $\lam$. Hence $\nu_{Mi}(\lax_j)\le  \nuN^i(\lax_i)$ and $\nu_{Mj}(\lax_j)\le \nu_{Mj}(\lax_i)$, so that  $\lax_j-\lax_i=\nu_{Mj}(\lax_j)-\nu_{Mi}(\lax_i)$ implies \eqref{gap-bnd-bs}.
\end{proof}
\begin{cor}
\label{cor:HN-gap} 
Let $\lax_i$ be an isolated eigenvalue of $H$. Then, the gap $\g_{Mi}$ of $\lax_i$ to the rest of the spectrum of $\HM(\lax_i)$ is bounded below by the gap $\g_0$ of $\lax_i$ to the rest of the spectrum of $H$.
\end{cor}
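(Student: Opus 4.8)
\textbf{Plan of proof for Corollary~\ref{cor:HN-gap}.}
The plan is to deduce the statement directly from Lemma~\ref{lem:HN-gap-bs}, which already contains all the needed monotonicity information; the corollary is essentially a reformulation of that lemma in terms of the gap of a single isolated eigenvalue. First I would fix notation: let $\lax_i$ be the isolated eigenvalue of $\calH$ in question, with $\g_0$ its gap to the rest of $\sigma(\calH)$, and recall from Proposition~\ref{prop:nu-fp-sol} (or rather its analogue for $\nu_{Mi}$) that $\lax_i=\nu_{Mi}(\lax_i)$, i.e. the $i$-th eigenvalue of $\HMl[\lax_i]$ equals $\lax_i$. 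The goal is to show that every other eigenvalue $\nu_{Mj}(\lax_i)$ of $\HM(\lax_i)$ with $j\neq i$ lies at distance at least $\g_0$ from $\lax_i$.

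Next I would split into the two cases $\lax_j>\lax_i$ and $\lax_j<\lax_i$, where $\lax_j$ runs over the eigenvalues of $\calH$ adjacent to $\lax_i$. For an eigenvalue $\lax_j>\lax_i$ of $\calH$, Lemma~\ref{lem:HN-gap-bs} gives $\lax_j-\lax_i\le \nu_{Mj}(\lax_i)-\nu_{Mi}(\lax_i)=\nu_{Mj}(\lax_i)-\lax_i$, so $\nu_{Mj}(\lax_i)\ge \lax_j\ge \lax_i+\g_0$. For an eigenvalue $\lax_j<\lax_i$ of $\calH$, the symmetric version of Lemma~\ref{lem:HN-gap-bs} (interchanging the roles of $i$ and $j$, which is legitimate since the lemma is stated for arbitrary ordered pairs) gives $\lax_i-\lax_j\le \nu_{Mi}(\lax_i)-\nu_{Mj}(\lax_i)=\lax_i-\nu_{Mj}(\lax_i)$, hence $\nu_{Mj}(\lax_i)\le \lax_j\le \lax_i-\g_0$. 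In both cases $|\nu_{Mj}(\lax_i)-\lax_i|\ge \g_0$, and since by Corollary~\ref{cor:isospFN}(b) the eigenvalues of $\HM(\lax_i)$ other than those equal to $\lax_i$ are in bijection (with multiplicity) with the eigenvalues of $\calH$ other than $\lax_i$, taking the infimum over $j$ yields $\g_{Mi}\ge \g_0$.

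The only genuine subtlety — and the step I would be most careful about — is bookkeeping of multiplicities and of the matching of eigenvalue indices between $\calH$ and $\HM(\lax_i)$: one must invoke the isospectrality statement of Corollary~\ref{cor:isospFN} to guarantee that the ``rest of the spectrum of $\HM(\lax_i)$'' corresponds exactly to the ``rest of the spectrum of $\calH$'' and that the nearest other eigenvalue is realized by one of the $\lax_j$ adjacent to $\lax_i$; once this correspondence is in place, the monotonicity inequalities of Lemma~\ref{lem:HN-gap-bs} close the argument immediately. I do not expect any real obstacle here — the work was already done in Lemma~\ref{lem:HN-gap-bs} — so the proof should be only a few lines.
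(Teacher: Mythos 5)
Your proof is correct and follows the route the paper implicitly intends: Corollary~\ref{cor:HN-gap} is stated without proof immediately after Lemma~\ref{lem:HN-gap-bs}, and your derivation — applying the lemma once for eigenvalues $\lax_j>\lax_i$ and once with the roles of $i$ and $j$ interchanged for $\lax_j<\lax_i$, then using $\nu_{Mi}(\lax_i)=\lax_i$ — is exactly that. One small imprecision: Corollary~\ref{cor:isospFN}(b) does not give a bijection between the spectra of $\HM(\lax_i)$ and $\calH$ (the former is finite, the latter infinite), but none is needed, since your two cases already give $|\nu_{Mj}(\lax_i)-\lax_i|\ge|\lax_j-\lax_i|\ge\g_0$ whenever $\lax_j\ne\lax_i$, while for $\lax_j=\lax_i$ the monotonicity of $\nu_{Mj}$ together with $\nu_{Mj}(\lax_j)=\lax_j$ forces $\nu_{Mj}(\lax_i)=\lax_i$, so those indices do not contribute to the gap.
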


\begin{lem}
\label{lem:PertEstim}
Let be $\aaa>0$ such that 
\begin{equation}
    \label{eq:Acond}
    \HM(\las)+\aaa>0,
    \qquad\mbox{and}\qquad
    \aaa>(2\,\|\UM(\las)-\Us(\laxsi)\|_r)^{1/r},
\end{equation}
$M$ large enough such that $\las,  \laxsi < \min(\ka_M, \frac12\laM)$, $\laM \ge 1$ and $4\laM^{-r}\Enorm{V}\le \frac13$, and $N \ge M$.

Then, there holds
\begin{equation}
    \label{eq:PertEstim}
    \|\UM(\las)-\Us(\laxsi) \|_{\HM(\las),\aaa}
    \le 
    |\laxsi-\las| 
    \frac{8\, \alpha^{-r}\laM^{-r}\Enorm{V}^2}{\pi (\ka_M - 2 \min(\las,\laxsi))} 
    +
    36 \, \frac{\|V\|_r^2}{\alpha^{r}}\,\varepsilon(\sigma,r,V)
\end{equation}
with, using the notation $\sigma=(N,M,K)$,
\[
    \varepsilon(\sigma,r,V)
    :=
	  \laN^{-r} + \laM^{-r}  
	 \left[ 4  \laM^{-r}\Enorm{V} \right]^{K+1}.
\]
\end{lem}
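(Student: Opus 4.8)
The plan is to split the quantity $\UM(\las)-\Us(\laxsi)$ into two pieces, one measuring the discrepancy in the spectral parameter and one measuring the truncation/discretization error, and then convert the $r$-norm estimates established in Section~\ref{sec:prelim-res} into $\HM(\las),\aaa$-norm estimates via Lemma~\ref{lem:HW}. Concretely, first I would write
\[
    \UM(\las)-\Us(\laxsi)
    = \big(\UM(\las)-\UM(\laxsi)\big) + \big(\UM(\laxsi)-\Us(\laxsi)\big),
\]
so that the first term is controlled by Lemma~\ref{lem:UNdist} (the Lipschitz-in-$\lax$ bound on $\UM$) and the second by Lemma~\ref{lem:UN-UNK-bnd} (the truncation error bound). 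The hypotheses $\las,\laxsi<\min(\ka_M,\tfrac12\laM)$, $\laM\ge 1$ and $4\laM^{-r}\Enorm{V}\le\tfrac13$ are exactly what is needed to invoke both lemmas; in particular $4\laM^{-r}\Enorm V\le\tfrac13$ makes all the denominators like $1-4\laM^{-r}\Enorm V$, $1-4\laN^{-r}\Enorm V-\tfrac{16\laM^{-2r}\Enorm V^2}{1-4\laM^{-r}\Enorm V}$ bounded below by an absolute constant (one checks $\ge \tfrac23$ and $\ge\tfrac13$ or so), which is how the messy rational expressions in~\eqref{UM-Us-bnd} collapse into the clean constant $36$ multiplying $\varepsilon(\sigma,r,V)$. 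Likewise $[1+4\laM^{-r}\Enorm V/(1-4\laM^{-r}\Enorm V)]^2$ is bounded by an absolute constant under the same hypothesis.

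Next I would pass from $\|\cdot\|_r$ to $\|\cdot\|_{\HM(\las),\aaa}$. Here the key observation is that $\HM(\las) = \PM(-\Delta+V)\PM + \UM(\las)$ restricted to $\XM$ is of the form $-\Delta + W$ (on the finite-dimensional space $\XM$, with $-\Delta$ replaced by $\PM(-\Delta)\PM$ and $W = \PM V\PM + \UM(\las)$, both bounded operators on $\XM$), so Lemma~\ref{lem:HW} applies provided $\aaa\ge(2\|W\|_r)^{1/r}$ and $\HM(\las)+\aaa>0$. The condition $\HM(\las)+\aaa>0$ is assumed in~\eqref{eq:Acond}; the condition $\aaa\ge(2\|W\|_r)^{1/r}$ needs a word — one bounds $\|W\|_r\le \|V\|_r+\|\UM(\las)\|_r$, and $\|\UM(\las)\|_r\lesssim \laM^{-r}\Enorm V^2$ by~\eqref{UM-bnd}, so $W$ is controlled by $\Enorm V$ alone; this should be absorbable into the hypotheses for $M_0$ large (and is the kind of bookkeeping that the statement defers to its proof). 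Actually, re-reading the statement, the cleaner route is: apply Lemma~\ref{lem:HW} to get $\|A\|_{\HM(\las),\aaa}\le 2\|A\|_{-\Delta,\aaa}\le 2\alpha^{-r}\|A\|_r$ directly, which is legitimate since $\HM(\las)=-\Delta+W$ in the above sense and the second hypothesis in~\eqref{eq:Acond} guarantees $\aaa$ dominates $\|\UM(\las)-\Us(\laxsi)\|_r$ — though for the full application of Lemma~\ref{lem:HW} one genuinely wants $\aaa\gtrsim\|W\|_r^{1/r}$ with $W$ the full potential; I would state this carefully, noting the second condition in~\eqref{eq:Acond} is the relevant one because that is precisely the operator being measured. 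Then
\[
    \|\UM(\las)-\Us(\laxsi)\|_{\HM(\las),\aaa}
    \le 2\alpha^{-r}\,\|\UM(\las)-\Us(\laxsi)\|_r
    \le 2\alpha^{-r}\Big(\|\UM(\las)-\UM(\laxsi)\|_r + \|\UM(\laxsi)-\Us(\laxsi)\|_r\Big).
\]

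Finally I would assemble the two $r$-norm bounds. For the first term, Lemma~\ref{lem:UNdist} (with $\lax=\min(\las,\laxsi)$, $\mu=\max(\las,\laxsi)$) gives
\[
    \|\UM(\las)-\UM(\laxsi)\|_r \le |\laxsi-\las|\,\frac{\laM^{-r}}{\pi(\ka_M-2\min(\las,\laxsi))}\cdot\frac{4\Enorm V^2}{1-4\laM^{-r}\Enorm V},
\]
and using $1-4\laM^{-r}\Enorm V\ge\tfrac23$ the factor $\tfrac{4\Enorm V^2}{1-4\laM^{-r}\Enorm V}\le 6\Enorm V^2$; after multiplying by $2\alpha^{-r}$ this produces the term $|\laxsi-\las|\,\tfrac{8\alpha^{-r}\laM^{-r}\Enorm V^2}{\pi(\ka_M-\min(\las,\laxsi))}$ in~\eqref{eq:PertEstim} (matching $\ka_M-2\min\ge\ka_M-\min$ when $\min\ge 0$, which holds for $M$ large so that $\min(\las,\laxsi)$ is bounded, or more carefully one keeps $\ka_M-2\min$ and notes it dominates a constant multiple of $\ka_M-\min$). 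For the second term, Lemma~\ref{lem:UN-UNK-bnd} gives a bound of the form $C_1\laN^{-r}\Enorm V^2 + C_2\laM^{-r}\Enorm V^2[4\laM^{-r}\Enorm V]^{K+1}$ with $C_1,C_2$ absolute constants under $4\laM^{-r}\Enorm V\le\tfrac13$; bounding $C_1,C_2$ explicitly (the worst-case arithmetic with denominators $\ge\tfrac13$ and the square bracket $\le 4$ gives something like $\le 18\|V\|_r^2\,\varepsilon(\sigma,r,V)$ before the factor $2\alpha^{-r}$), one lands on $36\,\tfrac{\|V\|_r^2}{\alpha^r}\varepsilon(\sigma,r,V)$. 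The main obstacle I anticipate is the second one: verifying that $\HM(\las)$ really is of the form $-\Delta+W$ with $W$ satisfying the smallness/positivity hypotheses of Lemma~\ref{lem:HW} — i.e., that the effective interaction $\UM(\las)$ does not spoil the "$\aaa\ge(2\|W\|_r)^{1/r}$" requirement — and keeping the constant-chasing honest so that it genuinely closes at $36$ rather than some larger number; the $\lax$-dependence bookkeeping ($\min$ vs. $2\min$ in the denominator, sign of $\min(\las,\laxsi)$) is a secondary but real nuisance that I would handle by choosing $M_0$ large enough that $\min(\las,\laxsi)$ is comfortably below $\tfrac12\laM$ with room to spare.
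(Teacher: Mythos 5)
Your proposal is correct and follows essentially the same route as the paper's proof: apply Lemma~\ref{lem:HW} to pass from the $\|\cdot\|_{\HM(\las),\aaa}$ norm to $2\alpha^{-r}\|\cdot\|_r$, split $\UM(\las)-\Us(\laxsi)$ by the triangle inequality into $\UM(\las)-\UM(\laxsi)$ and $\UM(\laxsi)-\Us(\laxsi)$, and control these by Lemma~\ref{lem:UNdist} and Lemma~\ref{lem:UN-UNK-bnd} respectively, with the hypothesis $4\laM^{-r}\Enorm{V}\le\tfrac13$ absorbing the rational prefactors into absolute constants. The subtlety you flag about the hypothesis $\aaa\ge(2\|W\|_r)^{1/r}$ in Lemma~\ref{lem:HW} (full potential of $\HM(\las)$ versus the operator being measured) is a looseness present in the paper's own statement and proof as well, so it does not distinguish your argument from theirs.
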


\begin{proof}
Applying Lemma~\ref{lem:HW} to the present case, we obtain
\begin{align*}
    \|\UM(\las)-\Us(\laxsi)\|_{\HM(\las),\aaa}
    &\le 
    2 \alpha^{-r} \|\UM(\las)-\Us(\laxsi)\|_{r}.
\end{align*}
Employing the triangle inequality, there holds
\begin{equation}
    \label{eq:ProofThm1TriangleIneq}
    \|\UM(\las)-\Us(\laxsi)\|_{r}
    \le 
    \|\UM(\las)-\UM(\laxsi)\|_{r}
    +
    \|\UM(\laxsi)-\Us(\laxsi)\|_{r}.
\end{equation}
For the first term of the right handside, Lemma~\ref{lem:UNdist} yields
\begin{align*}
    \|\UM(\laxsi) - \UM(\las)\|_r
    &\le 
    |\laxsi-\las| 
    \frac{8\laM^{-r}\Enorm{V}^2}{\pi (\ka_M - 2\min(\las,\laxsi))}   .
\end{align*}
Finally, we use Lemma~\ref{lem:UN-UNK-bnd} in order to bound the second term of~\eqref{eq:ProofThm1TriangleIneq}, we conclude that
\[
    \|\UM(\laxsi)-\Us(\laxsi)\|_{r}
    \le
 \,\Enorm{V}^2  \left[ 
	{18} \laN^{-r} + 6 \laM^{-r}  
	 \left[4  \laM^{-r}\Enorm{V} \right]^{K+1}
	\right].
\]
Combining all estimates concludes the proof.
\end{proof}

Finally, we are now ready to prove Theorem~\ref{thm:main}.

\begin{proof}[Proof of Theorem~\ref{thm:main}]

Note that $\las=\nuMi(\las)$, $\laxsi = \nusi(\laxsi)$ where $\nuMi(\lax)$ and $\nusi(\lax)$ represents the $i$-th eigenvalue of $\HM(\lax)$ and $\Hsi(\lam)$ respectively.

We choose $\aaa>0$ in order to satisfy the condition~\eqref{eq:Acond} of Lemma~\ref{lem:PertEstim}.
By the expression~\eqref{eq:PertEstim} of Lemma~\ref{lem:PertEstim} we can choose $\widetilde M_{0}\in \mathbb N $ such that
\begin{equation*}
    \|\UM(\las)-\Us(\laxsi) \|_{\HM(\las),\aaa}
    \le 
    \frac14 \frac{\g_0}{\lac},
\end{equation*}
for any $M\ge \widetilde M_{0}$, and where $\g_0$ denotes the gap of $\las$ to the rest of the spectrum of $\Hcal$,  which is a lower bound of $\g_{Mi}$ by Corollary~\ref{cor:HN-gap}, and $\lac = \las+\g_0+\aaa$. 
In consequence, we can apply Theorem~\ref{thm:FS-pert-var} with 
\begin{align}
    \label{eq:ApplThm3}
    H_0&=\HM(\las), \qquad
    W=\Us(\laxsi)-\UM(\las),  \qquad
    \lax_0 = \las,
\end{align}
and thus $H=\Hsi(\laxsi)$, yielding
\begin{align}
    \label{eq:EvalEstimPr1-1}
    |\las - \laxsi |
    &=
    |\nuMi(\las) - \nusi(\laxsi)|
    \le 
    (\las + \aaa) \|\UM(\las)-\Us(\laxsi) \|_{\HM(\las),\aaa}
    \\&\le 
    \label{eq:EvalEstimPr1-2}
    (\las + \aaa)
    \left[
        |\laxsi-\las| 
    \frac{16 \,\alpha^{-r}\laM^{-r}\Enorm{V}^2}{\pi (\ka_M - 2\min(\las,\laxsi))} 
    +
    { 54}\,\frac{\|V\|_r^2}{\alpha^{r}}\,\varepsilon(\sigma,r,V)
    \right]
\end{align}
It exists a $M_0\in\mathbb N$, with $M_0\ge \widetilde M_{0}$, such that
\begin{align}
    \label{as:boundby12}
      4\laM^{-r}\Enorm{V}\le \frac13
    \qquad\mbox{and}\qquad
    (\las + \aaa)\frac{16\, \alpha^{-r}\laM^{-r}\Enorm{V}^2}{\pi (\ka_M - 2\min(\las,\laxsi))} 
    \le \frac12,
\end{align}
for all $M\ge M_0$
and thus
\begin{equation}
    \label{eq:EVestimPr}
    |\las - \laxsi | 
    \ls (\las + \aaa)
    \frac{\|V\|_r^2}{\alpha^{r}} \, \varepsilon(\sigma,r,V).
\end{equation}
A similar development for the eigenfunctions based on the estimates~\eqref{eq:est-eigenvectors1}--\eqref{eq:est-eigenvectors2} of Theorem~\ref{thm:FS-pert-var} can be applied.
Indeed, we denote by $\varphi_{\sigma i}$ the $i$-th normalized eigenfunction of $\Hsi(\laxsi)$, thus, in terms of the notation used in Theorem~\ref{thm:FS-pert-var}, we have $\psi_i=\varphi_{\sigma i}$.
Then, the corresponding eigenfunction in the span of all eigenfunctions corresponding to $\las$ is given by $\varphi_{Mi} = {\sf P}_0 \varphi_{\sigma i}$, where ${\sf P}_0$ is the projector onto this span of all eigenfunctions of $\HM(\las)$ corresponding to $\las$.
Thus, $\varphi_{Mi}$ is an eigenfunction of $\HM(\las)$ associated to the eigenvalue $\las$.
Taking, again, Corollary~\ref{cor:HN-gap} into account yields
\begin{align}
   \| \varphi_{Mi} - \varphi_{\sigma i} \|
    & \le 4 \frac{\lac}{\g_0} \| \UM(\las)-\Us(\laxsi) \|_{\!\HM(\las),\aaa}, \label{eq:est-eigenvectors2-2}
\end{align}
Using the bounds of $\|\UM(\las)-\Us(\laxsi) \|_{\!\HM(\las),\aaa}$ expression~\eqref{eq:PertEstim} of Lemma~\ref{lem:PertEstim} and combining with~\eqref{eq:EVestimPr} yields the auxiliary result (see~\eqref{eq:est-eigenvectors1-0}): for $s\ge 0$ and for all $M\ge M_0$, 
\begin{align}
    \| (-\Delta+\id)^s (\varphi_{Mi} - \varphi_{\sigma i}) \|
    \ls 
    \rho_M^s   \| \varphi_{Mi} - \varphi_{\sigma i} \|
    & \ls \frac{\lac}{\g_0}\frac{\|V\|_r^2}{\alpha^{r}}\rho_M^s\varepsilon(\sigma,r,V) .
	\label{eq:est-eigenvectors2-3}
\end{align}

\medskip

Now we define the eigenfunction $\varphi_i := \QM(\las) \varphi_{Mi}$ where $\QM$ is defined by~\eqref{eq:QM}. 
Following Corollary~\ref{cor:isospFN}, $\varphi_i$ is an eigenfunction of $\Hcal$ associated to the eigenvalue $\las$.
Note that
\begin{align*}
    \varphi_i - Q_\sigma(\laxsi) \varphi_{\sigma i}
    = 
    \varphi_{Mi} - \varphi_{\sigma i}
    - \big[ S_M(\las) \varphi_{Mi} - S_\sigma(\laxsi)\varphi_{\sigma i} \big],
\end{align*}
with $S_M(\lax):=\PMp(\HMp-\lax)^{-1}\PMp V \PM$, $S_\sigma(\lax):=R_\sigma(\lax) \PNM V \PM$. 
Applying the triangle inequality several times yields
\begin{equation}
    \label{eq:Qansatz}
    \|\varphi_i - Q_\sigma(\laxsi) \varphi_{\sigma i}\|
    \le 
    (1+I_1)\|\varphi_{Mi} - \varphi_{\sigma i}\|
    + I_2 + I_3,
\end{equation}
with
\begin{align*}
    I_1 &=
    \| S_M(\las) \| ,
    \qquad
    I_2 =
    \| S_M(\las) - S_M(\laxsi) \|,
    \qquad 
    I_3 = 
    \| S_M(\laxsi) - S_\sigma(\laxsi) \|.
\end{align*}
For $I_1$, proceeding as in the proof of Lemma~\ref{lem:UNK-bnd}, 
writing 
\[
    	S_M(\lax)
	= \sum_{k=0}^\infty 
	 \PMp h_{\lax}^{-1/2} \Big[ - h_{\lax}^{-1/2} V_M^\perp h_{\lax}^{-1/2} \Big]^k h_{\lax}^{-1/2}  \PMp V \PM.
\]
and under the assumptions $\lax\le \min(\frac12\laM, \ka_M)$, $\rho_M\ge 2$, $4\laM^{-r}\Enorm{V}\le \frac12$, using the estimates of Lemma~\ref{lem:Vperp-bnd} $\| \PMp h_{\lax}^{-1/2} \|\le \sqrt{2} \rho_M^{-1/2}$, $\| \PM h_{-1}^s \|\le (\rho_M+1)^s \le (3\rho_M/2)^s$ for $s\ge 0$, one obtains
\begin{align}
    \label{eq:SMestim1}
    \| S_M(\lax) \| 
    & \le 
    2 \, \| \PMp h_{\lax}^{-1/2} \| \, 
    \| \PMp h_{\lax}^{-1/2} \PMp V \PM h_{-1}^{-1/2+r/2} \|
    \, \| \PM h_{-1}^{1/2-r/2} \|
    \\
    \label{eq:SMestim2}
    &\le 
    2 \sqrt{2} (3/2)^{1/2-r/2} \rho_M^{-1/2}  \rho_M^{-r/2} \|V\|_r \rho_M^{1/2-r/2}
    \le 12 \rho_M^{-r} \|V\|_r.
\end{align}
For $I_{2}$, we proceed as in Lemma~\ref{lem:UNdist} based on the results of Lemma~\ref{lem:UN-bnd} but substituting $U_M(\lax)$ by $S_M(\lax)$ in order to obtain
\begin{equation}
    \label{eq:I2}
     I_{2}
     \le 
     \frac{|\las - \laxsi|}{\pi (\kappa_M - 2\min(\las,\laxsi))} 12 \rho_M^{-r} \|V\|_r,
\end{equation}
based upon the estimate $\|S_M(\lax)\| \le 12 \rho_M^{-r} \|V\|_r$ from~\eqref{eq:SMestim1}--\eqref{eq:SMestim2}.

Finally for $I_3$, we proceed as in the proof of Lemma~\ref{lem:UN-UNK-bnd}. 
Indeed, we apply, once again, the triangle inequality to obtain $I_3\le I_{3,1} + I_{3,2}$ with
\begin{align*}
    I_{3,1} = \| S_M(\laxsi) - S_{MN}(\laxsi) \|
    \qquad
    I_{3,2} = \| S_{MN}(\laxsi) - S_{\sigma}(\laxsi) \|.
\end{align*}
with $S_{MN}(\lax):=\PNM ({\HMN}-\lax)^{-1} \PNM V \PM$.
  To estimate $I_{3,1}$, we first note that
\begin{align*}
    \nonumber
    \|S_M(\lax) - S_{MN}(\lax)\|
    \le
    &
    \| h_{-1}^{-1/2+r/2} \PNM \| 
    \| h_{-1}^{1/2-r/2} \left[ \PMp (\HMp- \lax)^{-1} \PMp  - 
    \PNM (\HMN- \lax)^{-1} \PNM \right] h_{-1}^{1/2-r/2} \| \\
    & \| h_{-1}^{-1/2+r/2}  V \PM \|  
\end{align*}
Using that $\| h_{-1}^{-1/2+r/2} \PNM \| \le \rho_M^{-1/2+r/2}$ and $\| h_{-1}^{-1/2+r/2}  V \PM \| \le \Enorm{V} \rho_M^{1/2-r/2}$, we obtain 
\[
    \|S_M(\lax) - S_{MN}(\lax)\|
    \le 
    \Enorm{V}
    \| h_{-1}^{1/2-r/2} \left[ \PMp (\HMp- \lax)^{-1} \PMp  - 
    \PNM (\HMN- \lax)^{-1} \PNM \right] h_{-1}^{1/2-r/2} \|.
\]
Adapting the proof of~\eqref{eq:UM-UMN}, and noting that $4  \laN^{-r}\Enorm{V} - \frac{16  \laM^{-2r}\Enorm{V}^2}{1 - 4 \rho_M^{-r} \Enorm{V}} <1$, and $4 \rho_M^{-r} \Enorm{V}<1$, we obtain
\begin{equation}
    \label{eq:I31}
       I_{3,1} \ls 
    \laN^{-r}  \Enorm{V}.
\end{equation}

Finally, for $I_{3,2}$, we proceed as in the derivation of~\eqref{eq:UMN-Us}, starting from a similar ansatz than~\eqref{eq:AuxLem7-2}
\[
	S_{MN}(\lax)-S_\sigma(\lax)
	= \sum_{k=K+1}^\infty
     \PNM h_{\lax}^{-1/2} \Big[ - h_{\lax}^{-1/2} \VMN h_{\lax}^{-1/2} \Big]^k h_{\lax}^{-1/2}  \PNM V \PM,
\]
 yielding, together with $\|\PNM h_{\lax}^{-1/2}\| \ls \rho_M^{-1/2}$ and 
 $\|h_{\lax}^{-1/2}  \PNM V \PM\| \ls \rho_M^{1/2-r} \Enorm{V}$,
\begin{align}
    \label{eq:I32}
    I_{3,2} = \| S_{MN}(\laxsi) - S_{\sigma}(\laxsi) \|
    \ls 
    \rho_M^{-r} \| V \|_r \left[4  \laM^{-r}\Enorm{V} \right]^{K+1}.
\end{align}
Starting from~\eqref{eq:Qansatz} and combining \eqref{eq:SMestim1}--\eqref{eq:SMestim2}, \eqref{eq:I2}, \eqref{eq:I31}, \eqref{eq:I32} with the estimates~\eqref{eq:est-eigenvectors2-3} (with $s=0$), \eqref{eq:EVestimPr} and the bound~\eqref{as:boundby12} concludes the proof. 
\end{proof}

\begin{remark}
    \label{rem:ConvFP}
     Having now understood how to apply Theorem~\ref{thm:FS-pert-var} in this context, i.e. using~\eqref{eq:ApplThm3}, and seen the abstract theory of Section~\ref{sec:pert-est} we can now make a statement about the convergence of the fixed-point iteration schemes~\eqref{EVPNMKk} and \eqref{EVPNMKk-2}: 
     Following Corollary~\ref{cor:ContractionH}, we note that if $\nusi(\lax)$ parametrizes an isolated branch on some interval $I_i$ containing $\laxsi$, $M\ge M_0$ and, using~\eqref{eq:EvalEstimPr1-1}--\eqref{eq:EvalEstimPr1-2} and~\eqref{eq:EVestimPr}, if $\varepsilon(\sigma,r,V)$ is small enough,
      then, the fixed-point iterations converge for any starting point in $I_i$. 
     Unfortunately, it is difficult to assess whether the isolated branch property holds in practical application and this remains an abstract result.
\end{remark}

\section{Numerical results}
\label{sec:NumRes}
In this section, we test the theoretical estimates developed in this article, {\blue first in a one-dimensional case, and second, in a three-dimensional case for a Coulomb potential}. 

\subsection{One-dimensional case}
We first consider a one-dimensional test case with $\Omega=(0,1)$ and a potential $V_t$ given by its Fourier coefficients:
\[
	\widehat V_0 = -10,\qquad
	\widehat V_n = -\frac{5}{|n|^t},
\]
so that $V_t\in \Hs{t-\tfrac{1}{2}-\varepsilon}$for any $\varepsilon>0$. 
We then consider the two different values $t=1$ or $t=0$, see Figure~\ref{fig:potential} for a graphical illustration for the case $t=1$, so that including the embedding expressed under the constraints~\eqref{eq:SobEmb} yields that $\Enorm{V}<\infty$ for $r =1$ if $t=1$ and for all $r < 1/2$ if $t=0$.

Note that, to the best of our knowledge, the classical convergence analysis does not cover such low regularities of the potentials. While the standard analysis can probably be extended to potentials in $L^2$ and thus covering the case $t=1$ (although we are not aware of any published analysis in this case), it certainly does not hold without further developments for $t=0$.

In Figure~\ref{fig:convK}, we illustrate the convergence of the discrete solutions with respect to $K$ for different values of $M$ and $t=1$ and $t=0$, and for fixed $N=500$.
The error in the eigenvalue and eigenvector are defined as
\[
    {\sf err}_{\sf val} = |\las-\laxsi|
    \qquad 
    \mbox{and}
    \qquad
    {\sf err}_{\sf vec} = \|\varphi_{i} - Q_\sigma(\laxsi)\varphi_{\sigma i}\|,
\]
where $(\las,\varphi_{M i})$, $(\laxsi,\varphi_{\sigma i})$ are the $i$-th solution to \eqref{EVPN} and \eqref{EVPNMK} respectively, using the computational Strategy 1 defined by~\eqref{EVPNMKk} targeting the smallest eigenvalue. 
The ``exact'' solution is obtained by computing the variational approximation for $N_{\sf e}=1000$.

 We observe that, in agreement with the theory for small enough $K,M$, the convergence rate with respect to $K$ for different values of $M$ is the same for the eigenvalue and eigenvector error and that the convergence rate improves with increased values of $M$.
 In this example, we observe that the condition $M\ge M_0$ is not restrictive and convergence can be observed for all values of $M$.
 It is also noted, in particular for the higher values of $M$ (but still very moderate), that the number on the truncation order $K$ can be kept very low to achieve a good accuracy. 
 This, in turn, means that the number of computations on the fine grid (essentially matrix-vector products involving the fine grid for each matrix-vector product involving the coarse Hamiltonian $\Hsi(\lax)$) can be kept to a minimum.
 
A similar behaviour is reported in Figure~\ref{fig:convK_n3} for the third eigenvalue using the potential $V_{t=1}$ and again $N=500$.

The number of required SCF-iterations~\eqref{EVPNMKk} to converge to an increment in the eigenvalue smaller than $10^{-12}$, thus very tight, is stable and very moderate over all test cases as reported in Tables~\ref{tab:results} and~\ref{tab:results2} (the case of $t=0$ behaves similarly and is not reported here).

Finally, Figure~\ref{fig:convNres} illustrates the error in the eigenvalue and eigenvector with respect to~$N$ for different values of $K$ and $M$ for the first eigenvalue with $V_{t=1}$ and $V_{t=0}$. 
We observe two regimes. First, for small values of $N$, the error is limited by the small size of the fine grid $\XN$, i.e. by a small $N$, and decreases with increasing $N\in [25,500]$. 
Second, when $N$ is large enough, the error due to the moderate values of $M$, $K$ is dominating and the error stagnates.
As $M$ or $K$ increase, the transition of the two regimes moves to lower accuracy. This agrees well with the theoretical result presented in Theorem~\ref{thm:main}.
For the potential $V_{t=1}$, we observe that the convergence rate in $N$ (for $M,K$ large enough) is roughly 3 for the eigenvalue error and 2.5 for the eigenvector error, which are the rates predicted by the standard analysis as outlined in Corollary~\ref{rem:TriangleIneq}.
For the less regular case $V_{t=0}$, and thus $\Enorm{V}<\infty$ for any $r<1/2$, we observe a rate of roughly 1 in both cases, eigenvalue and eigenvector $L^2$-error, which is exactly as predicted by Theorem~\ref{thm:main}.

\begin{figure}[t!]
	\centering
    \includegraphics[trim = 0mm 0mm 0mm 0mm, clip,width=0.6\textwidth]{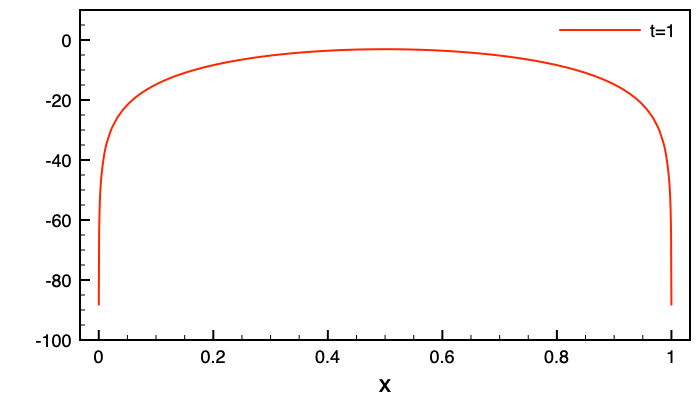}
	\caption{Illustration of the potential $V_{t=1}$.}
	\label{fig:potential}
\end{figure}

\begin{figure}[t!]
	\centering
    \includegraphics[trim = 0mm 0mm 0mm 0mm, clip,width=0.45\textwidth]{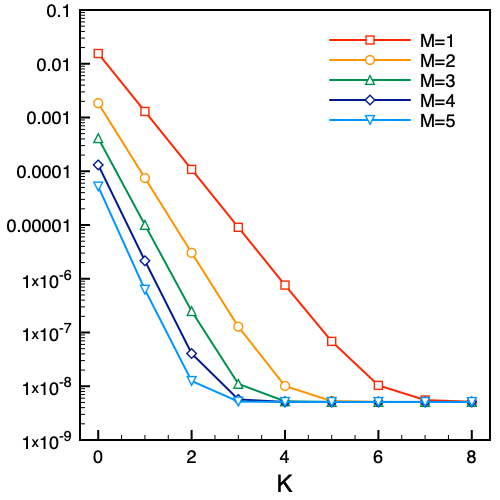}
     \includegraphics[trim = 0mm 0mm 0mm 0mm, clip,width=0.45\textwidth]{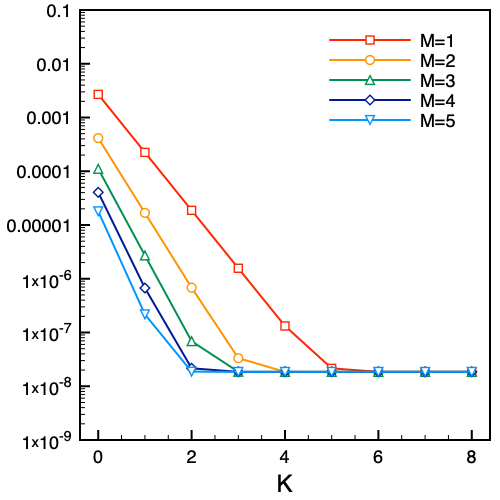}
    \\
    \includegraphics[trim = 0mm 0mm 0mm 0mm, clip,width=0.45\textwidth]{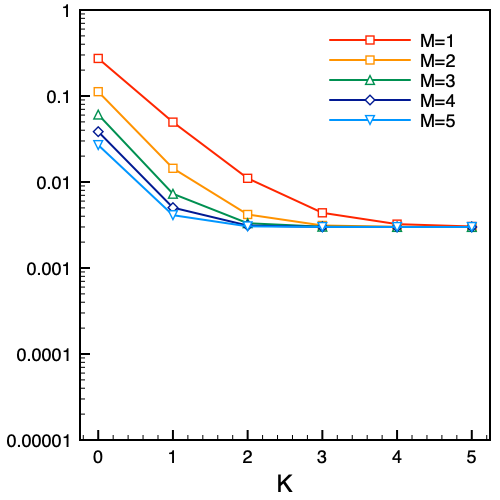}
    \includegraphics[trim = 0mm 0mm 0mm 0mm, clip,width=0.45\textwidth]{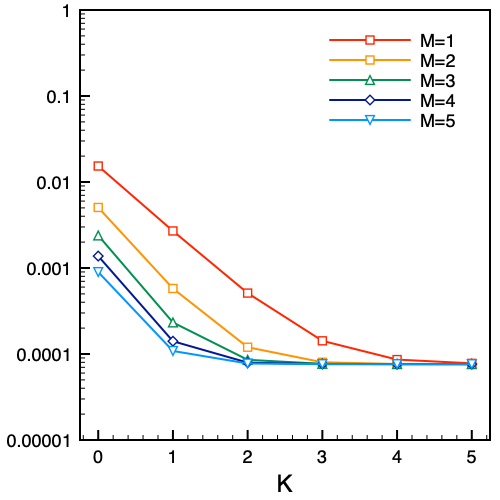}
	\caption{
	The convergence of the eigenvalue error ${\sf err}_{\sf val}$ (left) and the eigenvector error ${\sf err}_{\sf vec}$ (right)  corresponding to the first eigenvalue with respect to $K$ for different values of $M$ and for fixed $N=500$ \new{in the one-dimensional case.}
	The top row corresponds to a potential $V_{t=1}$ whereas the bottom row corresponds to~$V_{t=0}$.}
	\label{fig:convK}
\end{figure}

\begin{table}[t]
	\footnotesize
	\setlength{\tabcolsep}{4pt}
	\centering
		\vspace{5pt}
      \begin{tabular}{cr}
		\hline
		\multicolumn{1}{c}{$M=1$}
		\\
		\multicolumn{1}{c}{$K$} &
		\multicolumn{1}{c}{\# scf}
	 	\\
		\hline
1	&	6	\\
2	&	6	\\
3	&	6	\\
4	&	6	\\
\hline
	\end{tabular}
      \begin{tabular}{cr}
		\hline
		\multicolumn{1}{c}{$M=2$}
		\\
		\multicolumn{1}{c}{$K$} &
		\multicolumn{1}{c}{\# scf}
	 	\\
		\hline
1	&		5	\\
2	&		5	\\
3	&		5	\\
4	&		5	\\
\hline
	\end{tabular}
     \begin{tabular}{cr}
		\hline
		\multicolumn{1}{c}{$M=3$}
		\\
		\multicolumn{1}{c}{$K$} &
		\multicolumn{1}{c}{\# scf}
	 	\\
		\hline
1	&		4	\\
2	&		4	\\
3	&		4	\\
4	&		4	\\
\hline
	\end{tabular}
\begin{tabular}{cr}
		\hline
		\multicolumn{1}{c}{$M=4$}
		\\
		\multicolumn{1}{c}{$K$} &
		\multicolumn{1}{c}{\# scf}
	 	\\
		\hline
1	&		4	\\
2	&		4	\\
3	&		4	\\
4	&		4	\\
\hline
\end{tabular}
\begin{tabular}{cr}
		\hline
		\multicolumn{1}{c}{$M=5$}
		\\
		\multicolumn{1}{c}{$K$} &
		\multicolumn{1}{c}{\# scf}
	 	\\
		\hline
1	&		4	\\
2	&		4	\\
3	&		4	\\
4	&		4	\\
\hline
\end{tabular}
\begin{tabular}{cr}
		\hline
		\multicolumn{1}{c}{$M=6$}
		\\
		\multicolumn{1}{c}{$K$} &
		\multicolumn{1}{c}{\# scf}
	 	\\
		\hline
1	&		4	\\
2	&		4	\\
3	&		4	\\
4	&		4	\\
\hline
\end{tabular}
	\vspace{10pt}
	\caption{
The number of SCF-like iterations of the solution strategy~\eqref{EVPNMKk} to reach an increment of $10^{-12}$ in the eigenvalue for the approximation of the first eigenvalue with $V_{t=1}$ using $N=500$ in the one-dimensional case.
	}
	\label{tab:results}
\end{table}

\begin{table}[t]
	\footnotesize
	\setlength{\tabcolsep}{4pt}
	\centering
		\vspace{5pt}
      \begin{tabular}{cr}
		\hline
		\multicolumn{1}{c}{$M=1$}
		\\
		\multicolumn{1}{c}{$K$} &
		\multicolumn{1}{c}{\# scf}
	 	\\
		\hline
1	&		6	\\
2	&		6	\\
3	&		6	\\
4	&		6	\\
\hline
	\end{tabular}
      \begin{tabular}{cr}
		\hline
		\multicolumn{1}{c}{$M=2$}
		\\
		\multicolumn{1}{c}{$K$} &
		\multicolumn{1}{c}{\# scf}
	 	\\
		\hline
1	&		4	\\
2	&		4	\\
3	&		4	\\
4	&		4	\\
\hline
	\end{tabular}
     \begin{tabular}{cr}
		\hline
		\multicolumn{1}{c}{$M=3$}
		\\
		\multicolumn{1}{c}{$K$} &
		\multicolumn{1}{c}{\# scf}
	 	\\
		\hline
1	&		4	\\
2	&		4	\\
3	&		4	\\
4	&		4	\\
\hline
	\end{tabular}
    \begin{tabular}{cr}
		\hline
		\multicolumn{1}{c}{$M=4$}
		\\
		\multicolumn{1}{c}{$K$} &
		\multicolumn{1}{c}{\# scf}
	 	\\
		\hline
1	&		4	\\
2	&		4	\\
3	&		4	\\
4	&		4	\\
\hline
	\end{tabular}
\begin{tabular}{cr}
		\hline
		\multicolumn{1}{c}{$M=5$}
		\\
		\multicolumn{1}{c}{$K$} &
		\multicolumn{1}{c}{\# scf}
	 	\\
		\hline
1	&		3	\\
2	&		3	\\
3	&		3	\\
4	&		3	\\
\hline
\end{tabular}
\begin{tabular}{cr}
		\hline
		\multicolumn{1}{c}{$M=6$}
		\\
		\multicolumn{1}{c}{$K$} &
		\multicolumn{1}{c}{\# scf}
	 	\\
		\hline
1	&		3	\\
2	&		3	\\
3	&		3	\\
4	&		3	\\
\hline
\end{tabular}
	\vspace{10pt}
	\caption{	
	The number of SCF-like iterations of the solution strategy~\eqref{EVPNMKk} to reach an increment of $10^{-12}$ in the eigenvalue for the approximation of the third eigenvalue with $V_{t=1}$ using $N=500$ in the one-dimensional case.
}
	\label{tab:results2}
\end{table}

\begin{figure}[t!]
	\centering
    \includegraphics[trim = 0mm 0mm 0mm 0mm, clip,width=0.45\textwidth]{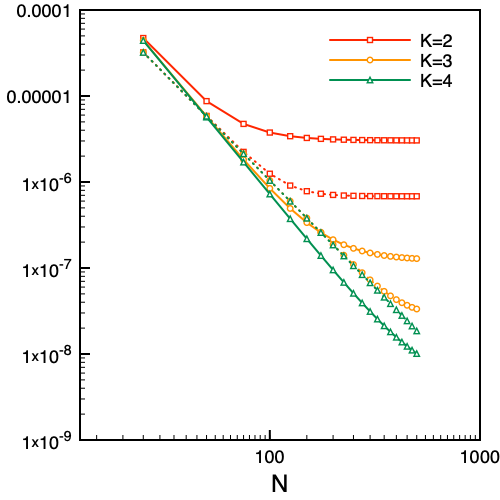}
    \includegraphics[trim = 0mm 0mm 0mm 0mm, clip,width=0.45\textwidth]{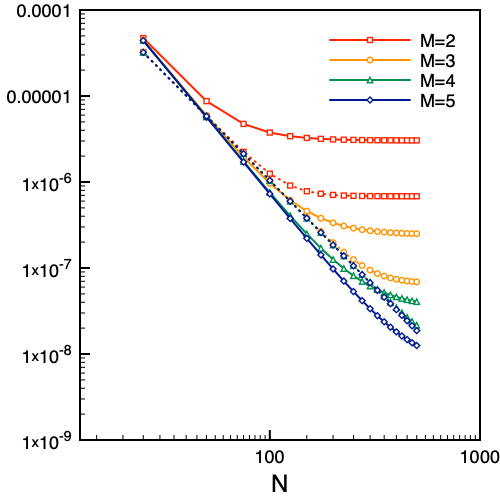}
    \includegraphics[trim = 0mm 0mm 0mm 0mm, clip,width=0.45\textwidth]{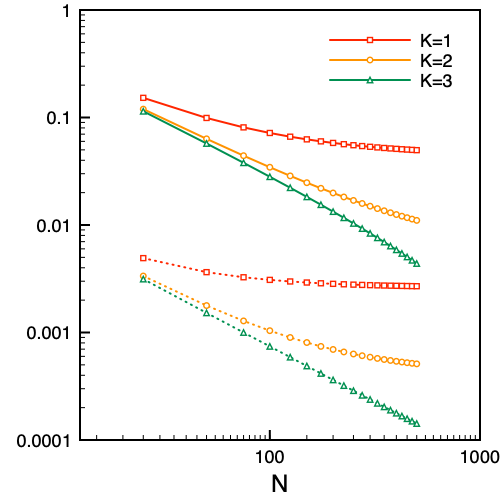}
    \includegraphics[trim = 0mm 0mm 0mm 0mm, clip,width=0.45\textwidth]{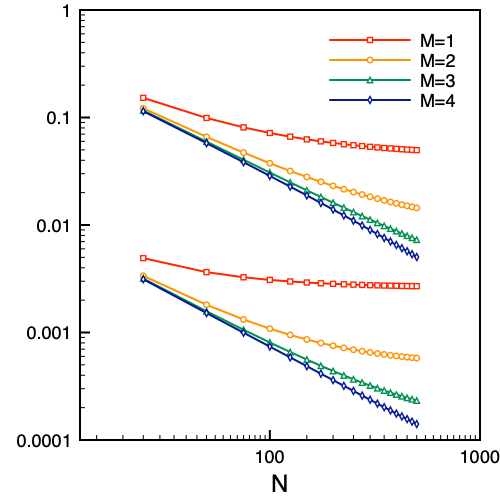}
	\caption{
 	The convergence of the eigenvalue error ${\sf err}_{\sf val}$ (straight lines) and the eigenvector error  ${\sf err}_{\sf vec}$ (dotted lines) corresponding to the first eigenvalue with respect to $N$ for different values of $K$ (left) with $M=2$ (top) and $M=1$ (bottom) and different values of $M$ (right) with $K=2$ (top) and $K=1$ (bottom) for the potential $V_{t=1}$ (top) and $V_{t=0}$ (bottom), \new{in the one-dimensional case.}
 	}
	\label{fig:convNres}
\end{figure}

\begin{figure}[t!]
	\centering
    \includegraphics[trim = 0mm 0mm 0mm 0mm, clip,width=0.45\textwidth]{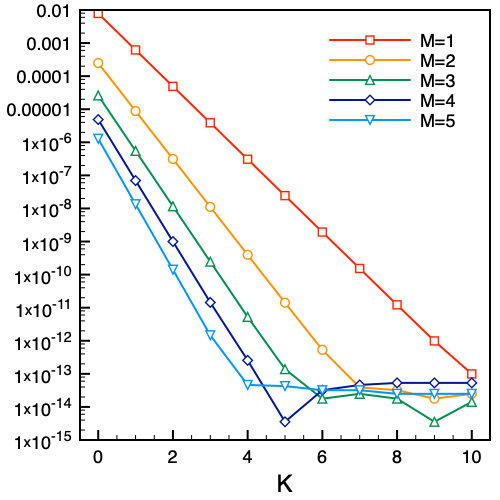}
    \includegraphics[trim = 0mm 0mm 0mm 0mm, clip,width=0.45\textwidth]{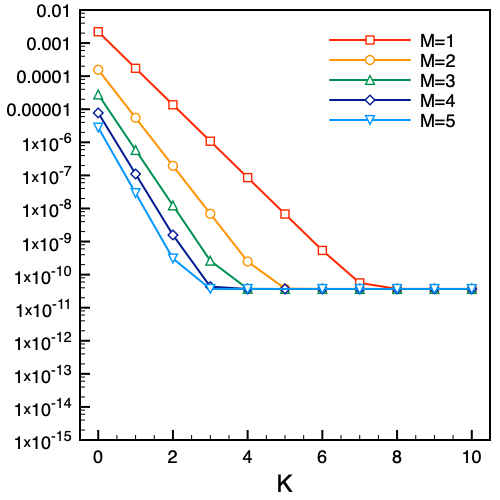}
	\caption{
		The convergence of the eigenvalue error ${\sf err}_{\sf val}$ (left) and the eigenvector error ${\sf err}_{\sf vec}$ (right)  corresponding to the third eigenvalue with respect to $K$ for different values of $M$ and for fixed $N=500$ and with $V_{t=1}$, \new{in the one-dimensional case.}
    }
	\label{fig:convK_n3}
\end{figure}

\subsection{Three-dimensional case}

\blue 

We then performed similar numerical tests for a three-dimensional problem, in order to test cases closer to real applications. Note that, as in the rest of the article, we consider a linear eigenvalue problem. We focus on the lowest eigenvalue of the Schr\"odinger hamiltonian. 
To do so, we used the code Density Functional Theory Toolkit (\texttt{DFTK.jl}) \cite{Herbst2021-yw}, which provides a spectral Fourier discretization for Density Functional Theory eigenvalue problems, and which can easily be adapted to treat linear eigenvalue problems. Since our method is valid for a large range of potentials, including low-regularity ones, we provide numerical simulations for the Coulomb potential, that is given in Fourier space as
\[
    	\widehat V_0 = 10,\qquad
	\widehat V_{\mathbf{k}} = -\frac{10}{|\mathbf{k}|^2}.
\]
The lattice is chosen as a cubic lattice with a lattice constant of 10 Bohr. Since the electronic structure theory uses a different convention to define the approximation space $\XN$, we now switch to a slightly different terminology and refer to the energy cutoff ${\rm EcN}=\frac12\rho_N$, corresponding to the largest eigenvalue/kinetic energy of the operator $-\frac12\Delta$ in the approximation space $\XN$, to fix the values of $N$.

In order to compute the reference (``exact'') solution an energy cutoff ${\rm EcN}_{\rm ref} = 225$ Ha is taken, which corresponds to choosing a reference grid of size $N_{\rm ref} \simeq 21.2$, with 161'235 Fourier coefficients.

We obtain similar results as in the one-dimensional case. Namely, we observe in Figure~\ref{fig:convK_3d} that for small values of $K$, the convergence rate with respect to the energy cutoff ${\rm EcM}$ is the same for the eigenvalue and the eigenvector and increases  when ${\rm EcM}$ increases, as predicted by the theory. For large values of $K$, we observe a saturation of the error, which is due to the error with respect to $N$ dominating, as stated in Theorem~\ref{thm:main}.

In Figure~\ref{fig:convNres_3d}, we show the analogous of Figure~\ref{fig:convNres} for the one-dimensional case, which is quite similar, as we also observe two regimes. For small values of ${\rm EcN},$ most of the error comes from the error in $N$, while for large values of ${\rm EcN},$ the error mainly comes from the error in $M$ and $K$. However, due to the slow convergence with respect to the grid size, we do not observe the convergence rates in ${\rm EcN}$ as precisely as in the one-dimensional case, this being mostly due to the huge size of the reference basis needed to achieve good convergence. 

Moreover, the iterative algorithm used to solve the problem converges very fast. In practice, we do not need to go beyond 7 SCF iterations in all tested cases to reach a $10^{-12}$ increment on the eigenvalue. 

\black

\begin{figure}[t!]
	\centering
    \includegraphics[trim = 0mm 0mm 0mm 0mm, clip,width=0.45\textwidth]{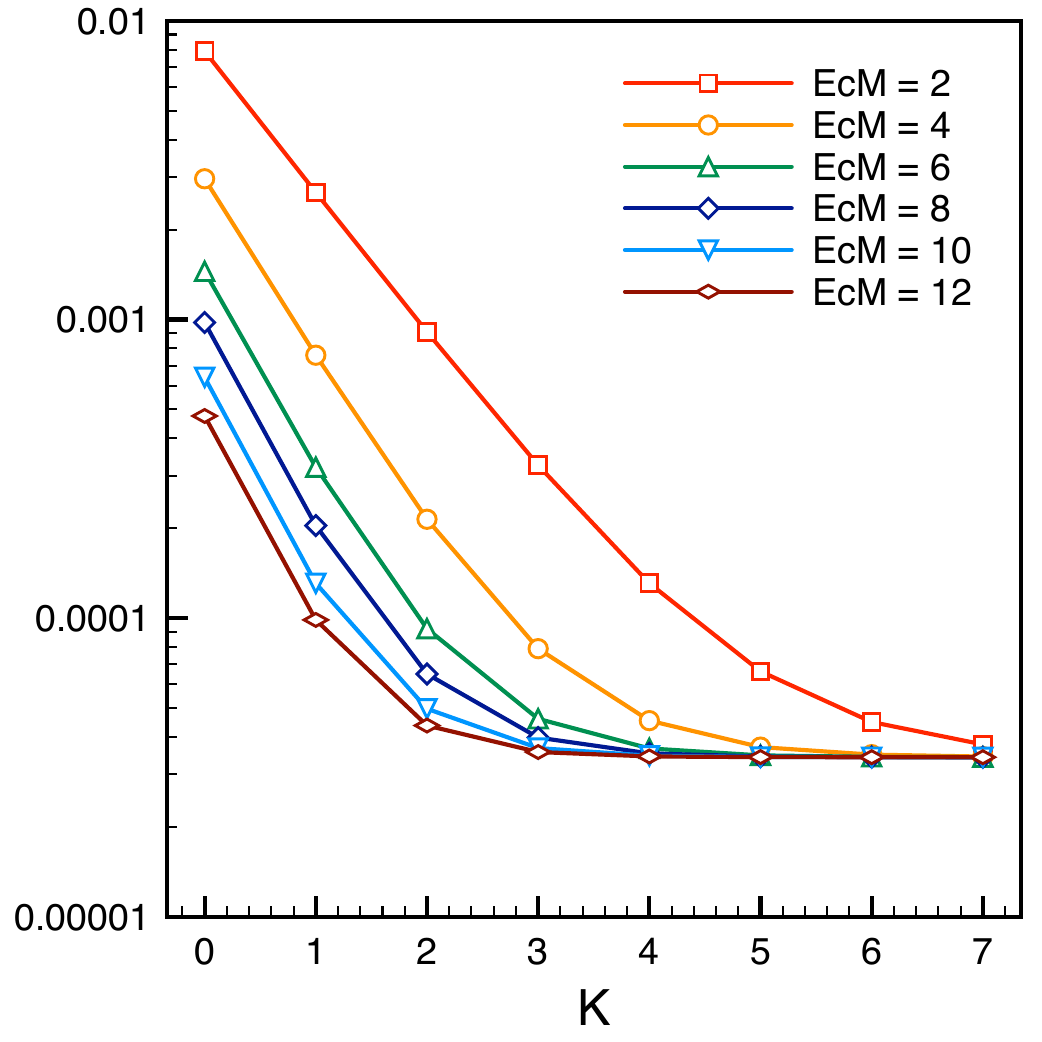}
     \includegraphics[trim = 0mm 0mm 0mm 0mm, clip,width=0.45\textwidth]{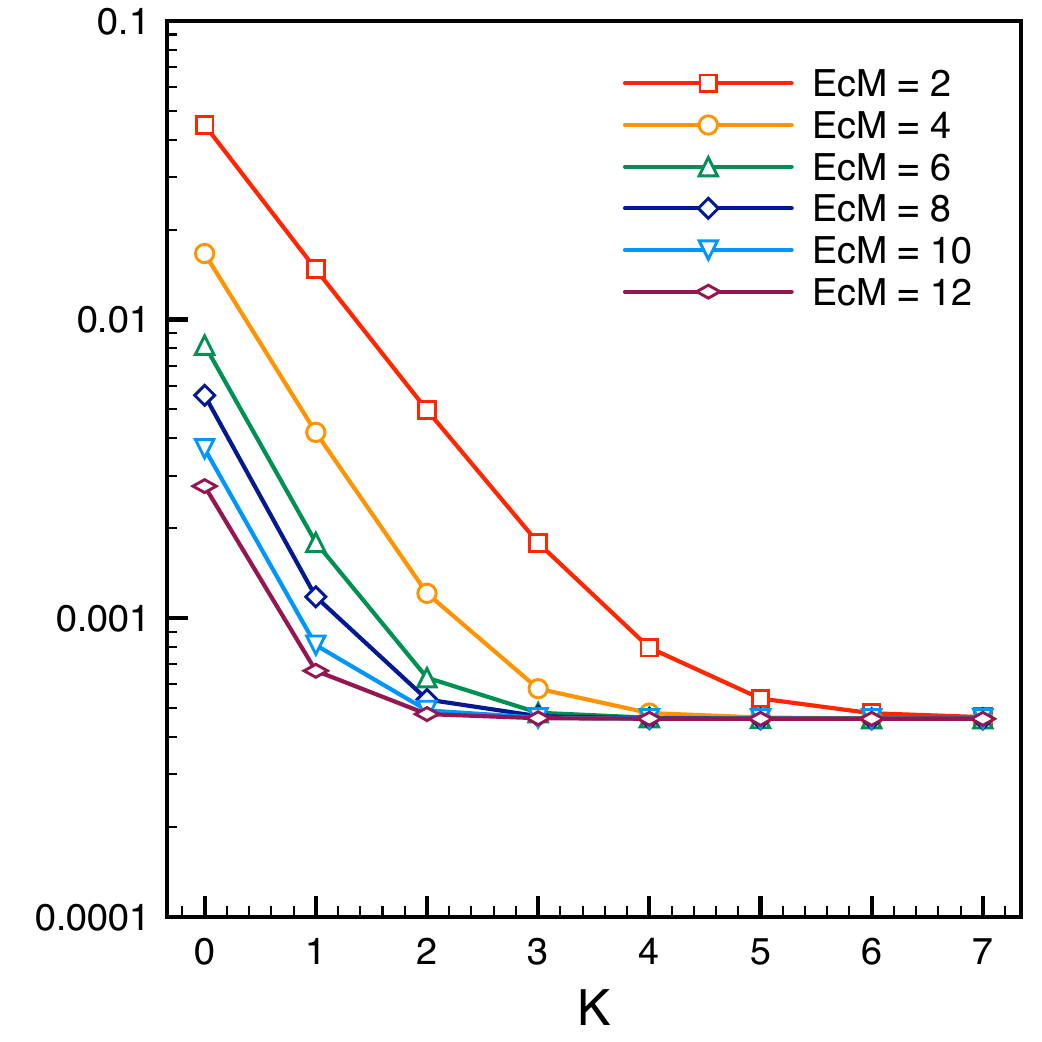}
	\caption{
	{\blue
	The convergence of the eigenvalue error ${\sf err}_{\sf val}$ (left) and the eigenvector error ${\sf err}_{\sf vec}$ (right)  corresponding to the first eigenvalue with respect to $K$ for different values of energy cutoffs ${\rm EcM}$ and for fixed ${\rm EcN} = 155$, for a (three-dimensional) \blue Coulomb potential.
    }}
	\label{fig:convK_3d}
\end{figure}

\begin{figure}[t!]
	\centering
    \includegraphics[trim = 0mm 0mm 0mm 0mm, clip,width=0.45\textwidth]{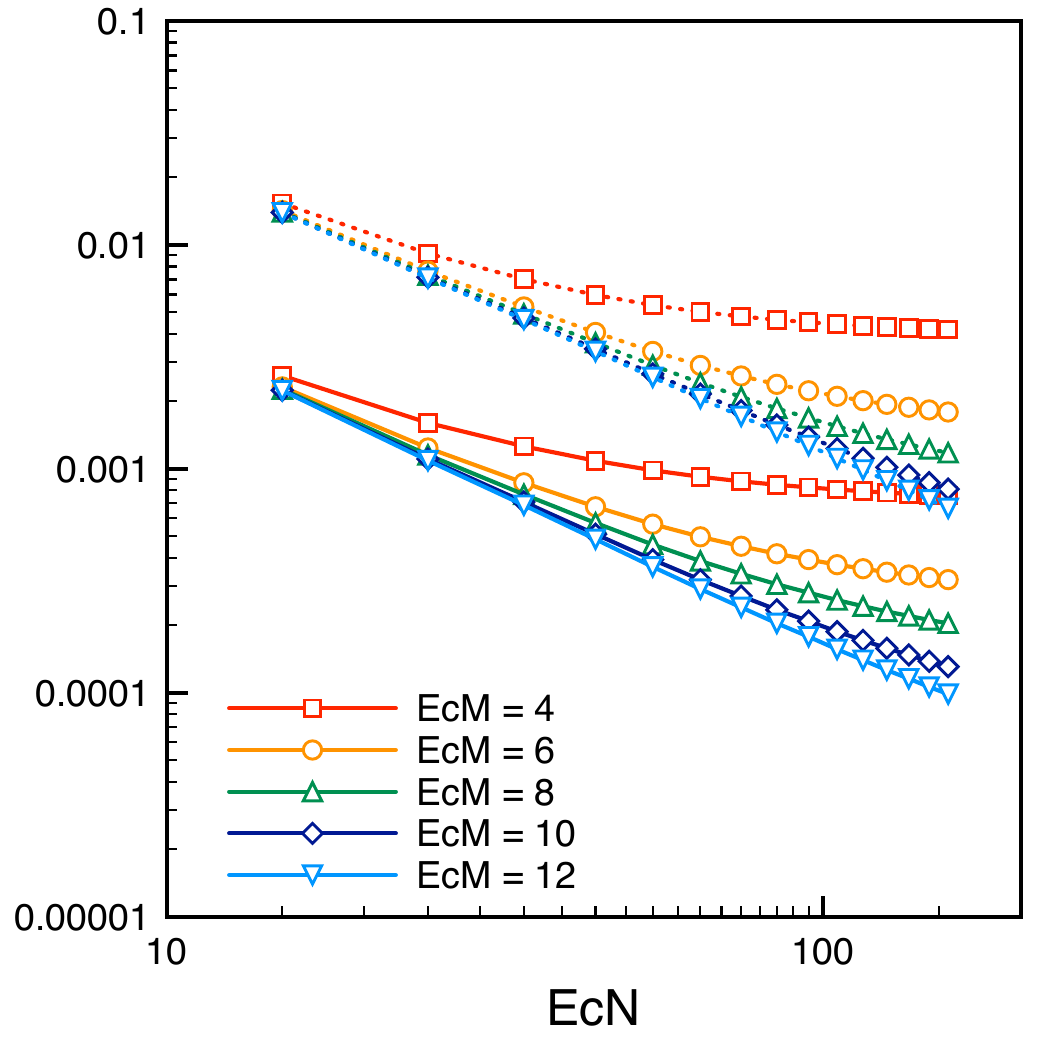}
    \includegraphics[trim = 0mm 0mm 0mm 0mm, clip,width=0.45\textwidth]{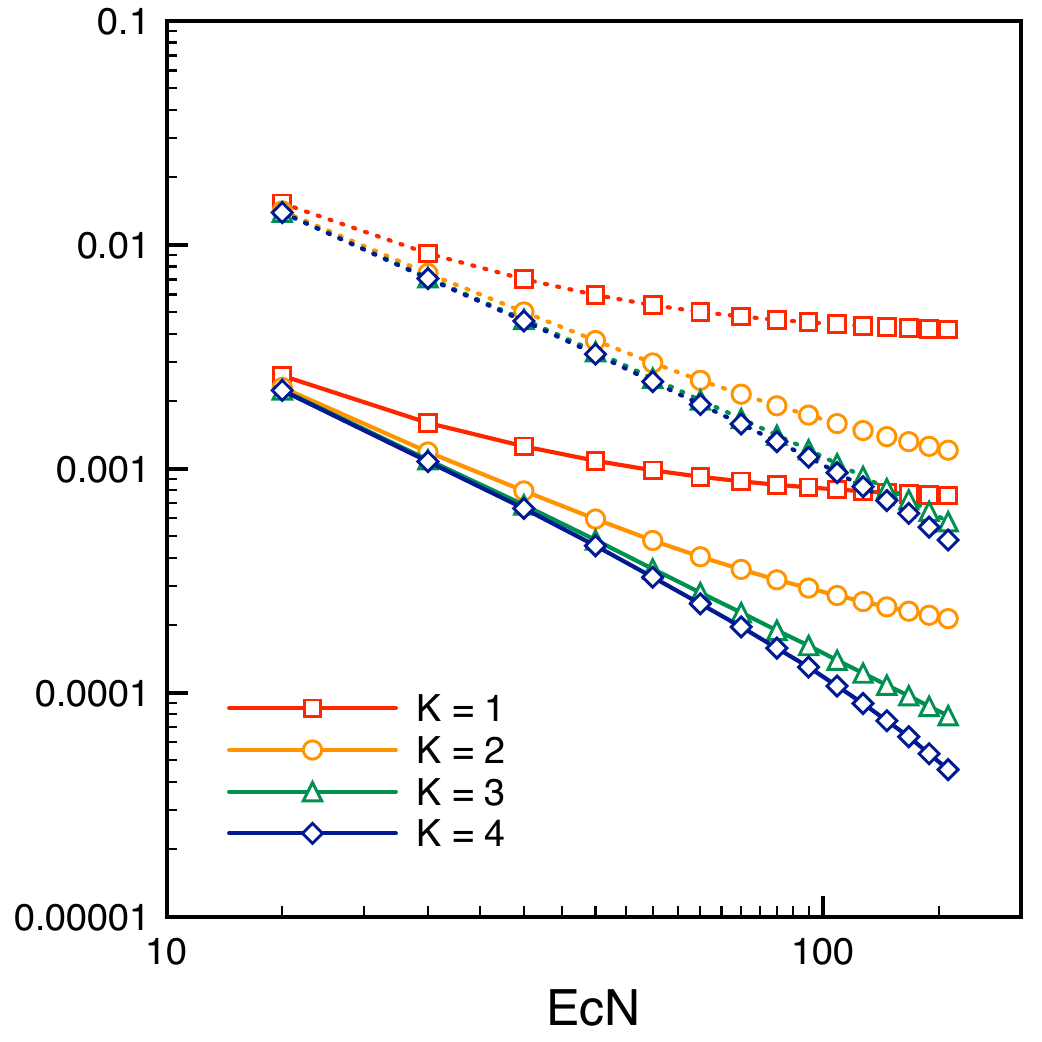}
	\caption{
	{\blue 
 	The convergence of the eigenvalue error ${\sf err}_{\sf val}$ (straight lines) and the eigenvector error  ${\sf err}_{\sf vec}$ (dotted lines) corresponding to the first eigenvalue with respect to the energy cutoff ${\rm EcN}$ for different values of ${\rm EcM}$  (left) with $K=1$ and different values of  $K$ \blue (right) with ${\rm EcM}=4$ for a (three-dimensional)  Coulomb potential.
 	}}
	\label{fig:convNres_3d}
\end{figure}

\section{Conclusion and perspectives}
\label{sec:sec6}

In this paper, we have proposed a new numerical method based on the Feshbach-Schur map in combination with \old{planewave}
\new{the spectral Fourier} discretizations for linear Schr\"odinger eigenvalue problems. 
The method does not rely on the variational principle but reformulates the infinite-dimensional problem as an equivalent problem, non-linear in the spectral parameter, on a finite dimensional grid whose unknowns are the exact eigenvalue and the best-approximation of the exact eigenfunctions on the given grid. 
Such a problem can then be approximated by evaluating the Feshbach-Schur map on a second finer grid. 
The substantial contribution of this paper is an analysis in order to provide error estimates of the proposed method  in all discretization parameters. 

For this, we developed in Section~\ref{sec:pert-est} a version of perturbation theory that relies on the notion of form-boundedness with increased regularity, as stated by Assumption~\ref{as:pot}.

Having established the method and its analysis, its full benefits shall be further analyzed in future. 
At the present stage, it is worth to mention that, for the considered \new{one- and three-dimensional (with a Coulomb potential)} problems, the contraction in the perturbation is rather small and the non-linear iteration converge rapidly. 
Also, in view of more sophisticated non-linear eigenvalue problems, the artificial extra non-linearity does not seem to be much of a burden.

The future developments include the extension of Section~\ref{sec:pert-est} to a more general family of operators, including non-symmetric perturbations of self-adjoint operators as well as extending the numerical method and its analysis to cluster of eigenvalues using a density-matrix based formulation.

\bibliographystyle{abbrv}
\bibliography{biblio}

\appendix

\section{Technical results and proofs} \label{sec:tech-res} 
We present proofs of some technical statements made in the previous sections and some additional  technical results. In what follows, we denote $V_N^\perp := \PNp V\PNp$.
\begin{lem}
\label{lem:H-low-bnd}
Under Assumption~\ref{as:pot}, (i) the norm $\|V\|_{0,\bb} :=\|(-\Delta+\bb)^{-1/2}V(-\Delta+\bb)^{-1/2}\|$ is well-defined for $\alpha >0$ and $\|V\|_{0,\bb} \rightarrow 0$ when $\alpha \rightarrow +\infty$ and (ii) $\Hcal$ is bounded below as 
\begin{align}
      \label{Hlowbnd}
      \Hcal&\ge[1-\|V\|_{0,\bb}](-\Delta+\alpha)- \bb,
\end{align}\end{lem}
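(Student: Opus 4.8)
The plan is to reduce both assertions to a single elementary fact about the Fourier multiplier
\[
    A_\alpha := (-\Delta+\alpha)^{-1/2}(-\Delta+1)^{1/2-r/2},
\]
which acts on $\Lt$ as multiplication by $m_\alpha(\bk) := (|\bk|^2+1)^{1/2-r/2}(|\bk|^2+\alpha)^{-1/2}$ on the Fourier side ($\bk\in\frac{2\pi}{L}\Z^d$): this multiplier is bounded for every $\alpha>0$, with norm tending to $0$ as $\alpha\to\infty$.

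For part (i), I would first note that, since $(-\Delta+1)^{1/2-r/2}$ and $(-\Delta+\alpha)^{-1/2}$ are commuting functions of $-\Delta$, inserting the matching resolvent powers around $V$ gives the identity
\[
    (-\Delta+\alpha)^{-1/2}V(-\Delta+\alpha)^{-1/2}
    = A_\alpha\,\big[(-\Delta+1)^{-1/2+r/2}V(-\Delta+1)^{-1/2+r/2}\big]\,A_\alpha,
\]
so that $\|V\|_{0,\alpha}\le\|A_\alpha\|^2\,\Enorm{V}$, which is finite by Assumption~\ref{as:pot}. To see that $\|A_\alpha\|<\infty$ for every $\alpha>0$, I would just observe that $m_\alpha$ is bounded on $\frac{2\pi}{L}\Z^d$, since it is the restriction of a function that is continuous on $\R^d$ and decays like $|\bk|^{-r}$ at infinity. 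To get $\|A_\alpha\|\to 0$, I would bound $m_\alpha(\bk)\le\alpha^{-\min(r,1)/2}$ for $\alpha\ge 1$, splitting according to whether $r\le 1$ (using $(|\bk|^2+1)^{1/2-r/2}\le(|\bk|^2+\alpha)^{1/2-r/2}$) or $r>1$ (using $(|\bk|^2+1)^{1/2-r/2}\le 1$). This even yields the quantitative bound $\|V\|_{0,\alpha}\le\alpha^{-\min(r,1)}\Enorm{V}$.

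For part (ii), I would carry out the standard KLMN-type quadratic-form estimate. For $u$ in the form domain $\Hs{1}$, set $v:=(-\Delta+\alpha)^{1/2}u$, so that $\langle u,Vu\rangle=\langle v,(-\Delta+\alpha)^{-1/2}V(-\Delta+\alpha)^{-1/2}v\rangle$ and hence $|\langle u,Vu\rangle|\le\|V\|_{0,\alpha}\langle u,(-\Delta+\alpha)u\rangle$. Combining this with $\langle u,-\Delta u\rangle=\langle u,(-\Delta+\alpha)u\rangle-\alpha\|u\|^2$ then gives
\[
    \langle u,\Hcal u\rangle\ge\big(1-\|V\|_{0,\alpha}\big)\langle u,(-\Delta+\alpha)u\rangle-\alpha\|u\|^2,
\]
which is precisely the form inequality $\Hcal\ge[1-\|V\|_{0,\alpha}](-\Delta+\alpha)-\alpha$.

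I do not expect any serious obstacle here: the only spot needing a little care is the uniform-in-$\alpha$ control of $\|A_\alpha\|$, with its elementary case split on whether $r\le 1$ or $r>1$; all the remaining manipulations are routine identities involving functions of $-\Delta$.
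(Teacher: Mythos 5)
Your proof is correct and follows essentially the same route as the paper: part (ii) is an unpacking, at the level of quadratic forms, of the paper's sandwich identity $\Hcal=(-\Delta+\alpha)^{1/2}\big[\one+(-\Delta+\alpha)^{-1/2}V(-\Delta+\alpha)^{-1/2}\big](-\Delta+\alpha)^{1/2}-\alpha$, and part (i), which the paper dispatches as ``obvious,'' is the same Fourier-multiplier factorization $\|V\|_{0,\alpha}\le\|A_\alpha\|^2\Enorm{V}$ with $\|A_\alpha\|\to 0$. Your explicit case split ($r\le 1$ vs.\ $r>1$) giving $\|A_\alpha\|\le\alpha^{-\min(r,1)/2}$ is a small added precision that the paper glosses over.
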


\begin{proof}  
The first statement is obvious and the second one follows from the identity
\begin{align}
    \label{H-repr}
\Hcal=(-\Delta+\bb)^{1/2}[\one+(-\Delta+\bb)^{-1/2}V(-\Delta+\bb)^{-1/2}](-\Delta+\bb)^{1/2}- \bb
\end{align}
and the fact that the expression in square braces is bounded below by $1-\|V\|_{0,\bb}$. \end{proof}

\begin{proof}[Proof of Lemma \ref{lem:Hperp-low-bnd}]
Proceeding as in~\eqref{Hlowbnd},
we obtain for any $\alpha \in \R$ large enough so that $\|V_M^\perp\|_{0,\bb} <1$ 
\begin{align*}
      -\Delta + V_M^\perp \ge & [1-\|V_M^\perp\|_{0,\bb}](-\Delta+\alpha)- \bb. 
\end{align*}
Using that $-\Delta\ge (2\pi M/L)^2=\laM$ on $\Ran \PM^\perp$ and the estimate
\begin{align*}
       \|V_M^\perp\|_{0,\bb}
    & \le 
    \|(1-\Delta)^{1/2} (-\Delta+\alpha)^{1/2} \PMp\|^2
    \|(1-\Delta)^{-1/2} \PMp  V \PMp (1-\Delta)^{-1/2}\| \\
    & \le 
    \frac{\laM + 1}{\laM + \alpha}\laM^{-{r}} \, \Enorm{V},
\end{align*}
we deduce
\begin{align*}
    -\Delta + V_M^\perp &\ge \left( 1- \frac{\laM + 1}{\laM + \alpha}\laM^{-{r}} \, \Enorm{V} \right) (\laM + \alpha) - \alpha, \\
     &\ge \laM + \alpha - (\laM + 1) \laM^{-{r}} \, \Enorm{V} 
     - \alpha, \\
     &\ge \laM - (\laM + 1) \laM^{-{r}} \, \Enorm{V},
\end{align*}
from which we obtain the result.
\end{proof}

Now, we introduce the Sobolev spaces $\Hs{s}$ of periodic real  functions resp. distributions which can be characterized in a simple way using Fourier series: for $s\in\R$, we have
\begin{align}
	\Hs{s} := &\left\{ v  = \sum_{\bk \in \cR^\ast} \widehat v_\bk e_\bk \; \bigg| \; \forall \; \bk\in\cR^\ast:\quad \widehat v_{-\bk} = \overline{\widehat v_\bk}, \quad
	\|v\|_{\Hss{s}} < \infty \right\},
	\label{eq:Hss}
\end{align}
where the norm $\|v\|_{\Hss{s}}$ is given by the $\Hs{s}$ inner product is defined by
\[
	\forall u,v\in \Hs{s}, \quad    (u,v)_{\Hss{s}} := \sum_{\bk \in \cR^\ast} (1+|\bk|^2)^s \overline{u_\bk} v_\bk.
\]
\begin{lem}
\label{lem:Hs-bnd}
If $V\in \Hs{s}$ for some $s\in\R$, 
then $V$ satisfies Assumption~\ref{as:pot} for all $r\ge 0$ satisfying $s\ge r-1$ and $s> -2(1-r)+d/2$ resp.
\begin{equation}
    \label{eq:SobEmb}
    r \le s+1\qquad\mbox{and}\qquad r<1+\frac{s}{2}-\frac{d}{4},
\end{equation}
where $d$ is the spatial dimension. Moreover,
\[
        \Enorm{V}  \le C_{r,s} \|V\|_{\Hs{s}}.\]\end{lem}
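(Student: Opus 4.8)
The goal is to bound
$\Enorm{V} = \|(-\Delta+1)^{-1/2+r/2} V (-\Delta+1)^{-1/2+r/2}\|$ by $\|V\|_{\Hs s}$ under the stated constraints on $r$ and $s$. The natural approach is to pass to Fourier series and estimate the operator norm on $\Lt$ via a Schur test. Writing $V = \sum_{\bm k} \widehat V_{\bm k} e_{\bm k}$, the operator $(-\Delta+1)^{-1/2+r/2} V (-\Delta+1)^{-1/2+r/2}$ has matrix entries, with respect to the planewave basis $\{e_{\bm k}\}$, given by
\[
    A_{\bm k, \bm l} = (1+|\bm k|^2)^{-1/2+r/2}\,\widehat V_{\bm k - \bm l}\,(1+|\bm l|^2)^{-1/2+r/2}.
\]
First I would recall the standard Schur test: $\|A\| \le \big(\sup_{\bm k}\sum_{\bm l} |A_{\bm k,\bm l}|\big)^{1/2}\big(\sup_{\bm l}\sum_{\bm k}|A_{\bm k,\bm l}|\big)^{1/2}$; by symmetry of the problem it suffices to bound $\sup_{\bm k}\sum_{\bm l}|A_{\bm k,\bm l}|$.

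\textbf{Key steps.} Fix $\bm k$ and consider $\sum_{\bm l} (1+|\bm k|^2)^{-1/2+r/2}(1+|\bm l|^2)^{-1/2+r/2}|\widehat V_{\bm k-\bm l}|$. Substituting $\bm m = \bm k - \bm l$ and inserting the weight $(1+|\bm m|^2)^{s/2}(1+|\bm m|^2)^{-s/2}$ to bring in the $\Hs s$-norm, I would apply Cauchy--Schwarz in $\bm m$:
\[
    \sum_{\bm m} (1+|\bm m|^2)^{-s/2}\,(1+|\bm k|^2)^{-1/2+r/2}(1+|\bm k-\bm m|^2)^{-1/2+r/2}\,\big[(1+|\bm m|^2)^{s/2}|\widehat V_{\bm m}|\big]
    \le \|V\|_{\Hs s}\,\Big(\sum_{\bm m} w_{\bm k}(\bm m)^2\Big)^{1/2},
\]
where $w_{\bm k}(\bm m) = (1+|\bm m|^2)^{-s/2}(1+|\bm k|^2)^{-1/2+r/2}(1+|\bm k-\bm m|^2)^{-1/2+r/2}$. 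The crux is then to show $\sup_{\bm k}\sum_{\bm m} w_{\bm k}(\bm m)^2 < \infty$. For this I would use the elementary Peetre-type inequality $(1+|\bm k|^2) \le 2(1+|\bm m|^2)(1+|\bm k-\bm m|^2)$, which for the exponent $-1/2+r/2$ (note $r$ will be taken so that $-1/2+r/2 < 1/2$, i.e. the exponent is controlled) lets me trade the $\bm k$-weight against the two others. The condition $r \le s+1$ is exactly what is needed so that after this redistribution the power of $(1+|\bm k - \bm m|^2)$ appearing is nonpositive (bounded by $1$), leaving a sum of the form $\sum_{\bm m}(1+|\bm m|^2)^{-s + (r-1)}$ up to constants; this converges iff $2(s-(r-1)) > d$, i.e. $r < 1 + \tfrac s2 - \tfrac d4$, which is the second stated constraint. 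Collecting constants gives $\Enorm V \le C_{r,s}\|V\|_{\Hs s}$.

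\textbf{Main obstacle.} The only delicate point is the bookkeeping in redistributing the three weights: one must check that the Peetre inequality can be applied with the correct exponents and signs so that \emph{both} the supremum over $\bm k$ is finite \emph{and} the residual sum over $\bm m$ is a convergent Bessel-type series, and that this works under precisely the two inequalities in \eqref{eq:SobEmb} rather than something stronger. A careful case split may be needed depending on the sign of $-1/2+r/2$ and of $s$ (e.g. when $s < 0$ the weight $(1+|\bm m|^2)^{-s/2}$ is a growth factor and must be absorbed entirely by the decay coming from $\widehat V$). Everything else — the Schur test and the Cauchy--Schwarz step — is routine. I would organize the proof as: (1) Fourier/Schur-test reduction; (2) Cauchy--Schwarz to extract $\|V\|_{\Hs s}$; (3) the weight estimate via Peetre's inequality, where the two conditions in \eqref{eq:SobEmb} enter.
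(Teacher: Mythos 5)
Your route is genuinely different from the paper's: the paper simply reads $\Enorm{V}$ as the operator norm of multiplication by $V$ from $\Hs{1-r}$ to $\Hs{r-1}$ and cites the Sobolev multiplication theorem (Grisvard), with the two conditions in \eqref{eq:SobEmb} being exactly the hypotheses $s\ge r-1$ and $s+(1-r)>(r-1)+d/2$ of that theorem. You instead try to reprove the needed bilinear estimate by hand in Fourier space via a Schur test. That is a legitimate idea, but as laid out it contains a genuine gap and does not reach the range claimed in \eqref{eq:SobEmb}.

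The problem is quantitative and is hidden by an algebra slip. After the unweighted Schur test and Cauchy--Schwarz with the full weight $(1+|m|^2)^{s/2}$ placed on $\widehat V$, you need $\sup_{k}\sum_{m}w_k(m)^2<\infty$; for $r\le1$, using $(1+|k|^2)(1+|k-m|^2)\ge\tfrac12(1+|m|^2)$, this sum is indeed comparable to $\sum_{m}(1+|m|^2)^{-(s+1-r)}$, and your stated criterion $2(s-(r-1))>d$ is the correct convergence condition for it --- but it is equivalent to $r<1+s-\tfrac d2$, \emph{not} to $r<1+\tfrac s2-\tfrac d4$ (the two coincide only when $s=d/2$). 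Whenever $s<d/2$, which is precisely the low-regularity regime the lemma targets, your condition is strictly stronger than \eqref{eq:SobEmb} and the weight sum actually diverges inside the claimed range: for $d=3$, $s=3/4$, $r=1/2$ (admissible in \eqref{eq:SobEmb}) one has $\sum_m w_0(m)^2=\sum_m(1+|m|^2)^{-5/4}=\infty$; and for the paper's own rough test case ($d=1$, $V\in\Hs{-1/2-\varepsilon}$) your condition $r<1+s-\tfrac12$ admits no $r>0$ at all, while the lemma claims every $r<1/2$. The loss is structural: an unweighted row-sum Schur test combined with Cauchy--Schwarz demands $\ell^1$-type control of the rows and cannot exploit that $\widehat V$ is only $\ell^2$ against the weight $(1+|m|^2)^{s/2}$. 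To recover the full range along your lines you would need a weighted Schur test, or a splitting of the frequency sum into the regimes $|k-l|\gtrsim\max(|k|,|l|)$ and $|k-l|\ll\max(|k|,|l|)$ with only part of the weight transferred to $\widehat V$ --- in effect reproving the multiplication theorem the paper cites. (Also note that your first condition $r\le s+1$ never actually enters your computation; in the cited theorem it is the hypothesis $s\ge r-1$, and that theorem additionally requires $1-r\ge r-1$, i.e.\ $r\le1$, so the $r>1$ part of your case analysis cannot be rescued by Peetre redistribution either.)
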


\begin{proof}
 From \cite[Theorem 1.4.4.2]{Grisvard1985-ao},  for any $s_1,s_2 \ge t,$ such that $s_1+s_2 > t +d/2$, there exists $C_{s_1,s_2,t}>0$ such that for all $u \in \Hs{s_1}, v \in \Hs{s_2},$ then $uv \in \Hs{t}$ and 
    \[
        \| uv \|_{\Hs{t}} \le C_{s_1,s_2,t}  \| u \|_{\Hs{s_1}}  \| v \|_{\Hs{s_2}}.
    \]
    Hence if $V\in \Hs{s}$, $s > -2(1-r) + d/2,$ and $s\ge r-1$, then there exists $ C_{r,s}$ such that
    \[
        \Enorm{V} = \sup_{\psi \in \Hs{1-r}}
        \frac{\|V\psi\|_{\Hs{-1+r}}}{\|\psi\|_{\Hs{1-r}}} \le C_{r,s} \|V\|_{\Hs{s}},
    \]
which implies the estimate in the lemma. \end{proof}

\begin{proof}[Proof of Proposition~\ref{prop:UN-well-defined}]
 For each $\lax$ such that $\lax <  \ka_M$,
 $\UM(\lax)$ can be written as the product of five bounded operators: $-\PM (-\Delta + 1)^{1/2-r/2}$, $(-\Delta + 1)^{-1/2+r/2} V (-\Delta+1)^{-1/2+r/2}$, 
 $(-\Delta + 1)^{1/2-r/2} \PMp (\cH_M^\perp-\lax)^{-1} \PMp (-\Delta + 1)^{1/2-r/2}$, $(-\Delta + 1)^{-1/2+r/2} V (-\Delta+1)^{-1/2+r/2}$, $(-\Delta + 1)^{1/2-r/2} \PM$.
 Therefore, $\UM(\lax)$ is a well-defined operator.
\end{proof}

\end{document}